\documentclass{amsart}

\usepackage{amssymb,amsfonts}
\usepackage{amsmath}
\usepackage[all,arc]{xy}
\usepackage{enumerate}
\usepackage{mathrsfs}
\usepackage{hyperref}
\usepackage{graphicx}
\usepackage{mathtools}
\usepackage{natbib}
\graphicspath{ {./Images/} }
\usepackage{pgfplots}
\pgfplotsset{compat=1.18}

\newtheorem{theorem}{Theorem}[section]
\newtheorem{corollary}[theorem]{Corollary}
\newtheorem{proposition}[theorem]{Proposition}
\newtheorem{lemma}[theorem]{Lemma}
\newtheorem{conjecture}[theorem]{Conjecture}

\theoremstyle{definition}
\newtheorem{definition}[theorem]{Definition}

\newtheorem{example}[theorem]{Example}

\theoremstyle{remark}
\newtheorem{remark}[theorem]{Remark}

\newcommand{\R}{\mathbf{R}}

\newcommand{\C}{\mathbf C}

\makeatletter
\let\c@equation\c@theorem
\makeatother
\numberwithin{equation}{section}

\usepackage{tikz-cd}

\newcommand{\cL}{\mathcal L}

\newcommand{\cB}{\mathcal B}

\newcommand{\cF}{\mathcal F}
\newcommand{\cG}{\mathcal G}

\newcommand{\cN}{\mathcal N}

\newcommand{\cU}{\mathcal U}
\newcommand{\cV}{\mathcal V}

\newcommand{\im}{\mathrm {Im}}

\newcommand{\dist}{\mathrm {dist }}

\newcommand{\gr}{\operatorname{graph}}
\newcommand{\cone}{\operatorname{Cone}}

\begin{document}
\author{Guoran Ye}
\title{Special Lagrangians with Cylindrical Tangent Cones}
\begin{abstract}
    We construct new examples of special Lagrangian submanifolds $Y\subset \C^{n+1},n\geq 3$ in a neighborhood of the origin, with an isolated singularity, but with cylindrical tangent cone $C\times\R$. Moreover, $Y\setminus\{0\}$ is connected while $(C\setminus\{0\})\times\R$ is not. Such examples exist, for example, when $C$ is a pair of transverse planes.
\end{abstract}

\maketitle

\section{Introduction}

A special Lagrangian submanifold is a Lagrangian submanifold in a Calabi-Yau manifold with constant Lagrangian angle. As calibrated submanifolds, special Lagrangians are not only minimal but also volume-minimizing in their homology class. Lying at the crossroads of symplectic geometry, complex geometry, and minimal surface theory, they have been central objects of study for decades. A key goal is to understand their moduli space. This naturally leads to the analysis of singularities, which arise on the boundary of this moduli space as limits of smooth special Lagrangians. Suppose $N$ has a singularity at the origin $0$. For any sequence $\rho_i\searrow 0$, the rescaled manifolds $\rho_i^{-1}N$ converge subsequentially to a special Lagrangian cone $C$. Such a cone $C$ is called a tangent cone of $N$ at $0$ and singularities are often classified according to their tangent cone(s).

The analysis of a tangent cone $C$ proceeds along two primary aspects. The first is existence: for instance, the existence of a special Lagrangian $L$ with tangent cone $C$ at a singularity, or the existence of one asymptotic to $C$ at infinity. The second is uniqueness: this includes the uniqueness of the tangent cone $C$ itself (i.e., independence of the rescaling sequence $\rho_i\searrow 0$), and the uniqueness of special Lagrangians asymptotic to a given cone $C$. Much of the existing literature focuses on cones with smooth links, including a series of works by Joyce and others \cite{joyce2002special}, \cite{joyce2003specialV}, \cite{imagi2015uniqueness}, \cite{marshal2002deformations}, \cite{haskins2004special}, and more.

Less understood are tangent cones with non-isolated singularities. Among these, the simplest examples are cylindrical cones of the form $C\times\R\subset \C^{n+1}$ where $C\subset\C^n$ is a cone with a smooth link. Recently, Collins and Li in \cite{collinsli2023uniqueness} proved the uniqueness of certain cylindrical tangent cones for special Lagrangians.

Cylindrical tangent cones are also studied in neighboring areas. In the context of Lagrangian mean curvature flow, \cite{li2024singularity} showed the uniqueness of certain cylindrical tangent flows. For minimal hypersurfaces, a series of works including \cite{simon1994uniqueness}, \cite{szekelyhidi2020uniqueness}, \cite{firester2025uniqueness} proved uniqueness of certain cylindrical tangent cones, while \cite{szekelyhidi2021minimal} showed the existence of nontrivial examples with cylindrical tangent cones. Similarly, in the study of Calabi-Yau metrics, various existence and uniqueness results for cylindrical models have been obtained \cite{conlon2017new}, \cite{li2019new}, \cite{szekelyhidi2019degenerations}, \cite{chiu2022nonuniqueness}, \cite{szekelyhidi2020uniquenessCY}.

\subsection{Main result}

The main result of this paper is the construction of new, nontrivial examples of special Lagrangian submanifolds possessing an isolated singularity with a cylindrical tangent cone. Prior to this work, the only known examples in this class were those constructed by Joyce \cite{joyce2002u} in $\C^3$ under a $U(1)$-invariance assumption. Joyce also constructed explicit examples whose tangent cone \textit{at infinity} is a pair of planes intersecting along a line \cite{joyce2000evolvequadratics}.

Throughout the paper, we let $C=\cone(\Sigma)\subset \C^n,n\geq 3$ be a special Lagrangian cone with smooth link $\Sigma$.

\begin{theorem}\label{thm: main}
    Suppose there exists a smooth, exact, connected special Lagrangian submanifold $L$ asymptotic to $C$ with rate $\mu< 0$ and $0\notin L$. Then there exists a special Lagrangian submanifold $Y\subset\C^{n+1}$ in a neighborhood of the origin such that $Y$ is smooth away from $0$ and has unique tangent cone $C\times\R$ at $0$. Moreover, $Y\setminus\{0\}$ is connected.
\end{theorem}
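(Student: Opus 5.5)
This follows the strategy Sz\'ekelyhidi used for minimal hypersurfaces with cylindrical tangent cones. The idea is to glue rescaled copies of $L$ along the $\R$-direction, with the scale shrinking to zero as we approach $0$.

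\textbf{Approximate solution.} Write $\C^{n+1}=\C^n\times\C$ with coordinates $(z,x+iy)$. For a positive function $\lambda(x)$ with $\lambda(x)\to 0$ as $x\to 0$, for instance $\lambda(x)=|x|^{1+\alpha}$ or a suitably tuned power, consider
\[
X_0=\{(\lambda(x)\,p,\;x+i\,\phi(x,p)) : p\in L,\ x\in(-\epsilon,\epsilon)\setminus\{0\}\}.
\]
Here $\phi$ is a correction in the $y$-direction chosen so that $X_0$ is Lagrangian. Exactness of $L$ matters at this step. If $L$ has Liouville primitive $f_L$ with $d f_L=\lambda_{\mathrm{Liou}}|_L$, a direct computation shows that varying the scale with $x$ can be compensated by setting $y=-\lambda\lambda' f_L(p)$ (up to sign conventions), which makes $X_0$ exactly Lagrangian. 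The rate $\mu<0$ gives $f_L=O(r^{\mu+2})$ with $\mu+2<2$, and together with $0\notin L$ this controls the correction. Near $x=0$ the slices $\lambda(x)L$ converge to $C$, so $X_0\cup\{0\}$ has tangent cone $C\times\R$. The piece with $x>0$ is connected to the piece with $x<0$ through the origin. However, $X_0$ itself is not connected, since $x=0$ is removed. Connectedness of $Y\setminus\{0\}$ must therefore come from a different arrangement, which I return to below.

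\textbf{Perturbation.} Compute the Lagrangian angle $\theta_0$ of $X_0$. Its error is controlled by $\lambda'^2$ and $\lambda\lambda''$ in suitable weighted norms, and these are small relative to the natural scale $\lambda(x)$ on each slice. We seek $Y$ as the graph of $d u$ for a function $u$ over $X_0$ in a Weinstein neighbourhood, solving
\[
\theta(\mathrm{graph}\, du)=\mathrm{const}.
\]
The linearisation is, to leading order, the Laplacian on $X_0$. This is approximately $\lambda^{-2}\Delta_L+\partial_x^2$ on the slice at height $x$, with the Laplacian of $C\times\R$ far out. Set up weighted H\"older spaces with weights $\lambda(x)$ and the radial distance to $C$. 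Invert the linear operator modulo a finite-dimensional obstruction, coming from decaying/bounded harmonic functions on $L$ (constants, and the indicial roots of $C$ in $(\mu,0]$). To do this, prove a uniform estimate via a blow-up/contradiction argument: blow-up limits are harmonic functions on $L\times\R$, $C\times\R$, or $\R^{n+1}$ with growth excluded by the weights. Then apply a contraction mapping to absorb the nonlinearity. The perturbation is small in the rescaled sense, so the tangent cone remains $C\times\R$, and it is unique because every rescaling converges to it with a rate.

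\textbf{Main obstacle and connectedness.} The hardest step is the linear theory. One must find weights such that the $x$-dependence of the scale does not produce non-invertible terms, and handle the kernel from $\R$-translation, scaling, and constants on $L$. I would first try choosing $\lambda(x)=|x|^{1+\alpha}$ with small $\alpha>0$, working separately on dyadic $x$-annuli and patching. The rapidly shrinking scale makes the interaction between annuli small, which allows the obstructions to be absorbed by adjusting parameters (translations of $L$ in $\C^n$ and the constant angle) that vary slowly in $x$.

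For connectedness, the cleanest choice is to avoid splitting at $x=0$. Instead, take a single end: use slices $\lambda(x)L$ only for $x>0$, with $Y$ near $x\le 0$ given by the cone $C\times\R$ perturbed. This would not be smooth, however. So instead take $L$ for $x<0$ and its image under $p\mapsto -p$, which is still special Lagrangian when $n$ is even; in general use $e^{i\pi/n}$-type rotations. Alternatively, show directly that each slice $\lambda(x)L$ is connected because $L$ is, so $Y\setminus\{0\}$ restricted to $x>0$ is connected. Then the two sides $x>0$ and $x<0$ are joined through the region far from the axis, $|z|\gg\lambda(x)$, where both sides are graphs over $(C\setminus 0)\times\R$ near $x=0$. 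Indeed, for fixed $z\neq 0$ and $x\to 0$, the sheet $\lambda(x)L$ near $z$ is close to $C$, and it passes smoothly across $x=0$ as $\lambda\to0$. This is the mechanism making $Y\setminus\{0\}$ connected even though $(C\setminus\{0\})\times\R$ is not: each component of $C\setminus\{0\}$ is joined through the connected $L$ in every slice $x\neq0$.
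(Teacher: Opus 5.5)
\textbf{The gap is in the approximate solution.} Your Liouville-corrected family of slices $\lambda(u)L$ is exactly the paper's $X_2$, and it is a good model near the axis. But you use it everywhere, and it is not an approximate solution away from the axis.

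The primitive $f_L=\beta_L$ does not decay, and its limits are the crux. By Lemma~\ref{lem: beta_L near infty} it tends to a constant $c_{\infty,m}$ on the end asymptotic to $\cone(\Sigma_m)$. These constants are not all equal: $\beta_L$ is harmonic on the special Lagrangian $L$, so equal limits would make it constant by the maximum principle, forcing the position vector to be tangent to $L$, i.e.\ $L$ a cone.

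Hence for $r\gg\lambda(u)$ your $X_0$ is $\gr_{C\times\R}(dG)$ with $G=-\lambda(u)^2c_{\infty,m}+\lambda(u)^2g(\lambda(u)^{-1}z)$, and
$\theta_{X_0}\approx\Delta_{C\times\R}G\approx-(\lambda^2)''c_{\infty,m}$. For $\lambda=|u|^a$ this equals $-2a(2a-1)|u|^{2a-2}c_{\infty,m}$. It is of order $\lambda'^2+\lambda\lambda''$, as you say, but it is small only relative to the slice scale near the axis. At $r\sim|u|\sim\rho$ it is as large as the Laplacian of the deformation itself.

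No choice of weights absorbs this error. Smallness of $\theta$ in $C^{0,\beta}_{\delta-2,\tau-2}$ at $r\sim\rho$ needs $\delta<2a$. Smallness of the correction near the axis, where $r\sim\rho^a$, needs $\delta>2a-(a-1)\tau$, hence $\delta>2a$ for $\tau\in(2-n,0)$, the range in which the model Laplacians are invertible. For $\tau>0$ the bounded harmonic functions on $L$, e.g.\ $\beta_L$, enter the kernel of the $L\times\R$ model, and you give no way to handle them.

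Slowly varying parameters do not help either. For slices $\lambda(u)L+p(u)$ the far-out error is still $-(\lambda^2)''c_{\infty,m}$ up to terms of degree one in $z$. No $\lambda=o(|u|)$ makes $(\lambda^2)''$ negligible against $\lambda^2/u^2$.

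\textbf{The missing idea is to harmonize this forced leading term.} The paper takes $a\ge 2$ an integer and, on each component, the harmonic polynomial $\varphi_a=-\bigl(u^{2a}+\sum_{j=1}^a a_ju^{2a-2j}r^{2j}\bigr)c_{\infty,m}$ on $C\times\R$. This has the same leading term as $G$ along the axis. The paper then:
\begin{enumerate}
\item uses $\gr_{C\times\R}(d\varphi_a)$ for $r\ge 2|u|^b$ and $X_2$ for $r\le|u|^b$, with $1<b<a$;
\item interpolates the potentials, $\chi G+(1-\chi)\varphi_a$, rather than the $1$-forms, so the result stays Lagrangian.
\end{enumerate}
This is where exactness of $L$, the normalization $g\to 0$ at infinity and the matching of the constants $c_{\infty,m}$ enter. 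Since $G-\varphi_a=u^{2a}g(u^{-a}z)+O(u^{2a-2}r^2)$ in the gluing region, the angle becomes $\rho^{2a}r^{-2}\Psi^p(A^{-1})$ everywhere. That is what allows $\delta$ slightly above $2a$ and $\tau$ slightly below $0$.

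Integrality of $a$ matters. For your $\lambda=|x|^{1+\alpha}$ with small $\alpha>0$, the degree $2+2\alpha$ is not an integer. A harmonic function of $(r,u)$ of non-integer degree that is regular along one half of the axis is singular along the other, so no smooth harmonization exists.

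Your final connectedness argument is fine once the slice $(C\setminus\{0\})\times\{0\}$ is included in $X$: each slice is connected because $L$ is, and both sides of $u=0$ attach along that slice.
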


\begin{remark}
    The condition $0\notin L$ is not a fundamental restriction. It appears solely in proposition \ref{prop: weight is graphical} to guarantee a clean inclusion of weighted spaces. It is possible that the condition can be removed by a modification of the weighted space definitions. On the other hand, we are unaware of any smooth, exact, special Lagrangian submanifold $L$ asymptotic to a cone with rate $\mu< 0$ that contains $0$ but is not the cone itself. It is possible that $0\notin L$ is already implied by the other assumptions.
\end{remark}

\begin{remark}
    While the minimal hypersurfaces with cylindrical tangent cones obtained by Székelyhidi \cite{szekelyhidi2021minimal} required a separate proof of the area-minimizing property, the $Y$ constructed here is automatically area-minimizing by the calibrated nature of special Lagrangians.
\end{remark}

The hypothesis of Theorem \ref{thm: main} is satisfied in some well-known cases. For a pair of transverse planes, Lawlor \cite{lawlor1989angle} constructed such an asymptotic smoothing $L$ (Lawlor necks). Haskins \cite{haskins2004special} proved the existence of analogous smoothings for pairs of cones of the form $\operatorname{Cone}(\Sigma)\cup \mathrm{Cone}(e^{i\pi/n}\Sigma)$. These existence results lead to the following corollary.

\begin{corollary}
    Let $C$ be either:
    \begin{enumerate}
        \item A pair of transverse planes $P_1\cup P_2$ with characterizing angle $|\theta|=\pi$, or
        \item A pair of cones $\operatorname{Cone}(\Sigma)\cup \mathrm{Cone}(e^{i\pi/n}\Sigma)$.
    \end{enumerate}
    Then, there exists a special Lagrangian submanifold $Y\subset\C^{n+1}$ in a neighborhood of the origin with an isolated singularity at the origin, where the unique tangent cone is $C\times\R$. Moreover, $Y\setminus\{0\}$ is connected.
\end{corollary}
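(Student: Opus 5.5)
The corollary follows from Theorem \ref{thm: main} once its hypothesis is checked in each case. We need a smooth, exact, connected special Lagrangian $L\subset\C^n$ asymptotic to $C$ with rate $\mu<0$ and $0\notin L$. The plan is to take $L$ to be a Lawlor neck in case (1) and a Haskins smoothing in case (2), and then verify the four properties (exactness, connectedness, negative decay rate, avoidance of the origin) one at a time. The conclusion of Theorem \ref{thm: main} is then exactly the statement of the corollary.

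For case (1), recall Lawlor's construction \cite{lawlor1989angle}. Given transverse planes $P_1,P_2$ whose characterizing angles sum to $\pi$ (so $P_1\cup P_2$ is special Lagrangian), there is a one-parameter family of necks
\[
L_a=\{x(s)\,\omega : \omega\in S^{n-1}\subset\R^n,\ s\in\R\},
\]
where each $x_j(s)=z_j(s)\in\C$ is given by explicit integrals. Each $L_a$ is diffeomorphic to $S^{n-1}\times\R$, hence connected for $n\geq 3$. Also $H^1(L_a)=0$ for $n\geq3$, so every closed $1$-form on $L_a$ is exact; in particular $L_a$ is exact. The explicit integrals give convergence to $P_1$ and $P_2$ at rate $O(r^{2-n})$, so $\mu=2-n<0$. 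Finally, $|x(s)|$ has a positive minimum (the neck size $a$), so $0\notin L_a$.

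For case (2), use Haskins' smoothings \cite{haskins2004special} of $\cone(\Sigma)\cup\cone(e^{i\pi/n}\Sigma)$. These are of the form $\{\gamma(s)\cdot p: p\in\Sigma, s\in\R\}$ for a curve $\gamma$ in $\C^*$ solving an ODE. Topologically they are $\Sigma\times\R$, which is connected because $\Sigma$ is connected. For exactness, I would note that the Liouville form restricted to such an $L$ equals $d$ of an explicit function plus a term pulled back from the curve factor. Alternatively, one can argue directly that the Lagrangian is invariant under a family of dilation-like maps and so has vanishing Liouville class. The decay rate $\mu<0$ follows from the ODE asymptotics, as in Haskins. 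The curve stays a positive distance from $0$, so $0\notin L$.

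The main obstacle is exactness in case (2) when $H^1(\Sigma)\neq 0$. Here I would compute $\lambda|_L$ for the curve-times-link parametrization. Since the cone itself is exact with zero Liouville form on the link directions (the cone is Legendrian-based), $\lambda|_L$ reduces to $\mathrm{Im}(\bar\gamma\,\dot\gamma)\,|p|^2\,ds$. This is a form in $s$ alone and is therefore exact. With all four conditions verified in both cases, Theorem \ref{thm: main} applies.
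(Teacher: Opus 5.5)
Your proposal is correct and follows the paper's route: the corollary comes from feeding Lawlor necks (case 1) and Haskins' smoothings (case 2) into Theorem \ref{thm: main}. Your checks of exactness, connectedness, rate $\mu=2-n<0$, and $0\notin L$ (the last of which the paper leaves implicit) are exactly what the paper's citations supply. Your aside about ``dilation-like'' invariance is not valid, since a single neck is not dilation-invariant, but you do not need it: your direct computation showing that $\lambda|_L$ is a function of $s$ times $ds$ already settles exactness.
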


\subsubsection{Connectedness of the link}

It is important to note that this paper lies outside the scope of the uniqueness theorem for tangent cones proved by Collins and Li \cite{collinsli2023uniqueness}. Their result requires the link $\Sigma$ of the cone $C$ to be connected. A straightforward application of the maximum principle to the Liouville potential shows that this connectedness condition rules out the existence of a smooth, exact asymptotic smoothing with rate $\mu<0$. Therefore, in our construction, $\Sigma$ is necessarily disconnected.

A key geometric significance of our result is that while the link, hence the tangent cone $(C\setminus\{0\}) \times\R$ is disconnected, $Y\setminus\{0\}$ is connected. This is achieved precisely because the asymptotic smoothing $L$ is connected, effectively connecting the disconnected ends of $C = \cone(\Sigma)$. Note that this phenomenon distinguishes the special Lagrangian case from the minimal hypersurface case, where the link must be connected by Frankel's theorem \cite{frankel1961manifolds}.

In fact, we conjecture that if $C$ has connected link and is stable (i.e. the only linear and quadratic harmonic functions on $C$ correspond to translations and rotations, see \cite{joyce2003specialV}), then there do not exist isolated singularities with cylindrical tangent cone $C\times\R$:

\begin{conjecture}
    Suppose $Y$ is an exact special Lagrangian submanifold in $\C^{n+1}$ with tangent cone $C\times\R$ at $0$ where $C$ is stable and has a smooth connected link, then $0$ is not an isolated singularity.
\end{conjecture}

\subsubsection{Connectedness of the smoothing}

If we relax the assumption that $L$ is connected—for instance, if $L = \bigcup L_m$ is asymptotic to $C = \bigcup C_m$ where each $L_m$ is asymptotic to $C_m$—then we could apply the theorem to each pair $(L_m,C_m)$ to obtain individual $Y_m$. Their union, $Y=\bigcup Y_m$, would then have $C\times\R$ as its tangent cone, and $Y\setminus\{0\}$ will have the same number of connected components as $L$.

We may consider a more degenerate case when $C=\bigcup_m P_m$ is a union of planes. By taking $Y$ as the union of small special Lagrangian graphs over the planes $P_m\times\R$, one can obtain examples of special Lagrangian manifolds with cylindrical tangent cone $C\times\R$ and an isolated singularity at the origin (see Appendix \ref{app: disconn exm}). These examples are, however, trivial in the sense that  $Y\setminus\{0\}$ is disconnected. They are essentially a union of smooth perturbations of the planes $\bigcup_m P_m\times\R$ that preserve the tangent plane at the origin, without connecting the distinct components.

\subsection{Overview of the construction}

The proof of the main theorem follows a general analytic strategy common in geometric analysis: first, construct a sufficiently accurate approximate solution $X$ with an isolated singularity and the correct tangent cone $C\times\R$; second, perturb $X$ to an exact special Lagrangian solution $Y$ by inverting the linearization of the Lagrangian angle operator. We obtain the approximate solution $X$ through a gluing construction, seemingly similar to the minimal hypersurfaces construction \cite{szekelyhidi2021minimal} from an analytic perspective. However, the adaptation to the special Lagrangian setting introduces profound differences that constitute a major focus of this paper.

\subsubsection{First attempt}

Let $a>1$ and $u$ denote the coordinates on the $\R$ factor of $C\times\R$. Naively smoothing the cylindrical cone by replacing each $u$-slice with a scaled copy $|u|^a L$ of the smoothing $L$ yields a submanifold $X_0$ with an isolated singularity and tangent cone $C\times\R$ at the origin (see Example \ref{exm: X_0} and Figure \ref{fig_of_X0}). This can be a model of the approximate solution near the singular axis $\{0\}\times\R$.

On the other hand, the linearization of the Lagrangian angle operator is the Laplacian, so in the region away from the singular axis $\{0\}\times\R$, a special Lagrangian with tangent cone $C\times \R$ up to first order should be the graph of $d\varphi_a$ for a homogeneous harmonic function $\varphi_a$. We define this graph as $X_1$.

A natural first attempt is to define $X$ by interpolating between $X_0$ and $X_1$ using a cutoff function on the $1$-forms defining then as graphs over $C\times\R$. However, this fails for two fundamental reasons specific to the Lagrangian setting:
\begin{itemize}
    \item The naive smoothing $X_0$ is \textit{not} Lagrangian.
    \item The interpolation of two Lagrangian graphs may \textit{not} be Lagrangian.
\end{itemize}

\begin{figure}
\begin{center}
\begin{tikzpicture}
    \begin{axis}[
        width=12cm,
        height=8cm,
        view={50}{25},
        xlabel={},
        ylabel={$u$},
        zlabel={},
        clip=false,
        % Remove axis numbers and grid
        xtick=\empty,
        ytick=\empty,
        ztick=\empty,
        grid=none,
        axis lines=center,
        axis line style={thin}
    ]
    % Upper sheet wireframe
    \addplot3[
        restrict z to domain*={0:5},
        mesh,
        domain=-1:1,
        domain y=-.3:.3,
        samples=25,
        samples y=25,
        black,
        thin
    ] ({x}, {y}, {sqrt(x^2 + (abs(y))^(8/3))});
    \end{axis}
\end{tikzpicture}
\caption{$X_0$ from Example \ref{exm: X_0}. We correct it to a Lagrangian submanifold $X_2$ in section \ref{subsec: lag corr}}
\label{fig_of_X0}
\end{center}
\end{figure}
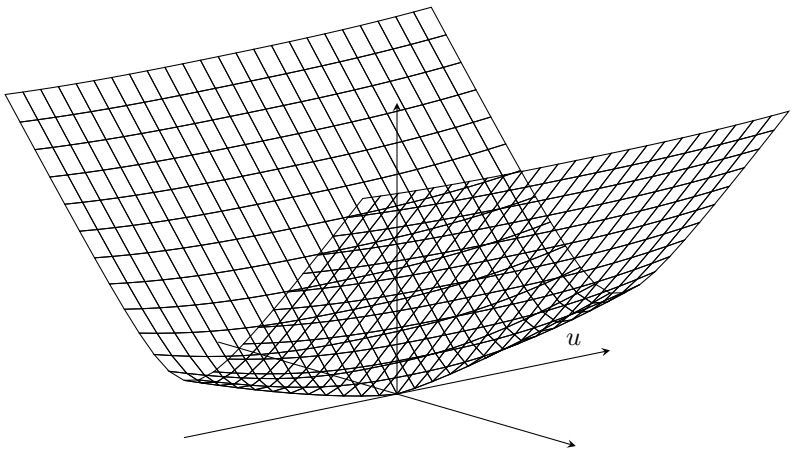

\subsubsection{Lagrangian correction and exactness}\label{subsubsec: Lag corr and exact}

There is a natural way of correcting $X_0$ to a Lagrangian $X_2$ that fixes the $u$-cross-sections (thus only changing the imaginary part of the last coordinate). A key observation is that such a correction exists precisely when $L$ is an \textit{exact} Lagrangian submanifold (see Remark \ref{rmk: prop of X_2}(3)).

\subsubsection{Cohomological information involved when gluing Lagrangian graphs}\label{subsubsec: glue}

In the minimal hypersurface case, one can often glue two graphical descriptions simply by using a cutoff function $\chi$ on the graphing functions themselves. In our setting, where we wish to glue two Lagrangian graphs, this naive approach fails. Suppose we have two Lagrangian submanifolds given as graphs of closed $1$-forms $\alpha_1$ and $\alpha_2$ over a Lagrangian $X$ (see Theorem \ref{thm: Lag nbhd} for the graph of closed $1$-forms). $\chi \alpha_1 + (1-\chi)\alpha_2$ in general is not closed, so its graph may not be Lagrangian. The correct approach, as detailed in the paper, is to work with \textit{potentials}. If the $1$-forms are exact, i.e., $df_1=\alpha_1$ and $df_2=\alpha_2$, then the graph of
\[
    \bar\alpha = d(\chi f_1 + (1-\chi)f_2) = \chi df_1 + (1-\chi)df_2 + d\chi (f_1-f_2)
\]
is Lagrangian. This reveals a critical requirement: the two graphical pieces must be exact in the gluing region. In our construction, this condition is guaranteed by $\mu < 0$ (see Remark \ref{rmk: exactness of graphicality}), or the exactness of $L$.

The presence of the $d\chi (f_1-f_2)$ term in $\bar\alpha$ introduces another layer of complexity. The potentials $f_1$ and $f_2$ are defined up to an additive constant on each connected component of the graph. While these constants do not affect $\alpha_1$ and $\alpha_2$, they directly impact the interpolating $1$-form $\bar\alpha$, thus affecting the geometry of its graph. A successful global construction, therefore, requires a canonical way to match these constants across the manifold. The relevant constants for our gluing are precisely the asymptotic limits of the Liouville potential $\beta_L$ from Lemma \ref{lem: beta_L near infty}, which are denoted $c_{\infty,m}$ for each connected component of $\Sigma_m$ of $\Sigma$.

\subsubsection{Perturbation}

The final approximate solution $X$, defined piecewise as the harmonic graph $X_1=\gr_{C\times\R}(d\varphi_a)$, the Lagrangian correction $X_2$, and the Lagrangian interpolation, has two key properties in a neighborhood of the origin.
\begin{enumerate}
    \item The Lagrangian angle $\theta_X$ decays rapidly near the singularity.
    \item The linearized operator $\Delta_X$ is invertible on suitable weighted function spaces.
\end{enumerate}
A standard contraction mapping argument (Proposition\ref{prop: main pert}) then yields a small perturbation $Y$ of $X$ that is genuinely special Lagrangian, completing the proof.

\subsection{Cohomological invariants and holomorphic discs}

This section is an attempt to explain the geometric intuition behind the issues that arise in the gluing step in relation to relevant notions in symplectic geometry. For a special Lagrangian smoothing $L$ of a cone $C = \cone(\Sigma)$, Joyce \cite{joyce2004specialI} (Definition 7.2) defined two cohomological invariants $Y(L)$ and $Z(L)$.

\begin{itemize}
    \item By \cite{joyce2004specialI} Theorem 7.6, $Y(L) = [\alpha]\in H^1(\Sigma\times [\Lambda,\infty))\cong H^1(\Sigma)$ where $\alpha$ is the $1$-form of which $L$ is the graph over $C$ near the asymptotic ends (see Remark \ref{rmk: exactness of graphicality}). Hence, in the discussion of \ref{subsubsec: glue}, $Y(L)=0$ means there is no obstruction to gluing $L$ at the asymptotic ends.
    \item Assuming $L$ is exact and connected, the Liouville potential $\beta_L$ is defined up to an additive constant. If we normalize it such that its asymptotic limits $c_{\infty,m}$ (see Lemma \ref{lem: beta_L near infty}) satisfy
    \begin{equation}\label{eqn: balancing cond}
        \sum_m \operatorname{vol}(\Sigma_m)c_{\infty,m} = 0.
    \end{equation}
    where the volume is computed as the $(n-1)$-dimensional volume of $\Sigma_m$ as a submanifold of the unit sphere $S^{2n-1}(1)\subset \C^n$, then following  \cite{joyce2004desingIII}, the invariant $Z(L)$ is precisely $(c_{\infty,m})\in \oplus_m\R \cong\oplus_m H^{n-1}(\Sigma_m) = H^{n-1}(\Sigma)$ under the normalization \eqref{eqn: balancing cond} (see (86) of \cite{joyce2004desingIII}).
\end{itemize}

We also note the relation between the cohomological invariants and holomorphic discs, which are important ingredients of the uniqueness of Lawlor necks \cite{imagi2015uniqueness}.

\begin{itemize}
    \item $Y(L)=0$ means any holomorphic disc bounded entirely by the asymptotically conical region of $L$ has zero area, i.e., such discs do not exist.
    \item The Lagrangian $L$ is exact means any holomorphic disc bounded entirely by $L$ has zero area, i.e., such discs do not exist. Moreover, $L$ is Hamiltonian isotopic to a scaling of itself (see Remark \ref{rmk: prop of X_2}(4)). This provides another interpretation of the existence of Lagrangian correction.
    \item $c_{\infty,m} - c_{\infty,m'}$ is the area of a holomorphic disc bounded by a triangle $L$, $\cone(\Sigma_m)$, and $\cone(\Sigma_{m'})$ (in case such a disc exists, see \cite{imagi2015uniqueness}). Fixing a scaling of $L$, under a scaling by $|u|^a$, the corresponding constants scale to $|u|^{2a} c_{\infty,m}$. This corresponds to the fact that the area of a holomorphic disc, which is real two dimensional, scales with the square of the scaling factor, i.e. $\operatorname{Area}(\lambda\Sigma) = \lambda^2\operatorname{Area}(\Sigma)$. This provides an interpretation of the dominant term $u^{2a}c_{\infty,m}$ of $G$ in Definition \ref{def: X_2 graph}, which cannot be obtained from the analytic perspective of viewing $X_2$ merely as a minimal smoothing of $C$ in each $u$-cross-section. When $L$ is a Lawlor neck asymptotic to $P_1\cup P_2$ with only two ends, \eqref{eqn: balancing cond} becomes $c_{\infty,1} = -c_{\infty,2} = A$. Up to a sign, $A$ is the sole parameter of the family of Lawlor necks (see \cite{imagi2015uniqueness}).
\end{itemize}

The paper is organized as follows. Section \S\ref{sec: Prelim} explains relevant preliminaries in special Lagrangian geometry. Section \S\ref{sec: Perturb} identifies sufficient conditions on an approximate solution $X$ that allow it to be perturbed to an actual solution. Finally, section \S\ref{sec: approx sol} explicitly constructs the approximate solution $X$ and verifies that it satisfies the sufficient conditions of section \S\ref{sec: Perturb}, thus proving the main theorem \ref{thm: main}. Section \S\ref{sec: approx sol} contains the main technical crux of the paper.

\subsection*{Acknowledgments} I would like to thank G\'{a}bor Sz\'{e}kelyhidi for suggesting this project and for his constant support over the years. This work was supported in part by NSF grant DMS-2506325.

\subsection{Notations}
Throughout the paper, we use the following notations:
\begin{itemize}
    \item We assume $n\geq 3$. Identify $\C^{n+1}$ with $\C^n\times \C$ equipped with standard coordinates $(z,w)\subset \C^n\times \C$, and write $z_j = x_j + iy_j$ for $j=1,\dots,n$ and $w=u+iv$. The standard symplectic forms are
    \[
        \omega_{\C^n} = \sum_{j=1}^n dx_j\wedge dy_j, \quad \omega_{\C^{n+1}} = \sum_{j=1}^n dx_j\wedge dy_j + du\wedge dv.
    \]
    \item Let $\Omega_n = dz_1\wedge\dots\wedge dz_n$ and $\Omega_{n+1} = dz_1\wedge\dots\wedge dz_n\wedge dw$ be the standard holomorphic volume forms. We say $N\subset \C^n$ is \textit{special Lagrangian} if $\im(\Omega_n)|_{N} = 0$; similarly, $X\subset \C^{n+1}$ is special Lagrangian if $\im(\Omega_{n+1})|_{X} = 0$. Equivalently, their Lagrangian angles $\theta_N$ and $\theta_X$ are identically $0$.
    \item We do not assume the link $\Sigma$ to be connected. We denote the connected components of $\Sigma$ by $\{\Sigma_m\}_{m=1}^M$, then $C = \bigcup_m\cone(\Sigma_m)$.
    \item By a \textit{cylindrical special Lagrangian cone} in $\C^{n+1}=\C^n\times\C$, we mean a product $C\times\R$ where $C=\operatorname{Cone}(\Sigma)\subset \C^n$ is a special Lagrangian cone in $\C^{n}$ with a smooth link $\Sigma$, and the $\R$ factor corresponds to the real line $\{v=0\}$ in the second factor of $\C^n\times\C$. We call it the singular axis $\R_u$.
    \item Denote by $\rho = |(z,w)|$ the radial function in $\C^{n+1}$ and by $r = |z|$ the radial function in $\C^n$. For a large constant $A$, we write $\rho^{-1}(0,A^{-1}) = \{\rho\in (0, A^{-1})\}$ for a small punctured neighborhood of the origin in $\C^{n+1}$.
    \item Fix a large integer $K$. We say a submanifold in $\C^n$ or $\C^{n+1}$ has \textit{$C^K$-bounded geometry} if the induced metric has bounded $C^K$-norm. Occasionally, we may omit $C^K$ and only say bounded geometry for the same meaning.
    \item Throughout the paper, $C,C_k,\dots$ denote positive constants that may change from line to line, but always depend only on stated quantities.
\end{itemize}

\section{Preliminary results}\label{sec: Prelim}

\subsection{Lagrangian neighborhood and McLean's theorem}

Given a Lagrangian submanifold $N$, we wish to describe the nearby Lagrangian submanifolds as graphs over $N$. We state the following version of the Lagrangian neighborhood theorem, adapted from Theorem 4.2 in \cite{joyce2004specialI}, which follows directly from the proof of Theorem 7.1 in \cite{weinstein1971symplectic} by Weinstein.

\begin{theorem}\label{thm: Lag nbhd}
    Let $N\subset\C^n$ be an embedded Lagrangian submanifold with bounded $C^K$-geometry. Suppose $N$ has a neighborhood $\cL_N$ with a fiber bundle structure $\cL_N = \{L_x: x\in N\}$, where each fiber $L_x$ is an embedded non-compact Lagrangian submanifold. Then there exists a fiber-preserving symplectomorphism $\Phi:U\rightarrow \cL_N$ from an open neighborhood $U$ of the zero section $N\subset T^*N$ onto its image in $\cL_N$.
    
    Moreover, if the fibers $L_x$ are perpendicular to $N$ at their intersections, i.e., $T_xL_x \perp T_xN$ for all $x\in N$, then
    \begin{equation}\label{eqn: normal Lag nbhd}
        \Phi(\alpha)[x] = x + J\alpha^{\sharp}(x) + Q\bigl(x,\alpha(x),\nabla\alpha(x)\bigr)
    \end{equation}
    for any $1$-form $\alpha$ on $N$, where $\alpha^{\sharp}$ is the dual vector field with respect to the ambient metric and $Q(x,y,z) = O(|y|^2+|z|^2)$ for small $(y,z)$.
\end{theorem}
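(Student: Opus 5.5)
The plan is the standard Weinstein argument: build a symplectomorphism by hand on the zero section, correct it with a Moser-type path argument, and then Taylor-expand to first order in the perpendicular case.

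\emph{Step 1: a fiber-preserving diffeomorphism that is symplectic along $N$.} Since each $L_x$ is an embedded Lagrangian meeting $N$ transversally at $x$, the map $T_x^*N\to T_xL_x$ is canonical: a covector $\xi\in T_x^*N$ corresponds to the unique $v\in T_xL_x$ with $\omega(v,\cdot)|_{T_xN}=\xi$. This is an isomorphism because $T_xN$ and $T_xL_x$ are complementary Lagrangian subspaces, so $\omega$ pairs them perfectly.

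Next, choose a smooth family of diffeomorphisms $\psi_x$ from a neighborhood of $0$ in $T^*_xN$ onto a neighborhood of $x$ in $L_x$, with $\psi_x(0)=x$ and $d\psi_x(0)$ equal to the isomorphism above. A concrete choice is to compose with the exponential map of $L_x$ for the induced metric. Set $\Phi_0(x,\xi)=\psi_x(\xi)$.

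Then $\Phi_0$ is fiber-preserving and restricts to the identity on $N$. A direct check at points of the zero section gives $\Phi_0^*\omega=\omega_{can}$ there. The vertical–vertical block vanishes because $L_x$ is Lagrangian and fibers of $T^*N$ are Lagrangian. The horizontal–horizontal block vanishes because $N$ is Lagrangian. The mixed block matches by the choice of $d\psi_x(0)$. Bounded $C^K$ geometry, together with uniform embeddedness of the fibers, gives a neighborhood $U$ of uniform size on which $\Phi_0$ is a diffeomorphism onto its image.

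\emph{Step 2: Moser correction preserving fibers.} Let $\omega_1=\Phi_0^*\omega$ and $\omega_0=\omega_{can}$, and set $\omega_t=\omega_0+t(\omega_1-\omega_0)$. These are nondegenerate near the zero section. Both $\omega_1$ and $\omega_0$ make every fiber $T_x^*N$ Lagrangian, and they agree along the zero section.

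Use the fiberwise radial homotopy $h_s(x,\xi)=(x,s\xi)$ to write $\omega_1-\omega_0=d\sigma$. Here $\sigma=\int_0^1 h_s^*(\iota_R(\omega_1-\omega_0))\,ds/s$, where $R$ is the fiberwise Euler field. Then $\sigma$ vanishes along the zero section to second order.

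The key point is that $\sigma$ annihilates vertical vectors. This holds because $R$ is vertical and both forms vanish on pairs of vertical vectors. Consequently the Moser vector field $Y_t$ defined by $\iota_{Y_t}\omega_t=-\sigma$ is $\omega_t$-orthogonal to the fiber tangent spaces. Since the fibers are $\omega_t$-Lagrangian, this means $Y_t$ is tangent to the fibers.

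Its flow therefore preserves fibers, fixes $N$, and exists on a (possibly smaller) uniform neighborhood. Setting $\Phi=\Phi_0\circ\mathrm{Fl}_1$ gives the required fiber-preserving symplectomorphism. This tangency argument is the step needing most care, and it is where the hypothesis that the $L_x$ are Lagrangian is used essentially.

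\emph{Step 3: the expansion \eqref{eqn: normal Lag nbhd}.} The flow $\mathrm{Fl}_1$ agrees with the identity to first order along $N$, since $\sigma=O(|\xi|^2)$. Hence $\Phi(x,\xi)=x+d\psi_x(0)\xi+O(|\xi|^2)$.

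When $T_xL_x\perp T_xN$, we have $T_xL_x=JT_xN$. The condition $\omega(v,\cdot)|_{T_xN}=\xi$ with $v=Jw$ gives $g(w,\cdot)=\xi$ up to the sign convention, i.e.\ $v=J\xi^\sharp$.

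Evaluating along a $1$-form $\alpha$ yields $\Phi(\alpha)[x]=x+J\alpha^\sharp(x)+O(|\alpha(x)|^2)$. The $\nabla\alpha$ dependence permitted in $Q$ absorbs any dependence introduced by measuring in ambient coordinates. The uniform quadratic bound on $Q$ follows from the bounded $C^K$ geometry.
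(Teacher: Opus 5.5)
Your proposal is correct and follows the approach the paper relies on. The paper gives no proof of its own; it defers to Weinstein's proof of his Theorem 7.1 (via Joyce's Theorem 4.2). That proof is exactly your construction: a fiber-preserving map that is symplectic along $N$, followed by a Moser flow whose primitive vanishes on the Lagrangian fibers, so that the flow preserves fibers. Your caveats are accurate refinements of a statement the paper leaves loose: the sign in $J\alpha^\sharp$ depends on the convention for $\omega_{can}$, and a uniform-size $U$ with uniform constants in $Q$ needs uniform control of the fibers $L_x$, not just bounded geometry of $N$.
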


We refer to $\Phi(U)$ as a \textbf{Lagrangian neighborhood} of $N$. Given a $1$-form $\alpha$ on $N$, we denote the image of $\alpha$ under $\Phi(\alpha)$ as the graph of $\alpha$:
\[
    \gr_N(\alpha) = \Phi(\alpha).
\]
Then $\gr_N(\alpha)$ is Lagrangian in $\C^n$ if and only if $\alpha$ is closed. In a perpendicular Lagrangian neighborhood as above, McLean's theorem \cite{mclean1998deformations} states that the Lagrangian angle of a nearby Lagrangian graph changes by $d^*\alpha$ up to at least quadratic terms:

\begin{theorem}[McLean]\label{thm: McL}
    In a Lagrangian neighborhood as above, the Lagrangian angle of $\gr_N(\alpha)$ has the form
    \[
        \theta_{\gr_N(\alpha)}\bigl(x,\alpha(x)\bigr) = \theta_N(x) + d^*\alpha(x) + Q\bigl(x,\alpha(x),\nabla \alpha(x)\bigr)
    \]
    where $Q(x,y,z) = O(|y|^2+|z|^2)$ for small $y,z$.
\end{theorem}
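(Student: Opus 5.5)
We will derive the formula from the Lagrangian-angle description via the holomorphic volume form, $\Omega_n|_{N'} = e^{i\theta_{N'}}\operatorname{vol}_{N'}$. Pull back $\Omega_n$ to $N$ by the map $F_\alpha : x\mapsto \Phi(\alpha)[x]$ and expand to first order in $(\alpha,\nabla\alpha)$. By \eqref{eqn: normal Lag nbhd} we have $F_\alpha(x) = x + J\alpha^\sharp(x) + Q(x,\alpha,\nabla\alpha)$, so
\[
dF_\alpha = \iota + J\nabla\alpha^\sharp + (\text{terms from } \nabla J\alpha^\sharp \text{ along } N) + O(|\alpha|^2+|\nabla\alpha|^2),
\]
where $\iota$ is the inclusion $TN\to\C^n$. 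Since $J$ and $\Omega_n$ are parallel on $\C^n$, the derivative of $J\alpha^\sharp$ splits into a normal part $J\nabla^N\alpha^\sharp$ and a tangential part. The tangential part equals $J$ applied to the second fundamental form paired with $\alpha^\sharp$. Here we use that $N$ Lagrangian gives $JTN=\nu N$, and that the normal derivative of a normal field $J\alpha^\sharp$ has a tangential component $-A_{J\alpha^\sharp}$.

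First, work at a point $x$ with an orthonormal frame $e_1,\dots,e_n$ of $T_xN$. Then $\Omega_n(e_1,\dots,e_n)=e^{i\theta_N(x)}$, and since $\Omega_n$ is complex multilinear, replacing $e_j$ by $e_j + J v_j$ changes $\Omega_n$ to first order by $i\sum_j \langle e_j, v_j\rangle$ times $e^{i\theta_N}$ after projecting $v_j$ onto $T_xN$. With $v_j = \nabla_{e_j}\alpha^\sharp$, this first-order complex factor is $1 + i\,\mathrm{tr}(\nabla\alpha^\sharp) + (\text{real first-order terms})$. The tangential contribution $-A_{J\alpha^\sharp}e_j$ contributes a real factor involving $\mathrm{tr}A_{J\alpha^\sharp} = -\langle H, J\alpha^\sharp\rangle$, which by the Lagrangian identity $H = J\nabla\theta_N$ equals a multiple of $\langle\nabla\theta_N,\alpha^\sharp\rangle$. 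We then divide by the induced volume, which is real and equal to $1+O(|\nabla\alpha|)$, and take the argument. This gives
\[
\theta_{\gr_N(\alpha)} = \theta_N + \mathrm{tr}\,\nabla\alpha^\sharp + O(|\alpha|^2+|\nabla\alpha|^2)
\]
up to the sign convention $d^*\alpha = \mp\operatorname{div}\alpha^\sharp$, which we fix to match the statement.

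The main obstacle is making sure no first-order term in $\alpha$ itself, without derivatives, survives in the imaginary part, and tracking the sign. The point is that the real terms (metric change and the $H$-term) only affect the modulus. The argument of $\Omega_n$ evaluated on the new frame depends on the frame only through its $\C$-linear span, which is $\C$-determined by $(e_j + J\nabla_{e_j}\alpha^\sharp)$ modulo real rescaling to first order. For this reason we will phrase the computation as $\det_\C$ of the matrix $I + i\,\nabla\alpha^\sharp + R$ with $R$ real, whose argument is $\mathrm{tr}\nabla\alpha^\sharp + O(\cdot^2)$.

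Finally, the quadratic remainder $Q$ of \eqref{eqn: normal Lag nbhd} and the higher-order terms of the determinant expansion are smooth functions of $(x,\alpha,\nabla\alpha)$ vanishing to second order. Their constants are uniform in $x$ by the bounded $C^K$ geometry of $N$. This gives the asserted form of $Q$.
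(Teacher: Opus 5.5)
The paper gives no proof of this statement; it is quoted from McLean \cite{mclean1998deformations}. Your argument is correct, but it takes a different route from the standard one.

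The standard proof is a first-variation computation via Cartan's formula. Since $d\Omega_n=0$, one has $\frac{d}{dt}\big|_{t=0}F_{t\alpha}^*\Omega_n = d\bigl(\iota_{J\alpha^\sharp}\Omega_n|_N\bigr)$. Complex linearity gives $\iota_{J\alpha^\sharp}\Omega_n|_N = ie^{i\theta_N}{*\alpha}$, so the variation is $e^{i\theta_N}\bigl(i\,d{*\alpha}-d\theta_N\wedge{*\alpha}\bigr)$. Relative to $e^{i\theta_N}$, the imaginary part is the $d^*\alpha$ term and the real part is the volume variation.

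You instead compute $\arg\det_\C(I+iV+W)$ pointwise, in the complex frame given by an orthonormal frame of $T_xN$. Your key observation is that the only first-order term not involving $\nabla\alpha$, namely $J\,\mathrm{II}(e_j,\alpha^\sharp)$, is tangent to $N$ and hence real in that frame. This is the pointwise counterpart of the real term $-d\theta_N\wedge{*\alpha}$.

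What each route buys:
\begin{itemize}
\item The Cartan route is invariant and short, but it yields only the derivative at $t=0$. To reach the stated form one still has to observe that the angle of the graph at $\Phi(\alpha)[x]$ is a smooth function of the one-jet $(x,\alpha(x),\nabla\alpha(x))$, with bounds uniform under bounded geometry.
\item Your computation exhibits that function directly. So the structure of $Q$ and the uniformity of its constants come for free; this is exactly what Proposition \ref{prop: bdd op & Lip est} uses after rescaling. It also makes plain that neither $d\alpha=0$ nor constancy of $\theta_N$ is needed.
\end{itemize}

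Three small repairs are needed.
\begin{itemize}
\item Your remainder for $dF_\alpha$ is $O(|\alpha|^2+|\nabla\alpha|^2)$ only because the position remainder in \eqref{eqn: normal Lag nbhd} in fact depends on $(x,\alpha(x))$ alone, since $\Phi$ is a map on $T^*N$. Say this explicitly: a genuine dependence on $\nabla\alpha$ would put $\nabla^2\alpha$ into $dF_\alpha$.
\item The volume factor may be discarded simply because $\operatorname{vol}_{\gr_N(\alpha)}(dF_\alpha e_1,\dots,dF_\alpha e_n)$ is real and positive. The remark about $\C$-linear spans should go, since a totally real frame spans all of $\C^n$ over $\C$.
\item The sign is forced rather than chosen. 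With $x+J\alpha^\sharp$ and $\Omega_n=dz_1\wedge\dots\wedge dz_n$ you get $+\operatorname{div}\alpha^\sharp$, so $d^*$ in the statement must be read as $\operatorname{div}$.
\end{itemize}
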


In this paper, we are only interested in Lagrangian deformations that are graphs of \textit{exact} forms $\alpha = df$ for some potential function $f:N \rightarrow\R$. For such deformations, we define the Lagrangian angle operator $LA_N$ at the level of potential functions.

\begin{proposition}\label{prop:Lag ang op}
    The Lagrangian angle operator has the form
    \[
        LA_N(f)[x] \equiv \theta_{\gr_N(df)}\bigl(x, df(x)\bigr) = \theta_N + \Delta_N f(x) + Q\bigl(x,df(x),\nabla df(x)\bigr),
    \]
    where $Q(x,y,z) = O(|y|^2+|z|^2)$ for small $y,z$.
\end{proposition}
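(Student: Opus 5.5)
This follows directly from McLean's theorem (Theorem \ref{thm: McL}) applied to the exact form $\alpha = df$. The only genuine content is identifying the linear term $d^*(df)$ with $\Delta_N f$, with the sign convention the paper adopts for the Laplacian. Everything nonlinear is absorbed into $Q$.

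\textbf{Step 1: substitution.} Since $\alpha = df$ is closed, $\gr_N(df)$ is Lagrangian and its Lagrangian angle is defined. Theorem \ref{thm: McL} with $\alpha = df$ gives
\[
\theta_{\gr_N(df)}\bigl(x, df(x)\bigr) = \theta_N(x) + d^*df(x) + Q\bigl(x, df(x), \nabla df(x)\bigr).
\]
Here $Q(x,y,z) = O(|y|^2+|z|^2)$ uniformly in $x$. This uniformity is where the bounded $C^K$-geometry hypothesis in Theorem \ref{thm: Lag nbhd} is used.

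\textbf{Step 2: the linear term.} I would identify $d^*df$ with $\Delta_N f$, where $\Delta_N$ is the Laplace--Beltrami operator of the induced metric on $N$. The sign must be chosen so that $\Delta_N$ is the operator appearing as the linearization. Concretely, I would check with $N = \R^n \subset \C^n$ and $\Phi(\alpha)[x] = x + J\alpha^\sharp(x)$. Then $\gr_N(df)$ is the graph $y = \nabla f(x)$, whose Lagrangian angle is $\arg\det(I + i\,\mathrm{Hess} f)$. To first order this equals $\operatorname{tr}\mathrm{Hess} f = \sum_j \partial_j^2 f$, the analyst's Laplacian. So McLean's $d^*$ in Theorem \ref{thm: McL} must be understood with the sign making $d^*d = \Delta$, and with that convention the linear term is exactly $\Delta_N f$.

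The general case reduces to this flat computation at each point. Because the fibers $L_x$ are perpendicular to $N$, formula \eqref{eqn: normal Lag nbhd} agrees to first order with the flat model in normal coordinates. Curvature and second fundamental form terms enter only through $Q$, or through the zeroth-order term $\theta_N$.

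\textbf{Step 3: the remainder.} The remainder depends on $(x, df(x), \nabla df(x))$ and is quadratic in the last two arguments. This is exactly the form stated, so nothing further needs to be shown.

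\textbf{Main obstacle.} The only delicate point is the sign and normalization convention in Step 2. A convention mismatch would produce $-\Delta_N f$, or a factor from identifying $\alpha^\sharp$ via the ambient versus the induced metric. On $TN$ these two metrics agree, so no factor arises, and the flat model computation pins down the sign.
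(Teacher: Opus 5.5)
Your proposal is correct and is essentially the paper's own (implicit) argument: the paper states the proposition without proof, as the specialization of McLean's theorem to $\alpha = df$, with $d^*df$ read as $\Delta_N f$ and all nonlinear contributions absorbed into $Q$. Your flat-model sign check is a worthwhile addition, though if $\Delta_N$ is the nonnegative Laplacian used in the paper's lemma on homogeneous harmonic functions on $C$, your computation shows the linear term for $\Phi(\alpha)=x+J\alpha^{\sharp}$ is really $-\Delta_N f$; this convention mismatch is harmless, since only the kernel and invertibility of $\Delta_N$ are used later.
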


To be more concrete, we use the following choices of neighborhood $\cL_N$ that are consistent with earlier literature.

\begin{enumerate}
    \item For a cone with a smooth link $C\subset\C^n$, we take the neighborhood $\cL_C$ to be the dilation-equivariant ones (see section 4.1 of \cite{joyce2004specialI} for details).
    \item For a cylindrical cone $C\times \R\subset\C^{n}\times\C$, we set $\cL_{C\times\R} = \cL_C\times\C$, which is equivariant under dilation in $\C^n$, as in (1).
    \item For other smooth Lagrangian submanifolds $N$, we choose $\cL_N$ to be the normal tubular neighborhood.
\end{enumerate}

\begin{remark}
    In cases (1) and (2), the Lagrangian neighborhoods obtained are equivalent to the adapted Darboux coordinates introduced in Section 2.3 of \cite{collinsli2023uniqueness}.
\end{remark}

\begin{remark}\label{rmk: Lag nbhd for r^-1 bdd}
    Note that the induced geometries on $C$ and $C\times\R$ are not $C^K$-bounded as required by the Lagrangian neighborhood theorem \ref{thm: Lag nbhd} and McLean's theorem. Instead, after scaling up by $r^{-1}$, they become bounded. We will later refer to this property as $r^{-1}$-bounded geometry (see Definition \ref{def: r^-1 bd geom}). Notice that the corresponding Lagrangian neighborhoods chosen above also become bounded after the same scaling. Consequently, for Lagrangians with $r^{-1}$-bounded geometry and for suitably weighted functions that remain bounded after rescaling by $r^{-1}$ (see Definition \ref{def: doubly weighted}), we can still take Lagrangian graphs (Proposition \ref{prop: 2,2 can take graph}), define the Lagrangian angle operator, and control the quadratic terms (Proposition \ref{prop: bdd op & Lip est}).
\end{remark}

\subsection{Exact special Lagrangian smoothing of $C$}

The key building block of the construction of this paper is a smooth, exact special Lagrangian submanifold asymptotic to a cone with rate $\mu< 0$.

\begin{definition}\label{def: AC_submfld}
    Let $C=\operatorname{Cone}(\Sigma)\subset \C^n$ be a special Lagrangian cone with smooth link $\Sigma$. A smooth special Lagrangian submanifold $L\subset \C^n$ is \textit{asymptotic to $C$ with rate $\mu$} if it is the graph of an exact $1$-form outside a large ball $B_\Lambda$, i.e., there exists $\Lambda_0 >0$ such that for all $\Lambda \geq \Lambda_0$,
    \[
        L\cap \{r\geq \Lambda\} = \gr_{C\cap \{r\geq \Lambda\}}(dg)
    \]
    for some $g:C\cap \{r\geq \Lambda\}\rightarrow \R$ such that for all $k\geq 0$,
    \begin{equation}\label{eqn: AC of g}
        |\nabla^kg| \leq C_k r^{\mu-k}.
    \end{equation}
    Here, $\nabla$ is computed using the induced (conical) metric $g_C = r^2 g_\Sigma + dr^2$.
\end{definition}

\begin{remark}\label{rmk: exactness of graphicality}
    A priori, an asymptotically conical Lagrangian $L$ is the graph of a closed, but not necessarily exact, $1$-form $\alpha$ over $C$ outside a large ball. Under the asymptotic rate considered ($|\alpha|=O(r^{\mu-1})$), the fact that any loop $\gamma\subset C\cap \{r\geq \Lambda\}$ is homotopic to the scaled up $\lambda\gamma\subset C\cap \{r\geq \Lambda\}$ implies for arbitrarily large $\lambda$,
    \[
        \int_\gamma \alpha = \int_{\lambda\gamma} \alpha \leq C\lambda^{\mu}.
    \]
    Letting $\lambda\rightarrow\infty$ , we obtain $\int_\gamma \alpha = 0$; hence $\alpha$ is exact, yielding a potential $g$. Apriori, $g$ is defined up to a constant on each connected component of $\Sigma_m$. The decay of $|dg|=|\alpha|=O(r^{\mu-1})$ implies the oscillation of $g$ tends to zero at infinity on each component, thus we may normalize $g$ by letting
    \[
        \lim_{x\in \cone(\Sigma_m),|x|\rightarrow\infty} g(x) = 0.
    \]
\end{remark}

Note that $(\C^n,\omega_{\C^n})$ is a Liouville manifold with the Liouville $1$-form $\lambda$ and the corresponding Liouville vector field $v_\lambda$ defined by
\[
    \lambda = \frac{1}{2}\left(\sum_{j=1}^n x_j\wedge dy_j-y_j\wedge dx_j\right), \quad v_\lambda = \frac{1}{2}\left(\sum_{j=1}^n x_j\frac{\partial}{\partial x_j} + y_j\frac{\partial}{\partial y_j}\right).
\]
In particular, if we denote the position vector as $\vec x$, then $v_\lambda = \frac{1}{2}\vec x$.

\begin{definition}
    A Lagrangian submanifold $L$ in $\C^n$ is \textit{exact} if the restriction of the Liouville $1$-form to $L$ is exact; i.e. there exists $\beta_L:L\rightarrow\R$ such that $d\beta_L = \lambda|_L$. We assumed $L$ is connected, so $\beta_L$ is defined up to a constant.
\end{definition}

Following Section 2.3 of \cite{collinsli2023uniqueness}, for an exact Lagrangian submanifold asymptotic to $C$ with rate $\mu<0$, we get the following relation between the Liouville $\beta_L$ and the potential $g$ of the Lagrangian graph over $C$. This result can also be viewed as a result of the co-area formula applying to a graph over a cone.

\begin{lemma}\label{lem: beta_L near infty}
    There exist constants $c_{\infty,m}$ for each $m=1,...M$ such that
    \[
        \beta_L(\Phi(dg)[x]) =  c_{\infty,m} - \frac{1}{2}r^3\frac{\partial}{\partial r}\left(\frac{g(x)}{r^2}\right)
    \]
    for all $x\in \cone(\Sigma_m)\cap \{r\geq \Lambda\}$. In particular, $\beta_L\rightarrow c_{\infty,m}$ when approaching an end of $L$ asymptotic to $\cone(\Sigma_m)$
\end{lemma}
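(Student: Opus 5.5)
The plan is to compute the pullback of the Liouville form $\lambda$ to the asymptotically conical end of $L$ via the graph map, and to show it equals the differential of the explicit function $-\tfrac12 r^3\partial_r(g/r^2)$. It then follows that $\beta_L\circ\Phi(dg)$ differs from this function by a locally constant function on $C\cap\{r\geq\Lambda\}$. Each piece $\cone(\Sigma_m)\cap\{r\geq\Lambda\}\cong \Sigma_m\times[\Lambda,\infty)$ is connected, so we obtain one constant $c_{\infty,m}$ per end.

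\emph{Liouville field on $T^*C$.} The key structural input is that the Lagrangian neighborhood $\Phi$ for $C$ is dilation-equivariant, as chosen in (1) after Proposition \ref{prop:Lag ang op}. Ambient dilation $x\mapsto tx$ scales $\omega_{\C^n}$ by $t^2$. Under $\Phi$ it corresponds to the map on $T^*C$ sending $(p,\xi)$ to $(tp,\,t^2(\delta_t^{-1})^*\xi)$, which scales the canonical symplectic form $\omega_{can}$ by $t^2$ as well.

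Differentiating in $t$ at $t=1$, the ambient Liouville field $v_\lambda=\tfrac12\vec x$ pulls back to a vector field $Z$ on $T^*C$. This $Z$ is the sum of two pieces:
\begin{itemize}
\item $\tfrac12 V$, where $V$ is the cotangent lift of $r\partial_r$;
\item $E$, the fiberwise Euler field $\xi\partial_\xi$ (with the appropriate weight so that $\mathcal L_Z\omega_{can}=\omega_{can}$).
\end{itemize}
Since $\Phi$ is a symplectomorphism, we get
\[
\Phi^*\lambda=\Phi^*(\iota_{v_\lambda}\omega_{\C^n})=\iota_Z\omega_{can}.
\]
I expect this to be the main (though mild) obstacle: pinning down the weights and signs so that $Z$ is correct. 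I would check the result against the known fact that $\lambda|_C=0$, which holds because $\vec x$ is tangent to the Lagrangian cone; the $1$-form computed below does vanish on the zero section.

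\emph{Evaluating $\iota_Z\omega_{can}$.} Write $\omega_{can}=d\theta$, with $\theta$ the tautological form (sign convention fixed as in Theorem \ref{thm: Lag nbhd}).
\begin{itemize}
\item For the Euler field, $\iota_E d\theta=\theta$.
\item A cotangent lift preserves $\theta$, so Cartan's formula gives $\iota_V d\theta=\mathcal L_V\theta-d(\theta(V))=-d\bigl(\xi(r\partial_r)\bigr)$.
\end{itemize}
Hence
\[
\iota_Z\omega_{can}=\theta-\tfrac12\, d\bigl(\xi(r\partial_r)\bigr).
\]
Pulling back by the section $dg$, and using $(dg)^*\theta=dg$ and $(dg)^*\xi(r\partial_r)=r\partial_r g$, we obtain
\[
(\Phi\circ dg)^*\lambda=d\Bigl(g-\tfrac12 r\partial_r g\Bigr)=d\Bigl(-\tfrac12 r^3\partial_r\bigl(g r^{-2}\bigr)\Bigr).
\]
The last equality uses $r^3\partial_r(g r^{-2})=r\partial_r g-2g$. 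Since $d\beta_L=\lambda|_L$, the function $\beta_L(\Phi(dg)[x])+\tfrac12 r^3\partial_r(g/r^2)$ has zero differential on each connected end. It is therefore equal to a constant $c_{\infty,m}$ on $\cone(\Sigma_m)\cap\{r\geq\Lambda\}$. These constants do not depend on $\Lambda\geq\Lambda_0$, since the regions are nested.

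\emph{The limit statement.} By \eqref{eqn: AC of g} we have $|g|\le C_0 r^\mu$ and $|r\partial_r g|\le C_1 r^{\mu}$. Since $\mu<0$, the correction term $-\tfrac12(r\partial_r g-2g)$ tends to $0$ as $r\to\infty$, so $\beta_L\to c_{\infty,m}$ along the end asymptotic to $\cone(\Sigma_m)$. As a consistency check, the normalization of $g$ in Remark \ref{rmk: exactness of graphicality} is compatible with this computation. If one prefers to avoid the abstract cotangent-lift bookkeeping, an alternative route is the direct one alluded to in the text. Compute $\lambda|_L=\tfrac12\omega(\vec x,\cdot)$ along $\Phi(dg)$ using the radial/Euler decomposition of $\vec x$ in the dilation-equivariant coordinates; this is the same calculation in coordinates.
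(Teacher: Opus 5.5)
Your argument is correct up to an overall sign, and it supplies a derivation where the paper gives none. The paper only defers to the adapted Darboux coordinates of Collins--Li (Section 2.3), with a remark about the co-area formula. Your route is the coordinate-free version of that computation:
\begin{itemize}
\item dilation equivariance of $\cL_C$ identifies $v_\lambda=\tfrac12\vec x$ with $Z=\tfrac12 V+E$, so $\Phi^*\lambda=\iota_Z\omega_{can}$;
\item then $\mathcal L_V\theta=0$, $\iota_E d\theta=\theta$ and $(dg)^*\theta=dg$ finish the computation.
\end{itemize}
This shows that only the equivariance of the neighborhood matters, not its explicit form. The weights ($V+2E$ generating $(p,\xi)\mapsto(tp,t^2(\delta_t^{-1})^*\xi)$), the per-end constants, and the limit via $|g|+|r\partial_r g|\le Cr^{\mu}$ are all fine.

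The one error is the sign, at exactly the point you flagged as the obstacle. Theorem~\ref{thm: Lag nbhd} uses $\Phi(\alpha)[x]=x+J\alpha^\sharp+\cdots$ with $\omega_{\C^n}=\sum dx_j\wedge dy_j$. With that convention the form on $T^*C$ is $\omega_{can}=\sum dq_j\wedge d\xi_j=-d\theta$, not $d\theta$. This is already visible for $C=\R^n$, where $(q,\xi)\mapsto q+i\xi$ is an admissible equivariant neighborhood.

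Redoing your computation with $-d\theta$ gives $\beta_L(\Phi(dg)[x])=c_{\infty,m}+\tfrac12 r^3\partial_r(g/r^2)$. In the flat case you can check this directly: on $\{y=\nabla g(x)\}$ one finds $\lambda=d(\tfrac12 r\partial_r g-g)$.

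The paper's other conventions force this plus sign as well. The equivalence of Definitions~\ref{def: X_2 embed} and~\ref{def: X_2 graph} is proved without the explicit formula. It shows that $v=-2au^{2a-1}\beta_L$ coincides identically with the imaginary part $\partial_uG$ of $w$ on $\gr_{C\times\R}(dG)$. A direct computation gives, for $u>0$, $\partial_uG(x,u)=-2au^{2a-1}\bigl(c_{\infty,m}+\tfrac12 r^3\partial_r(g/r^2)\bigr)$ evaluated at $u^{-a}x$.

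So the displayed minus sign corresponds to the opposite graph convention $x-J\alpha^\sharp$. Your proof reaches it only because $\omega_{can}=d\theta$ is inconsistent with the convention you cite. Your proposed sanity check cannot detect this: both $\lambda|_C=0$ and $\mathcal L_Z\omega_{can}=\omega_{can}$ are insensitive to an overall sign of $\iota_Z\omega_{can}$.

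Nothing downstream is affected, since only the existence of the $c_{\infty,m}$ and the limit $\beta_L\to c_{\infty,m}$ are used later, and both hold with either sign. Still, you should either set $\omega_{can}=-d\theta$ and state the result with the plus sign, or declare the $x-J\alpha^\sharp$ convention explicitly.
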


\begin{corollary}\label{cor: asymp of beta_L}
    $\beta_L$ satisfies the global estimates
    \[
        |\beta_L| \leq C,\quad  |\nabla^k\beta_L| \leq C_k(1+r)^{\mu-k}\quad\text{for } k\geq 1
    \]
\end{corollary}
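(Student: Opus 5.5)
The corollary asserts two things: $\beta_L$ is bounded, and $|\nabla^k\beta_L|\le C_k(1+r)^{\mu-k}$ for $k\ge1$. The plan is to split $L$ into a compact part and the asymptotically conical ends. On the compact part both estimates hold by smoothness. On the ends we use the explicit formula of Lemma \ref{lem: beta_L near infty} together with the decay \eqref{eqn: AC of g} of the potential $g$.

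\emph{Compact region.} On $L\cap\{r\le \Lambda\}$, which is compact because $L$ is properly embedded and asymptotic to $C$, $\beta_L$ is smooth. So $|\beta_L|$ and every $|\nabla^k\beta_L|$ are bounded by constants. Since $(1+r)^{\mu-k}$ is bounded below by a positive constant there, the claimed estimates hold after enlarging $C_k$.

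\emph{Ends.} On each end $L\cap\{r\ge\Lambda\}=\gr_{C}(dg)$ over $\cone(\Sigma_m)$, Lemma \ref{lem: beta_L near infty} gives
\[
\beta_L\circ\Phi(dg) = c_{\infty,m} - \tfrac12 r^3\partial_r(g/r^2) = c_{\infty,m} - \tfrac12\bigl(r\,\partial_r g - 2g\bigr).
\]
Call the right-hand side $h$. By \eqref{eqn: AC of g}, $|h - c_{\infty,m}|\le C r^{\mu}$, so $\beta_L$ is bounded. The operators $r\partial_r$ and multiplication by constants preserve the class of functions satisfying $|\nabla^k f|\le C_k r^{\mu-k}$ on the cone with its conical metric. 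Indeed, $r\partial_r$ is the dilation vector field, so $\nabla^j(r\partial_r)$ is bounded by $C r^{1-j}$, and the Leibniz rule gives $|\nabla^k(r\partial_r g)|\le C_k r^{\mu-k}$. Hence $|\nabla^k_C h|\le C_k r^{\mu-k}$ for $k\ge1$.

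\emph{Transfer to $L$.} The remaining, and main, step is to pass from derivatives computed on $C$ to derivatives of $\beta_L$ with respect to the induced metric on $L$, via the diffeomorphism $F:x\mapsto\Phi(dg)[x]$. We use that the Lagrangian neighborhood $\cL_C$ is dilation-equivariant (Remark \ref{rmk: Lag nbhd for r^-1 bdd}). Rescale a region $\{r\sim R\}$ by $R^{-1}$. In the rescaled picture:
\begin{itemize}
\item the cone has bounded geometry;
\item the rescaled $1$-form $R^{-2}\,dg(R\,\cdot)$ has all derivatives bounded by $C R^{\mu-2}\to 0$;
\item so $F$ is $C^K$-close to the identity map of bounded-geometry manifolds, by \eqref{eqn: normal Lag nbhd}.
\end{itemize}
Consequently the induced metric on $L$ pulled back by $F$ is uniformly equivalent to $g_C$ in $C^K$ at scale $r$. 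Covariant derivatives of $h$ in the two metrics then differ by terms controlled by lower-order derivatives of $h$, each with the same weight $r^{-k}$. Scaling back gives $|\nabla^k_L\beta_L|\le C_k r^{\mu-k}$ on the ends. Since $r\ge\Lambda$ there, $r^{\mu-k}\le C(1+r)^{\mu-k}$, and combining with the compact region finishes the proof. The only care needed is bookkeeping with the weights under rescaling; there are no genuine analytic difficulties.
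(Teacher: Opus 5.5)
Your proposal is correct and follows the paper's proof. Like the paper, you split $L$ into the compact piece $L\cap\{r\le\Lambda\}$ and the ends, and on the ends you read off both bounds from the formula of Lemma \ref{lem: beta_L near infty} together with \eqref{eqn: AC of g}. Your rescaling argument, which transfers derivative bounds from $g_C$ to the induced metric on $L$, and your remark that the derivatives (not just $\beta_L$) are bounded on the compact piece, simply fill in details the paper leaves implicit.
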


\begin{proof}
     In $\{r\geq \Lambda\}$, the estimate follows from Lemma \ref{lem: beta_L near infty} and \eqref{eqn: AC of g}. In $\{r\leq \Lambda\}$, which is compact, $\beta_L$ is uniformly bounded.
\end{proof}

\begin{example}[Lawlor necks]
    In $\C^n$, fix angles $\{\theta_k\in (0,\pi)\}$ with $\theta_1 + \dots + \theta_n = \pi$, and consider the two transverse planes
    \[
        P_1 = \{(z_1,\dots,z_n):z_1,\dots,z_n \in \R\},\quad 
        P_2 = \{(e^{i\theta_1}z_1,\dots,e^{i\theta_n}z_n):z_1,\dots,z_n \in \R\}.
    \]
    Then $C= P_1\cup P_2$ is a special Lagrangian cone. Lawlor \cite{lawlor1989angle} constructed a family of smooth, exact, special Lagrangians $L$ asymptotic to $C = P_1\cup P_2$ with rate $2-n$, widely referred to as \textit{Lawlor necks}.
\end{example}

\begin{example}[Haskins necks]
    Let $\Sigma_0$ be a special Legendrian submanifold in $S^{2n-1}(1)\subset \C^n$, and write $e^{i\pi/n}\Sigma_0 = \{(e^{i\pi/n}z_1,\dots,e^{i\pi/n}z_n):(z_1,\dots,z_n)\in \Sigma_0\}$. Then $C = \mathrm{Cone}(\Sigma_0) \cup \mathrm{Cone}(e^{i\pi/n}\Sigma_0)$ is a special Lagrangian cone. Haskins \cite{haskins2004special} constructed a family of smooth, exact, special Lagrangians $L$ asymptotic to $C = \mathrm{Cone}(\Sigma_0) \cup \mathrm{Cone}(e^{i\pi/n}\Sigma_0)$ with rate $2-n$.
\end{example}

From now on, we fix a particular exact special Lagrangian $L$ asymptotic to $C$ with rate $\mu< 0$. Having fixed $L$, we also fix the associated data from Definition \ref{def: AC_submfld}: the region $\{r\geq \Lambda_0\}$ where $L$ is graphical over $C$, and the potential function $g$. We also assume $0\notin L$, so up to a change of scaling, we may assume $\dist(0,L)\geq 1$.

\subsection{Homogeneous harmonic function on $C$}\label{subsec: harm fcn on C}

We recall some basic facts on homogeneous harmonic functions on the cone $C=\operatorname{Cone}(\Sigma)$. For $d \in \R$, a function $\upsilon:\operatorname{Cone}(\Sigma)\rightarrow \R$ is called \textbf{homogeneous of degree $d$} if $\upsilon(t\cdot) \equiv t^d \upsilon$ for all $t>0$. Equivalently, $\upsilon$ can be written as $\upsilon(\sigma,r)\equiv r^d \phi(\sigma)$ for some function $\phi:\Sigma\rightarrow \R$.

\begin{lemma}\label{lem:homogen harmonic}
    Let $C=\operatorname{Cone}(\Sigma)$ with $\Sigma$ smooth and $n \geq 3$. Suppose $\upsilon(\sigma,r)\equiv r^d \phi(\sigma)$ is a homogeneous function of degree $d$ on $C$ for $\phi:\Sigma\rightarrow \R$. Then
    \[
        \Delta_C \upsilon(\sigma,r) = r^{d - 2}(\Delta_\Sigma \phi - d(d + n -2)\phi).
    \]
    Hence, $\upsilon$ is harmonic on $C$ if and only if $\phi$ is an eigenfunction of $\Delta_\Sigma$ with eigenvalue $d(d + n -2)$.
\end{lemma}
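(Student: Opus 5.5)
The plan is to compute $\Delta_C$ in polar coordinates on the cone and then apply it to $r^d\phi(\sigma)$. On $C\setminus\{0\}\cong (0,\infty)\times\Sigma$ the induced metric is $g_C = dr^2 + r^2 g_\Sigma$, where $\Sigma$ has dimension $n-1$. We use the sign convention in which $\Delta$ is the positive (analyst's negative) Laplacian, i.e. $\Delta = -\mathrm{div}\,\nabla = d^*d$. This matches the paper's choice $LA_N(f) = \theta_N + \Delta_N f + \dots$ with $\Delta = d^*d$, and it is the convention under which the stated formula has the sign shown. We will check the convention at the end against $n$ and the sign in the statement.

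First, the volume form is $\sqrt{\det g_C} = r^{n-1}\sqrt{\det g_\Sigma}$. The standard formula for a warped product then gives, for the geometer's Laplacian,
\[
-\Delta_C \upsilon = r^{1-n}\partial_r\bigl(r^{n-1}\partial_r \upsilon\bigr) + r^{-2}(-\Delta_\Sigma)\upsilon
= \partial_r^2\upsilon + \frac{n-1}{r}\partial_r\upsilon - r^{-2}\Delta_\Sigma \upsilon,
\]
where $\Delta_\Sigma$ acts in the $\sigma$ variables and carries the same positive sign convention. This comes from writing $\Delta = -\frac{1}{\sqrt{\det g}}\partial_i(\sqrt{\det g}\,g^{ij}\partial_j)$ in coordinates $(r,\sigma)$ and using $g^{rr}=1$, $g^{r\sigma}=0$, and $g^{\sigma\sigma}=r^{-2}g_\Sigma^{-1}$.

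Second, we substitute $\upsilon = r^d\phi$. Then $\partial_r^2\upsilon + \frac{n-1}{r}\partial_r\upsilon = \bigl(d(d-1) + (n-1)d\bigr)r^{d-2}\phi = d(d+n-2)r^{d-2}\phi$. This yields
\[
\Delta_C\upsilon = r^{d-2}\bigl(\Delta_\Sigma\phi - d(d+n-2)\phi\bigr),
\]
which is exactly the claim. The harmonicity statement follows immediately: since $r^{d-2}\neq 0$ on $C\setminus\{0\}$, $\Delta_C\upsilon=0$ if and only if $\Delta_\Sigma\phi = d(d+n-2)\phi$. In the positive convention this says $\phi$ is an eigenfunction with eigenvalue $d(d+n-2)$.

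There is no real obstacle here. The only delicate point is keeping the sign convention consistent, with the positive Laplacian on both $C$ and $\Sigma$, so that the eigenvalue appears as the nonnegative-spectrum quantity $d(d+n-2)$. The hypothesis $n\ge 3$ plays no role in this computation; it is needed elsewhere in the paper.
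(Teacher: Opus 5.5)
Your proof is correct and is the same approach the paper indicates: the paper gives no written proof and only refers to "computing the Laplacian in polar coordinates" on $g_C = dr^2 + r^2 g_\Sigma$, which is exactly your computation. You are also right that the stated sign requires the nonnegative convention $\Delta = d^*d$, and this matches the paper's later use of nonnegative eigenvalues of $\Delta_\Sigma$ in the spectral gap lemma.
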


\begin{definition}
    In the setting above, define
    \[
        D_\Sigma = \{d\in\R: d(d + n - 2)\text{ is an eigenvalue of }\Delta_\Sigma\}
    \]
    to be the set of degrees for which there exists a non-zero homogeneous harmonic function on $C=\operatorname{Cone}(\Sigma)$.
\end{definition}

Since $\Sigma$ is smooth and compact, the eigenvalues of $\Delta_\Sigma$ are non-negative. An immediate consequence is the following spectral gap.

\begin{lemma}\label{lem:spec gap}
    There are no non-constant homogeneous harmonic functions on $C=\operatorname{Cone}(\Sigma)$ with degrees in the interval $(2-n,0)$. i.e.,
    \[
        D_\Sigma \cap (2-n,0) = \varnothing.
    \]
\end{lemma}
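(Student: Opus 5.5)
The plan is to use Lemma \ref{lem:homogen harmonic} to reduce the statement to a sign condition on the quadratic $d(d+n-2)$. Suppose $\upsilon = r^d\phi(\sigma)$ is a non-constant homogeneous harmonic function on $C$ with $d\in(2-n,0)$. By Lemma \ref{lem:homogen harmonic}, $\phi$ satisfies
\[
\Delta_\Sigma\phi = d(d+n-2)\phi .
\]
Here the sign convention has $\Delta_\Sigma$ as the nonnegative Laplacian, so that its eigenvalues are nonnegative, as the paper states just before the lemma. For $d\in(2-n,0)$ we have $d<0$ and $d+n-2>0$, so $\lambda := d(d+n-2)<0$. Since $\Sigma$ is smooth and compact, every eigenvalue of $\Delta_\Sigma$ is nonnegative. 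Hence $\lambda$ is not an eigenvalue, which forces $\phi\equiv 0$ and therefore $\upsilon\equiv 0$. This shows $D_\Sigma\cap(2-n,0)=\varnothing$.

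The nonnegativity of the spectrum is justified componentwise, because $\Sigma$ may be disconnected. On each closed component $\Sigma_m$, integrating by parts gives
\[
\int_{\Sigma_m}\phi\,\Delta_\Sigma\phi = \int_{\Sigma_m}|\nabla\phi|^2 \ge 0 .
\]
So $\lambda\int\phi^2\ge 0$, and with $\lambda<0$ this yields $\phi=0$ on each $\Sigma_m$.

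A short check of the endpoints is worthwhile. The values $d=0$ and $d=2-n$ give $\lambda=0$, which corresponds to locally constant $\phi$. This is why "non-constant" appears in the statement and why the interval is open. Note that when $d=2-n$, the function $r^{2-n}\phi$ is non-constant even for constant $\phi$, but that degree lies outside the open interval.

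There is no real obstacle here; the only point needing care is to fix the sign convention of $\Delta_\Sigma$ consistently with Lemma \ref{lem:homogen harmonic}.
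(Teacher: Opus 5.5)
Your proof is correct and is the same argument the paper gives. By Lemma \ref{lem:homogen harmonic}, a homogeneous harmonic function of degree $d$ requires $d(d+n-2)$ to be an eigenvalue of the nonnegative operator $\Delta_\Sigma$, which is impossible for $d\in(2-n,0)$ since there $d(d+n-2)<0$. Your componentwise integration-by-parts check of nonnegativity, for possibly disconnected $\Sigma$, just spells out the standard fact the paper cites in one line.
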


\section{Perturbing in weighted spaces}\label{sec: Perturb}

Recall that to construct a special Lagrangian submanifold $Y$ with an isolated singularity modeled on $C\times \R$ in a neighborhood of the origin, we first build an approximate solution $X$ and then perturb it to an actual solution $Y$. This section establishes sufficient conditions on $X$ for the perturbation argument to succeed.

\subsection{\texorpdfstring{$r^{-1}$}{r^-1}-bounded geometry}

Observe that the geometry of $C\times \R$ degenerates when approaching the set $\{r=0\}$, but this degeneration is controlled in the sense that the induced metric becomes bounded when rescaled by $r^{-1}$. The following definition makes this notion precise.

\begin{definition}\label{def: r^-1 bd geom}
    In $\C^n$ and $\C^{n+1}$, we define \textit{$r^{-1}$-bounded geometry} as follows.
    \begin{enumerate}
        \item For a submanifold $L\subset \C^n$, set $\Omega_R(L) = \{r\in (R,2R)\}\cap L$. We say $L$ has \textit{$r^{-1}$-bounded $C^k$-geometry} if the $C^k$-norm of the rescaled metric $R^{-2}g_L=g_{R^{-1}L}$ is uniformly bounded on the sets $\Omega_R(L)$ for all $R>0$. Here $g_L$ is the induced metric on $L$, and $g_{R^{-1}L}$ is the induced metric on the rescaled submanifold $R^{-1}L$.
        \item For a submanifold $X\subset \C^{n+1}$, set $\Omega_{R,S}(X) = \{r\in (R,2R),\ \rho\in(S,2S)\}\cap X$. We say $X$ has \textit{$r^{-1}$-bounded $C^k$-geometry} if the $C^k$-norm of $R^{-2}g_X=g_{R^{-1}X}$ is uniformly bounded on $\Omega_{R,S}(X)$ for all $R,S>0$.
    \end{enumerate}
\end{definition}

The term \textit{$r^{-1}$-bounded geometry} reflects the fact that the induced geometry of $R^{-1}X$ is uniformly bounded. For brevity, we often write $\Omega_{R,S}=\Omega_{R,S}(X)$ when the submanifold $X$ is clear from context.

Besides trivial examples such as $C\times\R$, the following $X_0$ has $r^{-1}$-bounded geometry. $X_0$ serves as a baby model for part of the approximate solution $X$ that we construct in the next section (see Section \ref{subsec: gen pic} for more discussion).

\begin{example}\label{exm: X_0}
    Fix integer $a>1$ and let $L$ be a fixed smoothing asymptotic to $C$. Define $X_0 = \bigcup_{u\in\R} |u|^aL\times \{w=u\}\subset \C^{n+1}$ where $|u|^aL$ denotes the submanifold $L$ scaled by the factor $|u|^a$. In other words, for each $u\in\R$, the cross-section of $X_0$ in the slice $\{w = u\} \simeq \C^n$ equals $|u|^aL$ when $u\neq 0$, and equals the cone $C$ when $u=0$. Intuitively, $X_0$ has $r^{-1}$-bounded geometry since each of its $u$-cross-sections has $r^{-1}$-bounded geometry (see Figure \ref{fig_of_X0}).
\end{example}

\subsection{Weighted H\"{o}lder spaces}

On a submanifold $X$ with $r^{-1}$-bounded geometry, we wish to characterize the asymptotic behavior of a $C^{k,\beta}$-function near the origin relative to a weight function of the form $r^{\tau}\rho^{-\tau+\delta}$. This leads to
the following definition.

\begin{definition}\label{def: doubly weighted}
    For $X\cap \rho^{-1}(0,A^{-1})$ with $r^{-1}$-bounded geometry, we define the $C^{k,\beta}_{\delta,\tau}$-norm of $\nu$ on $X\cap \rho^{-1}(0,A^{-1})$ by
    \begin{equation*}
        \|\nu\|_{C^{k,\beta}_{\delta,\tau}} = \sup_{R,S>0,\Omega_{R,S}\subset\rho^{-1}(0,A^{-1})} R^{-\tau}S^{\tau-\delta} \|\nu\|_{C^{k,\beta}_{R^{-2}g_X}(\Omega_{R,S})}.
    \end{equation*}
    Here the subscript $R^{-2}g_{X}$ indicates that we measure the H\"{o}lder norm using the uniformly bounded rescaled metric.
\end{definition}

As noted in Remark \ref{rmk: Lag nbhd for r^-1 bdd}, for a Lagrangian submanifold with $r^{-1}$-bounded geometry, we can define exact Lagrangian graphs for potential functions with suitable asymptotic control. The next proposition characterizes such functions via weighted H\"{o}lder spaces.

\begin{proposition}\label{prop: 2,2 can take graph}
    Let $X$ be a Lagrangian submanifold in $\C^{n+1}$ with $r^{-1}$-bounded $C^K$-geometry. There exists $c_0>0$ such that if $\|\nu\|_{C^{K+2,\beta}_{2,2}}=c<c_0$, then the Lagrangian graph $Y=\gr_X(d\nu)$ is well-defined and itself has $r^{-1}$-bounded $C^K$-geometry. Moreover, for every $k\leq K$,
    \[
        |\nabla^k_{g_Y}(\cF^*g_X-g_Y)| < \Psi(c)r^{-k}
    \]
    where $\cF: X\rightarrow Y$ is the nearest-point projection and $\Psi(c)\rightarrow 0$ as $c\rightarrow 0$.
\end{proposition}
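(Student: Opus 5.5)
The plan is to reduce everything to the bounded-geometry Lagrangian neighborhood theorem (Theorem \ref{thm: Lag nbhd}) by rescaling each region $\Omega_{R,S}$ by $R^{-1}$. Fix $R,S$ with $\Omega_{R,S}\subset\rho^{-1}(0,A^{-1})$ and set $\tilde X = R^{-1}X$. On the rescaled piece $R^{-1}\Omega_{R,S}$, and on a slightly enlarged neighborhood of it (say the union of adjacent $\Omega_{R',S'}$ with $R'\in[R/2,4R]$), the induced metric has $C^K$ bounds independent of $R,S$. This is exactly the meaning of $r^{-1}$-bounded geometry. The chosen Lagrangian neighborhoods also become uniformly bounded after this scaling, as noted in Remark \ref{rmk: Lag nbhd for r^-1 bdd}. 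So Theorem \ref{thm: Lag nbhd} applies to $\tilde X$ near $R^{-1}\Omega_{R,S}$. It gives a symplectomorphism $\Phi$ from a neighborhood $U$ of the zero section, and the size of $U$ and the bounds on $Q$ in \eqref{eqn: normal Lag nbhd} are uniform in $R,S$.

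Next, I would track how the potential transforms. If $Y=\gr_X(d\nu)$, then $R^{-1}Y=\gr_{\tilde X}(d\tilde\nu)$ with $\tilde\nu(x)=R^{-2}\nu(Rx)$. This is because the symplectic form scales by $R^2$, so a $1$-form of size $|d\nu|$ measured in $g_X$ becomes $R^{-1}|d\nu|$ in $g_{\tilde X}$, and the potential picks up $R^{-2}$. The norm $C^{k,\beta}_{2,2}$ has weight $R^{-2}S^{0}$, so on $\Omega_{R,S}$
\[
\|\tilde \nu\|_{C^{K+2,\beta}(\tilde g)} = R^{-2}\|\nu\|_{C^{K+2,\beta}_{R^{-2}g_X}(\Omega_{R,S})}\le c .
\]
Hence $\|d\tilde\nu\|_{C^{K+1,\beta}}\le c$ uniformly. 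For $c<c_0$ with $c_0$ chosen from the uniform size of $U$, the form $d\tilde\nu$ lies in $U$, so the graph is defined locally. Since the local graphs are defined by the same global $\Phi$ on overlaps, they patch together to a global $Y$. The patching is consistent because $\Phi$ is fiber-preserving and defined on all of $X$; the rescaling is only used for estimates.

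For the geometry of $Y$, I would use \eqref{eqn: normal Lag nbhd} in rescaled form: $\tilde\Phi(d\tilde\nu)[x]=x+J\nabla\tilde\nu+Q$, with $Q$ quadratic in $(d\tilde\nu,\nabla d\tilde\nu)$ and its derivatives bounded. The embedding of $R^{-1}Y$ is therefore a $C^{K}$ perturbation of size $O(c)$ of the embedding of $\tilde X$, measured in bounded coordinates. The induced metrics then satisfy $|\tilde\nabla^k(\tilde{\cF}^*g_{\tilde X}-g_{\tilde Y})|\le\Psi(c)$ for $k\le K$, where $\tilde{\cF}$ is the nearest-point projection. It is well defined and $C^{K}$-close to the graph parametrization for small $c$, by the bounded-geometry tubular neighborhood. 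Scaling back, the tensor $\cF^*g_X-g_Y$ is homogeneous of degree $2$ in the scaling, and each covariant derivative costs a factor $R^{-1}\sim r^{-1}$. This gives $|\nabla^k_{g_Y}(\cF^*g_X-g_Y)|<\Psi(c)r^{-k}$ with respect to the natural norm, and in particular $Y$ has $r^{-1}$-bounded $C^K$ geometry.

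I expect the main obstacle to be making sure the rescaled regions of $Y$ correspond to regions $\Omega_{R',S'}(Y)$ with comparable $R',S'$, so that bounded geometry of $Y$ is in the sense of Definition \ref{def: r^-1 bd geom}. This needs the displacement $|J\nabla\nu|\lesssim cR$ to be small relative to $r\sim R$ and $\rho\sim S\ge R$. For that I would use $r\le\rho$ and the smallness of $c$, so that points move by at most $\Psi(c)r$ and $r,\rho$ change by factors close to $1$. A secondary technical point is the loss of derivatives in $\Psi$ and in the $C^{K}$ control from $C^{K+2,\beta}$ potentials. This is accommodated because $Q$ involves only $\nu$ up to second derivatives, and $K$ is fixed and large.
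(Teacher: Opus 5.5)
Your proposal is correct and follows essentially the same route as the paper. Both arguments rescale each $\Omega_{R,S}$ by $R^{-1}$ and note that $\tilde\nu=R^{-2}\nu$, so the $C^{K+2,\beta}_{2,2}$-norm is exactly the uniform rescaled $C^{K+2,\beta}$-norm of $\tilde\nu$. Both then invoke the bounded-geometry Lagrangian neighborhood theorem with a uniform $c_0$, and scale the metric estimate back to pick up the factor $R^{-k}\sim r^{-k}$. Your extra remarks on patching the local graphs via the global $\Phi$, and on the comparability of $\Omega_{R',S'}(Y)$ with $\Omega_{R,S}(X)$ (displacement $\lesssim cR$), fill in details the paper leaves implicit.
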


\begin{proof}
    Since $X$ has $r^{-1}$-bounded $C^K$-geometry, the rescalings $(R^{-1}\Omega_{R,S},R^{-2}g_{X})$ have uniformly bounded $C^K$-geometry. Hence there exists $c_0>0$ such that for any function $\tilde \nu$ on $R^{-1}\Omega_{R,S}$ with
    \begin{equation}\label{eqn: tilde nu bdd}
        \|\tilde \nu\|_{C_{R^{-2}g_{X}}^{K+2,\beta}} < c_0,
    \end{equation}
    the Lagrangian graph of $d\tilde \nu$ is well-defined and has bounded geometry.

    Now, if $\nu$ satisfies $\|\nu\|_{C_{2,2}^{K+2,\beta}} = c < c_0$, then for every $\Omega_{R,S}$ we have
    \begin{equation}\label{eqn: R^2nu bdd}
        R^{-2}\|\nu\|_{C^{K+2,\beta}_{R^{-2}g_X}} \leq c < c_0.
    \end{equation}
    Under the scaling $x \mapsto R^{-1}x$, the $1$-form scales as $d_{g_{R^{-1}X}}\tilde \nu = R^{-1}d_{g_X}\nu$ so the potential function scales as $\tilde \nu = R^{-2}\nu$ (up to an additive constant which is set to $0$ by $|\nu| = o(R^2)\rightarrow 0$ as $R\rightarrow 0$). Hence, \eqref{eqn: R^2nu bdd} implies \eqref{eqn: tilde nu bdd}, the graph is well‑defined and has bounded geometry; scaling back shows that $Y$ is well-defined and has $r^{-1}$-bounded $C^K$-geometry.
    
    (2). The estimate for the difference of metrics follows from the fact that, in the rescaled picture which has bounded geometry, the $k$-th derivative of $\cF^*g_X - g_Y$ is controlled by the $(k+2)$-th derivatives of $\tilde\nu$, which are of order $c$. Scaling back introduces a factor $R^{-k}$, and because $r \sim R$ on $\Omega_{R,S}$, we obtain the desired bound with $\Psi(c) \to 0$ as $c \to 0$.
\end{proof}

\begin{corollary}\label{cor: close geom & preserve TC}
    Suppose $X\cap \rho^{-1}(0,A_0^{-1})$ has $r^{-1}$-bounded $C^K$-geometry and that there exist constants $C,\kappa>0$, independent of $A\geq A_0$, such that in $X\cap \rho^{-1}(0,A^{-1})$,
    \begin{equation}\label{eqn: 2,2 to delta,tau}
        r^{\tau} \rho^{\delta-\tau} \leq CA^{-\kappa}r^2.
    \end{equation}
    Let $\nu\in C_{\delta,\tau}^{K+2,\beta}\bigl(X\cap \rho^{-1}(0,A^{-1})\bigr)$ and set $Y=\gr_X (d\nu)$. Then
    \begin{enumerate}
        \item Given $\epsilon > 0$, for $A$ sufficiently large we have
        \[
            |\nabla^k_{g_Y}(\cF^*g_X-g_Y)| < \epsilon r^{-k},\quad k\leq K,
        \]
        where $\cF: X\rightarrow Y$ is the nearest-point projection.
        \item Tangent cone(s) is preserved: for any sequence $\rho_i\searrow 0$,
        \[
            \lim_{\rho_i\searrow 0} \rho_i^{-1}(Y) = \lim_{\rho_i\searrow 0} \rho_i^{-1}X.
        \]
    \end{enumerate}
\end{corollary}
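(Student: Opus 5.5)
The plan is to reduce everything to Proposition \ref{prop: 2,2 can take graph} by comparing the $C^{K+2,\beta}_{\delta,\tau}$-norm with the $C^{K+2,\beta}_{2,2}$-norm. We may assume $\nu$ has bounded norm, say $\|\nu\|_{C^{K+2,\beta}_{\delta,\tau}}\le 1$; otherwise we apply the argument to $\nu$ restricted to smaller neighborhoods, whose norm does not increase.

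\textbf{Norm comparison.} On any $\Omega_{R,S}\subset\rho^{-1}(0,A^{-1})$ we have $r\sim R$ and $\rho\sim S$. Hypothesis \eqref{eqn: 2,2 to delta,tau} gives $R^{\tau}S^{\delta-\tau}\le C A^{-\kappa}R^2$ up to constants depending on the dyadic comparison. Therefore
\[
R^{-2}\|\nu\|_{C^{K+2,\beta}_{R^{-2}g_X}(\Omega_{R,S})}\le R^{-2}R^{\tau}S^{\delta-\tau}\|\nu\|_{C^{K+2,\beta}_{\delta,\tau}}\le C A^{-\kappa}\|\nu\|_{C^{K+2,\beta}_{\delta,\tau}}.
\]
Since the sets $\Omega_{R,S}$ exhaust the region, this yields $\|\nu\|_{C^{K+2,\beta}_{2,2}}\le CA^{-\kappa}\|\nu\|_{C^{K+2,\beta}_{\delta,\tau}}$. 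For $A$ large this is smaller than $c_0$, so Proposition \ref{prop: 2,2 can take graph} shows that $Y$ is well defined with $r^{-1}$-bounded geometry. It also gives $|\nabla^k_{g_Y}(\cF^*g_X-g_Y)|<\Psi(CA^{-\kappa})r^{-k}$, and because $\Psi\to0$, choosing $A$ large proves (1).

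\textbf{Tangent cones.} By \eqref{eqn: normal Lag nbhd}, rescaled to $\Omega_{R,S}$, the displacement of a point of $X$ under the graph map is $|d\nu|$ plus quadratic terms. The displacement is therefore bounded by a constant times $R\cdot R^{-1}|d\nu|$, where $R^{-1}|d\nu|$ is measured in the rescaled picture. In the rescaled picture $|d\tilde\nu|\le \|\tilde\nu\|_{C^1}\le R^{-2}R^{\tau}S^{\delta-\tau}$, so the actual displacement is at most $C R^{-1}R^{\tau}S^{\delta-\tau}$. Applying \eqref{eqn: 2,2 to delta,tau} at every scale bounds this by $CA^{-\kappa}R\le CA^{-\kappa}\rho$, since $r\le\rho$.

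This is not yet enough: we need the displacement to be $o(\rho)$ as $\rho\to0$, not merely at most $\epsilon\rho$. To get this, note that the hypothesis holds uniformly for every $A\ge A_0$ with the same constants. Restricting to $\rho^{-1}(0,A'^{-1})$ for $A'>A$ therefore gives the estimate $R^{\tau}S^{\delta-\tau}\le CA'^{-\kappa}R^2$ there. Hence $\sup_{\rho<A'^{-1}}\rho^{-1}\dist(y,X)\to0$ as $A'\to\infty$, and consequently $\rho_i^{-1}Y$ and $\rho_i^{-1}X$ lie within Hausdorff distance $o(1)$ of each other on compact subsets of the unit annulus. Because the graph map is a bijection between $X$ and $Y$, the same holds for the converse inclusion, so the subsequential limits agree (as varifolds, using the metric closeness from (1) to match multiplicities/mass). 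This proves (2).

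The main obstacle is this last step. We must use the $A$-independence of the constants in \eqref{eqn: 2,2 to delta,tau} to upgrade ``$\epsilon$-close at scale $\rho$'' to ``$o(\rho)$-close''. We must also make sure the passage to varifold convergence is justified, which the bound in (1) handles by giving uniform control of the area density.
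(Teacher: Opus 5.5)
Your proposal is correct and follows the paper's proof. The comparison $\|\nu\|_{C^{K+2,\beta}_{2,2}}\le CA^{-\kappa}\|\nu\|_{C^{K+2,\beta}_{\delta,\tau}}$ reduces (1) to Proposition \ref{prop: 2,2 can take graph}, and the same comparison gives $\rho^{-1}|d\nu|\le CA^{-\kappa}$ for (2). Your explicit use of the $A$-independence of the constants to get $o(\rho)$ displacement, and your remark on matching varifold multiplicities, spell out what the paper's ``$\to 0$ as $A\to\infty$'' leaves implicit.
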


\begin{proof}
    By the definition of weighted spaces, condition \eqref{eqn: 2,2 to delta,tau}  implies the following comparison of weighted norms:
    \begin{equation}\label{eqn: 2,2 bdd by delta,tau}
        \|\nu\|_{C_{2,2}^{K+2,\beta}} \leq CA^{-\kappa}\|\nu\|_{C_{\delta,\tau}^{K+2,\beta}}.
    \end{equation}
    Taking $A$ large enough makes $\|\nu\|_{C_{2,2}^{K+2,\beta}}$ arbitrarily small so part (1) directly follows from Proposition \ref{prop: 2,2 can take graph}. For part (2), the comparison of weighted norms \eqref{eqn: 2,2 bdd by delta,tau} implies
    \[
        \rho^{-1}|d\nu| \leq \|d\nu\|_{C_{1,1}^{1,\beta}} \leq \|\nu\|_{C_{2,2}^{2,\beta}} \leq CA^{-\kappa}\|\nu\|_{C_{\delta,\tau}^{2,\beta}} \leq CA^{-\kappa}\rightarrow 0\quad \text{as }A\to \infty,
    \]
    which guarantees the tangent cone(s) of $\gr_X(d\nu)$ coincide with those of $X$.
\end{proof}

Next we note that an $n$-th order differential operator determined by the local geometry of $X$ defines bounded maps from $C^{k+n,\beta}_{\delta,\tau}$ to $C^{k,\beta}_{\delta-n,\tau-n}$. In particular, the Lagrangian angle operator enjoys the following properties:

\begin{proposition}\label{prop: bdd op & Lip est}
    Suppose $X\cap \rho^{-1}(0,A_0^{-1})$ has $r^{-1}$-bounded $C^K$-geometry and take $\delta,\tau$ as in corollary \ref{cor: close geom & preserve TC}. Then for $A\geq A_0$ sufficiently large:
    \begin{enumerate}
        \item The Lagrangian angle operator and the Laplacian on $X$ define bounded linear operators
        \[
            LA_{X},\Delta_{X}: C^{k+2,\beta}_{\delta,\tau}(X\cap \rho^{-1}(0,A^{-1})) \rightarrow C^{k,\beta}_{\delta-2,\tau-2}(X\cap \rho^{-1}(0,A^{-1})).
        \]
        \item Moreover, $LA_X$ can be written as
        \begin{equation}\label{eqn: form of LA_X}
            LA_{X}(\nu)[x] = \theta_X + \Delta_{X}\nu(x) + Q_{X}\bigl(x,\nu,\nabla \nu\bigr),
        \end{equation}
        where the nonlinear part $Q_{X}$ satisfies the Lipschitz estimate,
        \[
            \|Q_{X}(\nu_1) - Q_{X}(\nu_2)\|_{C^{k,\beta}_{\delta-2,\tau-2}} \leq C(\|\nu_1\|_{C^{k+2,\beta}_{2,2}} + \|\nu_2\|_{C^{k+2,\beta}_{2,2}})\|\nu_1-\nu_2\|_{C^{k+2,\beta}_{\delta,\tau}}
        \]
        for all $\nu_1,\nu_2\in C^{k+2,\beta}_{\delta,\tau}(X\cap \rho^{-1}(0,A^{-1}))$.
    \end{enumerate}
\end{proposition}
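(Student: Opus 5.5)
The plan is to reduce everything to the rescaled pieces $R^{-1}\Omega_{R,S}$, where the geometry is uniformly bounded, and then transfer the bounds back using the scaling behaviour of the weights. Fix $R,S$ with $\Omega_{R,S}\subset\rho^{-1}(0,A^{-1})$ and write $\tilde\nu=R^{-2}\nu\circ(R\,\cdot)$ on $\tilde\Omega=R^{-1}\Omega_{R,S}$, with the metric $R^{-2}g_X$. As in the proof of Proposition \ref{prop: 2,2 can take graph}, the Laplacian scales as $\Delta_{R^{-2}g_X}\tilde\nu=\Delta_{g_X}\nu$. By the $r^{-1}$-bounded geometry, the coefficients of $\Delta$ in the rescaled metric have uniformly bounded $C^{k,\beta}$ norms, independent of $R,S$. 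Hence
\[
\|\Delta_X\nu\|_{C^{k,\beta}_{R^{-2}g_X}(\Omega_{R,S})}\le C\|\nu\|_{C^{k+2,\beta}_{R^{-2}g_X}(\Omega_{R,S})}\cdot R^{-2},
\]
since the unscaled norm of $\nu$ equals $R^{2}$ times the norm of $\tilde\nu$. We work on slightly enlarged overlapping regions, which are covered by boundedly many neighbouring $\Omega_{R',S'}$ with $R'\sim R$ and $S'\sim S$, so the weights are comparable. Multiplying by $R^{-(\tau-2)}S^{(\tau-2)-(\delta-2)}=R^{2}\cdot R^{-\tau}S^{\tau-\delta}$ and taking the supremum over $R,S$ gives boundedness $C^{k+2,\beta}_{\delta,\tau}\to C^{k,\beta}_{\delta-2,\tau-2}$. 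Note that the $S$-power is unchanged, because the shift is $-2$ in both indices. The same argument applies to any second order operator whose coefficients are controlled in the rescaled picture.

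For $LA_X$, we first use Corollary \ref{cor: close geom & preserve TC}: for $A$ large, $\|\nu\|_{C^{K+2,\beta}_{2,2}}$ is small whenever $\nu$ lies in a fixed ball of $C_{\delta,\tau}$, so the graph is defined. On each rescaled piece we then apply Proposition \ref{prop:Lag ang op} in the bounded geometry setting. The angle function is scale invariant, so
\[
LA_X(\nu)=\theta_X+\Delta_X\nu+Q_X,\qquad Q_X(x)=\tilde Q\bigl(x,d\tilde\nu,\nabla d\tilde\nu\bigr),
\]
where $\tilde Q$ is smooth, with all derivatives in $x$ bounded uniformly in $R,S$ (this uses the uniform choice of Lagrangian neighborhoods noted in Remark \ref{rmk: Lag nbhd for r^-1 bdd}), and $\tilde Q$ vanishes to second order at $(y,z)=(0,0)$.

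The Lipschitz estimate comes from the mean value theorem for such a quadratic $\tilde Q$:
\[
\|\tilde Q(\tilde\nu_1)-\tilde Q(\tilde\nu_2)\|_{C^{k,\beta}}\le C\bigl(\|\tilde\nu_1\|_{C^{k+2,\beta}}+\|\tilde\nu_2\|_{C^{k+2,\beta}}\bigr)\|\tilde\nu_1-\tilde\nu_2\|_{C^{k+2,\beta}},
\]
valid while the norms stay below $c_0$. The first factor is at most $\|\nu_i\|_{C^{k+2,\beta}_{2,2}}$ by definition of the $(2,2)$ weight, since the rescaled norm of $\tilde\nu_i$ is $R^{-2}$ times the unscaled one. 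The last factor, after multiplying by $R^{-\tau}S^{\tau-\delta}$, is bounded by $\|\nu_1-\nu_2\|_{C^{k+2,\beta}_{\delta,\tau}}\cdot R^{-2}$. Since $Q_X$ is an angle (scale invariant), passing from $\tilde Q$ to $Q_X$ introduces no further power. The weight $R^{-(\tau-2)}S^{\tau-\delta}$ on the left then exactly absorbs this $R^{-2}$, and taking the supremum gives the claim.

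I expect the main obstacle to be justifying the uniformity of $\tilde Q$ and its derivatives across all rescaled pieces. This requires that the Lagrangian neighborhood map $\Phi$ of Theorem \ref{thm: Lag nbhd}, transported to $R^{-1}X$, has $C^K$ bounds independent of $(R,S)$. For the normal tubular neighbourhood this follows from the bounded second fundamental form of $R^{-1}X$, which is given by the $r^{-1}$-bounded $C^K$-geometry. A secondary point is bookkeeping: the smallness radius $c_0$ is uniform, and the comparison \eqref{eqn: 2,2 bdd by delta,tau} ensures it is met for $A$ large.
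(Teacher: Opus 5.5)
Your proposal is correct and follows essentially the same route as the paper. Both rescale each $\Omega_{R,S}$ by $R^{-1}$ to bounded geometry and use the second-order scaling to shift both weights by $-2$. Both then obtain the Lipschitz bound from the quadratic vanishing of the rescaled nonlinearity: the $(2,2)$-norm controls $\|\tilde\nu_i\|$, and the factor $R^{-2}$ coming from $\tilde\nu_1-\tilde\nu_2$ is absorbed by the target weight. Using the mean value theorem instead of the paper's power-series argument is an inessential difference.
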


\begin{proof}
    (1). Fix a function $\nu\in C^{k+2,\beta}_{\delta,\tau}$. By Corollary \ref{cor: close geom & preserve TC}, the graph of $d\nu$ is well-defined for $A$ sufficiently large. On all $\Omega_{R,S}$,
    \begin{equation}\label{eqn: delta, tau norm leq 1}
        R^{-\tau}S^{\tau-\delta} 
        \|\nu\|_{C^{k+2,\beta}_{g_{R^{-1}X}}(\Omega_{R,S})} \leq \|\nu\|_{C^{k+2,\beta}_{\delta,\tau}}.
    \end{equation}
    We know that $g_{R^{-1}X}$ is uniformly bounded on all $R^{-1}\Omega_{R,S}$, so there exists $C>0$ independent of $R,S$ such that
    \begin{equation}\label{eqn: LA bdd on rescaling}
        R^{-\tau}S^{\tau-\delta} \|LA_{R^{-1}X} \nu\|_{C^{k,\beta}_{g_{R^{-1}X}}(\Omega_{R,S})} \leq CR^{-\tau}S^{\tau-\delta} \|\nu\|_{C^{k+2,\beta}_{g_{R^{-1}X}}(\Omega_{R,S})}.
    \end{equation}
    The Lagrangian angle operator is a second order differential operator, so under the scaling $x \mapsto R^{-1}x$, it scales as $R^{2}LA_X = LA_{R^{-1}X}$; hence
    \begin{equation}\label{eqn: scaling of LA}
        R^{2-\tau}S^{\tau-\delta} \|LA_{X} \nu\|_{C^{k,\beta}_{R^{-2}g_X}(\Omega_{R,S})} \leq R^{-\tau}S^{\tau-\delta} \|LA_{R^{-1}X} \nu\|_{C^{k,\beta}_{g_{R^{-1}X}}(\Omega_{R,S})}
    \end{equation}
    Following \eqref{eqn: scaling of LA}, \eqref{eqn: LA bdd on rescaling}, \eqref{eqn: delta, tau norm leq 1}, and taking supremum over all $R,S$, we have
    \[
        \|LA_{X} \nu\|_{C^{k,\beta}_{\delta-2,\tau-2}} \leq C
        \|\nu\|_{C^{k+2,\beta}_{\delta,\tau}},
    \]
    and the same argument applies to $\Delta_X$. 
    
    (2). The expansion \eqref{eqn: form of LA_X} follows from McLean’s theorem (Theorem \ref{thm: McL}) and the fact that $X$ has bounded geometry after rescaling. For the Lipschitz estimate, we again work on the rescaled pieces $R^{-1}\Omega_{R,S}$ with  $\tilde \nu = R^{-2}\nu$. Here,
    \[
        LA_{R^{-1}X}(\tilde\nu) = \theta_{R^{-1}X} + \Delta_{R^{-1}X}(\tilde \nu) + Q_{R^{-1}X}(\tilde\nu,\nabla_{R^{-1}X} (\tilde\nu)),
    \]
    where for $\|\tilde\nu\|_{C^{k+2,\beta}_{g_{R^{-1}X}}}$ sufficiently small, $Q_{R^{-1}X}$ is a convergent power series with at least degree-two terms. Indeed, $\|\tilde\nu_i\|_{C^{k+2,\beta}_{g_{R^{-1}X}}}=R^{-2}\|\nu_i\|_{C^{k+2,\beta}_{g_{R^{-1}X}}}$ is small by \eqref{eqn: 2,2 bdd by delta,tau}; hence $Q_{R^{-1}X}(\tilde \nu_1) - Q_{R^{-1}X}(\tilde \nu_2)$ is a convergent power series in which every term has a factor of $(\tilde\nu_1-\tilde\nu_2)$ or its derivative. It follows that
    \begin{equation*}
    \begin{split}
        \|Q_{R^{-1}X}(\tilde\nu_1) - Q_{R^{-1}X}(\tilde \nu_2)\|_{C^{k,\beta}_{g_{R^{-1}X}}}\leq C\|\tilde \nu_1-\tilde \nu_2\|_{C^{k+2,\beta}_{g_{R^{-1}X}}}\bigl(\|\tilde \nu_1\|_{C^{k+2,\beta}_{g_{R^{-1}X}}} + \|\tilde \nu_2\|_{C^{k+2,\beta}_{g_{R^{-1}X}}}\bigr).
    \end{split}
    \end{equation*}
    Scaling back and using the relation $Q_X(\nu) = Q_{R^{-1}X}(\tilde \nu)$ yields the desired estimate after taking weighted suprema.
\end{proof}

\subsection{The perturbation}

We can now prove the main perturbation proposition.

\begin{proposition}\label{prop: main pert}
    Suppose there exists a Lagrangian submanifold $X$ in $\C^{n+1}$ and $A_0\gg 1$ satisfying the following conditions:
    \begin{enumerate}
        \item $X\cap \rho^{-1}(0,A_0^{-1})$ is smooth away from the origin and has unique tangent cone $C\times\R$ at the origin.
        \item $X\cap \rho^{-1}(0,A_0^{-1})$ has $r^{-1}$-bounded $C^K$-geometry.
        \item There exist constants $\delta,\tau\in\R$, $C>0$, $\kappa > 0$ such that for every $A\geq A_0$ sufficiently large,
        \begin{enumerate}
            \item[(i)] In $X\cap \rho^{-1}(0,A^{-1})$, 
            \[
                \|\theta_{X}\|_{C^{0,\beta}_{0,0}} \leq CA^{-\kappa}r^{\tau-2} \rho^{\delta-\tau}\quad \text{and}\quad r^{\tau} \rho^{\delta-\tau}\leq CA^{-\kappa}r^2.
            \]
            \item[(ii)] The bounded linear operator
            \[
                \Delta_X: C_{\delta,\tau}^{2,\beta}(X\cap \rho^{-1}(0,A^{-1})) \rightarrow C_{\delta-2,\tau-2}^{0,\beta}(X\cap \rho^{-1}(0,A^{-1}))
            \]
            has a right inverse $P$, bounded independently of $A$.
        \end{enumerate}
    \end{enumerate}
    Then for $A$ sufficiently large, there exists a perturbation $Y$ of $X$ by an exact $1$-form with potential in $C_{\delta,\tau}^{2,\beta}$ such that $Y\cap \rho^{-1}(0,A^{-1})$ is a special Lagrangian submanifold, smooth away from the origin, with unique tangent cone $C\times\R$ at the origin.
\end{proposition}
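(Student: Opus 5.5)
We look for $Y=\gr_X(d\nu)$ with $\nu\in C^{2,\beta}_{\delta,\tau}(X\cap\rho^{-1}(0,A^{-1}))$ solving $LA_X(\nu)=0$. By Proposition \ref{prop: bdd op & Lip est}(2) this equation reads $\theta_X+\Delta_X\nu+Q_X(\nu)=0$. Using the right inverse $P$ from (3)(ii), we write $\nu=P f$ and reduce to the fixed point problem
\[
    f=\mathcal N(f):=-\theta_X-Q_X(Pf)
\]
on a small ball $B_\eta=\{\|f\|_{C^{0,\beta}_{\delta-2,\tau-2}}\leq\eta\}$. Note that $\Delta_X Pf=f$, so a fixed point gives $LA_X(Pf)=0$.

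\textbf{Step 1: size of the error.} The first inequality in (3)(i) says that on each $\Omega_{R,S}$ the rescaled H\"older norm of $\theta_X$ is at most $CA^{-\kappa}R^{\tau-2}S^{\delta-\tau}$. Hence $\|\theta_X\|_{C^{0,\beta}_{\delta-2,\tau-2}}\leq CA^{-\kappa}$. One small point needs checking: $r$ and $\rho$ on $\Omega_{R,S}$ are comparable to $R,S$, so the pointwise weight can be replaced by $R^{\tau-2}S^{\delta-\tau}$ up to a constant.

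\textbf{Step 2: contraction.} The second inequality in (3)(i) is exactly \eqref{eqn: 2,2 to delta,tau}. So Corollary \ref{cor: close geom & preserve TC} and Proposition \ref{prop: bdd op & Lip est} apply, and for $\nu_i=Pf_i$ we have $\|\nu_i\|_{C^{2,\beta}_{2,2}}\leq CA^{-\kappa}\|\nu_i\|_{C^{2,\beta}_{\delta,\tau}}\leq CA^{-\kappa}\eta$. The Lipschitz estimate then gives
\[
    \|\mathcal N(f_1)-\mathcal N(f_2)\|_{C^{0,\beta}_{\delta-2,\tau-2}}\leq C\|P\|^2A^{-\kappa}\eta\,\|f_1-f_2\|,
\]
and the same estimate with $f_2=0$ (using $Q_X(0)=0$) gives $\|\mathcal N(f)\|\leq CA^{-\kappa}+C A^{-\kappa}\eta^2$. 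Take $\eta=1$ (or any fixed constant). For $A$ large, with $\|P\|$ independent of $A$, the map $\mathcal N$ sends $B_\eta$ to itself and is a contraction. Banach's fixed point theorem then yields $f$, and we set $\nu=Pf$ with $\|\nu\|_{C^{2,\beta}_{\delta,\tau}}\leq C$. The graph of $d\nu$ is well-defined by Proposition \ref{prop: 2,2 can take graph}, since its $C^{2,\beta}_{2,2}$ norm is $O(A^{-\kappa})$.

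There is a regularity subtlety. Proposition \ref{prop: 2,2 can take graph} is stated with $C^{K+2,\beta}$ norms, but the contraction is run in $C^{2,\beta}$. I would either run the same argument in $C^{K+2,\beta}$, assuming $P$ is bounded there, or better, argue by local elliptic regularity. On each rescaled piece $R^{-1}\Omega_{R,S}$, the equation $LA_X(\nu)=0$ is a uniformly elliptic fully nonlinear equation with small $C^{2,\beta}$ solution $\tilde\nu=R^{-2}\nu$ on a bounded-geometry background. Schauder bootstrapping then gives $C^{K+2,\beta}$ bounds on slightly smaller pieces with the same weights, and these pieces cover everything. I expect this bookkeeping, namely matching the regularity needed for graphicality with the space where $P$ is controlled, to be the main obstacle; the contraction itself is routine.

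\textbf{Step 3: conclusions.} $Y=\gr_X(d\nu)$ is Lagrangian since $d\nu$ is exact (hence closed), and its angle vanishes, so it is special Lagrangian. It is smooth away from $0$ because $X$ is, and by elliptic regularity of the special Lagrangian equation (or the bootstrapping above). Corollary \ref{cor: close geom & preserve TC}(2) gives that every blow-up sequence of $Y$ has the same limit as that of $X$. By hypothesis (1) this limit is $C\times\R$, so the tangent cone of $Y$ at $0$ is unique and equal to $C\times\R$.
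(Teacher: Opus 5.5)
Your proposal is correct and is essentially the paper's argument: a contraction mapping built from the right inverse $P$ of (3)(ii), the error bound $\|\theta_X\|_{C^{0,\beta}_{\delta-2,\tau-2}}\leq CA^{-\kappa}$ from (3)(i), and the Lipschitz estimate of Proposition \ref{prop: bdd op & Lip est} combined with the weight comparison \eqref{eqn: 2,2 bdd by delta,tau}, followed by Corollary \ref{cor: close geom & preserve TC}(2) for the tangent cone. The differences are cosmetic. You iterate on $f=\Delta_X\nu$ in the target space over a fixed-radius ball, whereas the paper iterates on $\nu$ over a ball of radius $A^{-\kappa/2}$. Your elliptic-regularity remark also spells out the $C^{2,\beta}$ versus $C^{K+2,\beta}$ point that the paper dismisses with ``follows from construction.''
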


\begin{remark}
    Combining two inequalities in (3)(i) implies that $\|\theta_{X}\|_{C^{0,\beta}_{0,0}}\leq CA^{-\kappa}$ for some $\kappa>0$. Recall that the H\"{o}lder norm is computed using $R^{-2}g_X$, so this means $\theta_X$ decays at the origin under a blowup to bounded geometry. For those familiar with mean curvature flow, this is analogous to saying the ``Type II blowup" of $X$ at the singularity is special Lagrangian, which is stronger than the fact that the tangent cone is special Lagrangian.
\end{remark}

\begin{remark}
    In (3)(i), condition $r^{\tau} \rho^{\delta-\tau}\leq CA^{-\kappa}r^2$ on $\delta,\tau$ ensures a perturbation in this weight is small enough such that it is well-defined in a neighborhood of the origin and preserves the tangent cone(s) (Corollary \ref{cor: close geom & preserve TC}). Condition $\|\theta_{X}\|_{C^{0,\beta}_{0,0}}\leq CA^{-\kappa}r^{\tau-2} \rho^{\delta-\tau}$ together with (3)(ii) means a perturbation in this weight is significant enough to kill the Lagrangian angle.
\end{remark}

\begin{proof}
    For $A$ sufficiently large, define the ball
    \[
        \cB_A = \{\nu\in C^{2,\beta}_{\delta,\tau}(X\cap \rho^{-1}(0,A^{-1})): \|\nu\|_{C^{2,\beta}_{\delta,\tau}} \leq A^{-\kappa/2}\}.
    \]
    We use a contraction mapping argument to show the Lagrangian angle equation
    \[
        LA_{X}(\nu) = \theta_X + \Delta_{X}(\nu) + Q_{X}(\nu) = 0
    \]
    has a solution in $\cB_A$. By condition (3)(ii), $\Delta_X$ has a right inverse $P$ bounded independently of $A$. Define the map
    \[
        \cN(\nu) = -P(\theta_X + Q_X(\nu)).
    \]
    If $\nu$ is a fixed point of $\cN$, then
    \begin{equation*}
    \begin{split}
        LA_X(\nu) &= \theta_X + \Delta_X(\nu) + Q_X(\nu) = \theta_X + Q_X(\nu) + \Delta_X(\cN(\nu))\\
        &= \theta_X + Q_X(\nu) + \Delta_X(-P(\theta_X + Q_X(\nu))) = 0
    \end{split}
    \end{equation*}
    i.e. $\nu$ solves $LA_X(\nu)=0$. We now show that for $A$ large enough, $\cN$ is a contraction mapping of $\cB_A$ into itself. First, using the boundedness of $P$ and the assumption on $\theta_X$ from condition (3)(i), we have for $A$ sufficiently large,
    \begin{equation}\label{eqn: stab of 0 under cN}
         \|\cN(0)\|_{C^{2,\beta}_{\delta,\tau}} = \|P(\theta_X)\|_{C^{2,\beta}_{\delta,\tau}} = C\|\theta_X\|_{C^{0,\beta}_{\delta-2,\tau-2}} \leq CA^{-\kappa} \leq \frac{1}{4}A^{-\kappa/2}.
    \end{equation}
    Using the Lipschitz estimate (Proposition \ref{prop: bdd op & Lip est} (2)) and weight comparison \eqref{eqn: 2,2 bdd by delta,tau}, we have for $A$ sufficiently large,
    \begin{equation}\label{eqn: cN contract}
    \begin{split}
         \|\cN(\nu_1)-\cN(\nu_2)\|_{C^{2,\beta}_{\delta,\tau}} &\leq C\|Q(\nu_1)-Q(\nu_2)\|_{C^{2,\beta}_{\delta-2,\tau-2}}\\
         &\leq C(\|\nu_1\|_{C^{2,\beta}_{2,2}} + \|\nu_2\|_{C^{2,\beta}_{2,2}})\|\nu_1-\nu_2\|_{C^{2,\beta}_{\delta,\tau}}\\
         &\leq CA^{-\kappa}(\|\nu_1\|_{C^{2,\beta}_{\delta,\tau}} + \|\nu_2\|_{C^{2,\beta}_{\delta,\tau}})\|\nu_1-\nu_2\|_{C^{2,\beta}_{\delta,\tau}}\\
         &\leq CA^{-3\kappa/2} \|\nu_1-\nu_2\|_{C^{2,\beta}_{\delta,\tau}}\\
         &\leq \frac{1}{2}\|\nu_1-\nu_2\|_{C^{2,\beta}_{\delta,\tau}}.
    \end{split}
    \end{equation}
    \eqref{eqn: cN contract} says $\cN$ is a contraction, and together with \eqref{eqn: stab of 0 under cN} implies $\cN$ maps $\cB_A$ into itself. By the contraction mapping theorem, $\cN$ has a fixed point $\nu_0\in \cB_A$. Then $Y = \gr_X(d\nu_0)$ is an exact Lagrangian submanifold in $\rho^{-1}(0,A^{-1})$ satisfying $LA_X(\nu_0)=0$; hence, $Y$ is special Lagrangian. Smoothness of $Y$ away from the origin follows from construction, and Corollary \ref{cor: close geom & preserve TC} guarantees that $Y$ has unique tangent cone $C\times\R$ at the origin.
\end{proof}

\section{Constructing the approximate solution}\label{sec: approx sol}

In this section, we prove the main theorem \ref{thm: main}. It suffices to construct an approximate solution $X$ in a neighborhood of the origin that satisfies all the conditions in Proposition \ref{prop: main pert}. We dedicate the rest of the section to this construction.

\subsection{General picture}\label{subsec: gen pic}
Pick an integer $a > 1$. Intuitively, the success of the perturbation scheme means the first-order geometry of the special Lagrangian $Y$ near the origin should already be characterized by $X$. Thus, outside a small region near the singularity axis $\R_u$ of $C\times\R$, the first-order behavior of $X$ is determined by a homogeneous harmonic function of degree $>2$ over the tangent cone $C\times\R$.

\begin{itemize}
    \item[\textbf{Step 1:}] Define $X_1$ as the Lagrangian graph of $d\varphi_a$ over $C\times \R$, where $\varphi_a$ is the harmonic polynomial defined in \eqref{eqn: def varphi_a} (see Section \ref{subsec: harmonic graph}).
\end{itemize}

On the other hand, $C\times\R$ is singular along $\R_u$, while $X$ is smooth away from the origin, so the description of $X$ as a graph over $C\times\R$ necessarily breaks down in a small region near $\R_u$, where $X$ needs to be `smoothed out'. Naively, we would like to smooth out in each $u$-cross section by a scaling $u^aL$ of $L$, i.e., the submanifold $X_0$ defined in Example \ref{exm: X_0}. However, $X_0$ is not Lagrangian in $\C^{n+1}$, so we need to `correct' it to a Lagrangian submanifold.

\begin{itemize}
    \item[\textbf{Step 2:}] Define $X_2$ that coincides with $X_0$ in the first $n$-coordinates and the real part of the last coordinate. The imaginary part of the last coordinate is corrected such that $X_2$ is Lagrangian in $\C^{n+1}$ (see Section \ref{subsec: lag corr}).
\end{itemize}

Finally, we finish the construction of $X$ by gluing $X_1$ and $X_2$ in the region $\{|u|^b\leq r \leq 2|u|^b\}$ for some $b\in (1,a)$. $b>1$ corresponds to the fact that $\{r \geq 2|u|^b\}$, the region of the harmonic graph description, dominates $X$ when approaching the origin. $b<a$ corresponds to the fact that the interpolating region separates from the smoothing region when approaching the origin. This guarantees $X_2$ is graphical over $C\times\R\cap\{r \leq 2|u|^b\}$, allowing interpolation of $X_1$ and $X_2$ as graphs over $C\times\R$. We also crucially rely on the fact that the dominant terms of $\varphi_a$ and $G$ agree in $\{r\sim |u^b|\}$. Intuitively, $\varphi_a$ can be viewed as a natural `harmonization' of the leading term of $G$ (by same-order terms), and $a$ is required to be an integer for this harmonization to be a polynomial and smooth everywhere.

\begin{itemize}
    \item[\textbf{Step 3:}] Define $X$ as $X_1$ in $\{r \geq 2|u|^b\}$, as $X_2$ in $\{r \leq |u|^b\}$, and as the interpolation of $X_1$ and $X_2$ as graphs over $C\times\R$ in $\{|u|^b\leq r \leq 2|u|^b\}$.
\end{itemize}

Notice that we only define $X$ in a neighborhood of the origin $\rho^{-1}(0,A^{-1})$ where $A$ is a sufficiently large constant.

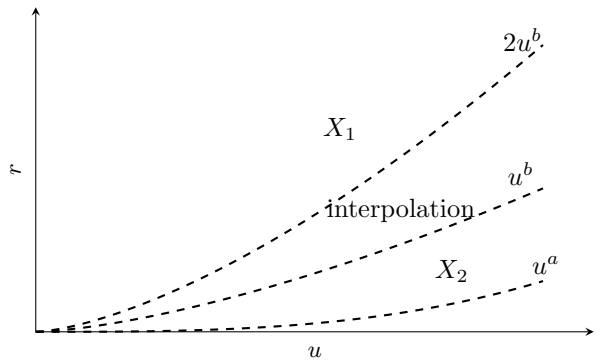
\begin{figure}[h]
\centering
\begin{tikzpicture}
\begin{axis}[
    xlabel={$u$},
    ylabel={$r$},
    xmin=0, xmax=0.55,
    ymin=0, ymax=0.8,
    axis lines=left,          % only left and bottom axes
    grid=none,                % no grid
    xtick=\empty,             % remove tick marks and numbers on x‑axis
    ytick=\empty,             % remove tick marks and numbers on y‑axis
    width=9cm, height=5.9cm,    % make the plot area a square
    clip=false,               % allow labels near edges if needed
    tick label style={font=\small},
    label style={font=\small},
]
% Plot u^a (solid line)
\addplot[domain=0:0.5, samples=100, smooth, thick, dashed] {x^3};

% Plot u^b (dashed line)
\addplot[domain=0:0.5, samples=100, smooth, thick, dashed] {x^(1.5)};

% Plot 2u^b (dotted line)
\addplot[domain=0:0.5, samples=100, smooth, thick, dashed] {2*x^(1.5)};

% Labels for the curves (placed near the right end)
\node at (axis cs:0.48,0.12) [anchor=south west] {$u^a$};
\node at (axis cs:0.5,0.34) [anchor=south east] {$u^b$};
\node at (axis cs:0.48,0.67) [anchor=south]      {$2u^b$};

% Region labels
\node at (axis cs:0.41,0.15) {$X_2$};                    % between u^a and u^b
\node at (axis cs:0.36,0.3) {interpolation};            % between u^b and 2u^b
\node at (axis cs:0.3,0.5)  {$X_1$};                    % above 2u^b

\end{axis}
\end{tikzpicture}
\caption{$X$ in different regions}
\label{X in diff regions}
\end{figure}

\subsection{Scaling analysis}

We first introduce a variation of the classical $\Psi$-notation.

\begin{definition}
    Let $\Psi^p(\epsilon)$ denote a function decaying \textbf{polynomially} as $\epsilon\rightarrow 0$.
    \[
        \Psi^p(\epsilon) \leq C\epsilon^{\kappa}
    \]
    for some $C,\kappa > 0$ where $C,\kappa$ are independent of $\epsilon$ for sufficiently small $\epsilon$. The only relevant application of this notation is for $\epsilon = A^{-1}$,
    \[
        \Psi^p(A^{-1}) \leq CA^{-\kappa}
    \]
    for some $C,\kappa > 0$ where $C,\kappa$ are independent of $A$ for sufficiently large $A$.
\end{definition}

In the following sections, we will perform the following procedures.

\begin{itemize}
    \item Given a base special Lagrangian submanifold $X$ with $r^{-1}$-bounded geometry, we construct (or realize) $Y$ as a graph $Y=\gr_X(dF)$ for some $F:X\rightarrow \R$ in some region $\cU$. We derive estimates on $F$ and its derivatives to show the graph is well-defined.
    \item Using the derivative estimates on $F$ together with McLean's theorem, we estimate the Lagrangian angle of $Y$ in $\cU$.
    \item From the derivative estimates on $F$, we deduce that the geometry of $Y$ is close to that of $X$; consequently, $Y$ also has $r^{-1}$-bounded geometry in $\cU$.
\end{itemize}

Since all submanifolds involved possess $r^{-1}$-bounded geometry (see Definition \ref{def: r^-1 bd geom}) and we wish to estimate the Lagrangian angle in weighted H\"{o}lder spaces (which use the rescaled metric $R^{-2}g_X$), we will carry out the procedures above locally in the sets $\Omega_{R,S}$ under the rescaling by $R^{-1}$, which we denote by $R^{-1}\Omega_{R,S}$. This approach is natural in view of the geometry we are working with. When performing estimates in $R^{-1}(\Omega_{R,S}\cap \rho^{-1}(0,A^{-1}))$, all the constants that appear, $C,C_k,\kappa>0$, will be taken independent of $R,S$ for $A$ sufficiently large.

We now introduce notations for the rescaling by $R^{-1}$. Write
\[
    \tilde X = R^{-1}X
\]
and introduce rescaled coordinates
\[
    \tilde z = R^{-1}z,\quad \tilde r =|\tilde z| = R^{-1}|z| = R^{-1}r,\quad \tilde w = R^{-1}w.
\]
The corresponding rescaled metric is
\[
    R^{-2}g_X(z,w) = g_{\tilde X}(R^{-1}z,R^{-1}w) = g_{\tilde X}(\tilde z, \tilde w),
\]
and the covariant derivatives are denoted by
\[
    \nabla = \nabla_{g_X}, \quad \tilde \nabla = \nabla_{g_{\tilde X}}.
\]
If $Y=\gr_X(dF)$ is the graph of $dF$ over $X$ for a function $F:X\rightarrow \R$, we define the rescaled potential $\tilde F:\tilde X\rightarrow \R$ (locally in $R^{-1}\Omega_{R,S}$) by
\begin{equation}\label{eqn: def rescaled pot}
     \tilde F(\tilde z,\tilde w) = R^{-2}F(z,w),
\end{equation}
which implies
\begin{equation}\label{eqn: rescaled derivative}
    \tilde \nabla^k \tilde F(\tilde z,\tilde w) = R^{-2}\tilde \nabla^k F(z,w) = R^{-2}R^k \nabla^k F(R\tilde z,R\tilde w) = R^{k-2}\nabla^k F(z,w).
\end{equation}
Taking $k=1$, we get the desired rescaling of $Y$:
\[
    \tilde Y = \gr_{\tilde X}(d\tilde F) = R^{-1}\gr_X(dF) = R^{-1}Y.
\]

\begin{remark}\label{rmk: small tilde F close geom}
    The definition of $\tilde F$ can also be understood as follows:
    \begin{equation*}
    \begin{split}
        \|F\|_{C^{k,\beta}_{2,2}(X\cap\rho^{-1}(0,A^{-1}))} &= \sup_{R,S>0,\Omega_{R,S}\subset\rho^{-1}(0,A^{-1})} R^{-2} \|F\|_{C^{k,\beta}_{R^{-2}g_X}(\Omega_{R,S})}\\
        &= \sup_{R,S>0,\Omega_{R,S}\subset\rho^{-1}(0,A^{-1})} \|\tilde F\|_{C^{k,\beta}_{g_{\tilde X}}(\Omega_{R,S})}.
    \end{split}
    \end{equation*}
    Hence, by Proposition \ref{prop: 2,2 can take graph}, to show $Y=\gr_X(dF)$ is well-defined and has $C^K$-geometry arbitrarily close to $X$ in $\rho^{-1}(0,A^{-1})$, it suffices to show the first $K+2$ derivatives of $\tilde F$ are arbitrarily small for $A$ sufficiently large.
\end{remark}

\subsection{Harmonic graph $X_1$}\label{subsec: harmonic graph}

Recall that $a>1$ is an integer. Define a polynomial $\varphi_a$ on $C$ such that on each connected component $C_i = \cone(\Sigma_i)$ of $C$,
\begin{equation}\label{eqn: def varphi_a}
    \varphi_{a} = -\left(u^{2a} + \sum_{j=1}^{a} a_{j}u^{2a-2j}r^{2j}\right)c_{\infty,m},
\end{equation}
where $a_j$ are coefficients uniquely determined by the condition $\Delta_{C\times\R}\varphi_{a} = 0$.

\begin{lemma}\label{lem: est varphi_a}
    In all $R^{-1}\Omega_{R,S}$,
    \[
        |\tilde \nabla^k \tilde \varphi_{a}| \leq C_k S^{2a}R^{-2}(S^{-1}R)^k.
    \]
\end{lemma}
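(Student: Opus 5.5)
The plan is a direct computation in the rescaled picture, using the definition \eqref{eqn: def rescaled pot} of rescaled potentials and \eqref{eqn: rescaled derivative}, which give $\tilde\nabla^k\tilde\varphi_a = R^{k-2}\nabla^k\varphi_a$. It therefore suffices to prove the unscaled bound
\[
    |\nabla^k\varphi_a| \leq C_k S^{2a}S^{-k}\quad\text{on }\Omega_{R,S}
\]
for all $k$, with $C_k$ independent of $R,S$. Here $\nabla$ is taken with respect to the product conical metric $g_C+du^2$ on $C\times\R$.

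\textbf{Step 1.} Write $\varphi_a$ on each component $\cone(\Sigma_m)\times\R$ as a finite sum of monomials $c_{\infty,m}a_j u^{2a-2j}r^{2j}$, with $a_0=1$. Each monomial is homogeneous of degree $2a$ under the dilation $(z,u)\mapsto(tz,tu)$. This dilation is an isometry of $C\times\R$ up to the scaling of the metric by $t^2$.

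\textbf{Step 2.} The key observation is that $r^{2j}$ is a smooth function on $C$: it is the restriction of the ambient polynomial $|z|^{2j}$. Its covariant derivatives with respect to $g_C$ satisfy $|\nabla^l r^{2j}|\leq C r^{2j-l}$ for $l\leq 2j$. They vanish for $l>2j$ up to terms involving the second fundamental form of $C$, which is homogeneous of degree $-1$. More carefully, for every $l$ one has the uniform bound $|\nabla^l_{g_C} r^{2j}|\leq C_l r^{2j-l}$ by homogeneity, since $\nabla^l r^{2j}$ is homogeneous of degree $2j-l$ and bounded on the link.

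\textbf{Step 3.} By the Leibniz rule on the product, and since $u$ derivatives are trivial,
\[
    |\nabla^k(u^{2a-2j}r^{2j})|\leq C\sum_{p+l=k}|u|^{2a-2j-p}r^{2j-l}.
\]
This holds when $p\leq 2a-2j$; otherwise the corresponding term is zero. On $\Omega_{R,S}\cap\rho^{-1}(0,A^{-1})$ we have $|u|\leq\rho\leq 2S$ and $r\leq 2R\lesssim S$.

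\textbf{Main obstacle.} A naive bound would give factors like $r^{2j-l}$ with $l>2j$. Such negative powers of $r$ cannot be bounded by powers of $S$, because $R\ll S$ is allowed. So the naive homogeneity bound $|\nabla^k\varphi_a|\le C\rho^{2a-k}$ must be justified without the negative $r$ powers. This is where $a$ being an integer, and $\varphi_a$ being a genuine polynomial, matter.

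I would resolve this in one of two ways. The first is to note that $\varphi_a$ is the restriction to $C\times\R$ of the ambient polynomial $P(z,u)$ obtained by replacing $r^2$ with $|z|^2$. The intrinsic covariant derivatives of a restriction are then expressed through ambient derivatives $D^iP$, contracted with derivatives of the second fundamental form $\mathrm{II}$ of $C\times\R$. Here $|\nabla^q\mathrm{II}|\leq Cr^{-1-q}$.

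The second, more naturally, works entirely in the rescaled picture. There $\tilde r\sim 1$, the geometry of $R^{-1}\Omega_{R,S}$ is uniformly bounded, and $\tilde\nabla^k$ of $\tilde u^{2a-2j}\tilde r^{2j}$ is bounded by $C\sum_{p\le k}|\tilde u|^{2a-2j-p}$. This uses that $\tilde r^{2j}$ has bounded derivatives of all orders where $\tilde r\sim 1$. Since $|\tilde u|\lesssim S/R$ and $S/R\gtrsim 1$, each term is at most $C(S/R)^{2a}\le C S^{2a}R^{-2a}$.

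Converting back with $\tilde\varphi_a=R^{-2}\varphi_a(R\,\cdot)$ gives $|\tilde\nabla^k\tilde\varphi_a|\le C_kR^{-2}R^{2a}(S/R)^{2a-k'}$, where the worst case is $p=0$. Checking this against the claimed bound $S^{2a}R^{-2}(R/S)^k$ is the delicate point. In the region $r\sim R\ll S$ I expect it to hold because derivatives falling on $\tilde u$ gain a factor $R/S$. Derivatives falling on $\tilde r$, on the other hand, only appear multiplied by higher powers of $\tilde r$, i.e. by $R^{2j}$-type factors that are smaller. I would verify the monomial bookkeeping term by term.
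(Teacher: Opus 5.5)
\textbf{There is a genuine gap: the bound you reduce to is never proved.} Your reduction to the unscaled bound $|\nabla^k\varphi_a|\le C_kS^{2a-k}$ on $\Omega_{R,S}$, via $\tilde\nabla^k\tilde\varphi_a=R^{k-2}\nabla^k\varphi_a$, is exactly what the paper's two-case ``dominant term'' argument relies on. The obstacle you raise comes from an inaccurate claim in Step 2.

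The missing fact is an exact intrinsic identity on the metric cone $g_C=dr^2+r^2g_\Sigma$. Since $\nabla^2 r=r^{-1}(g_C-dr\otimes dr)$, one has $\nabla^2_{g_C}(r^2)=2g_C$. Hence $\nabla^3_{g_C}(r^2)=0$ exactly. There are no second fundamental form corrections, because these are intrinsic covariant derivatives. Writing $r^{2j}=(r^2)^j$ and applying the Leibniz rule, $\nabla^l r^{2j}$ is a sum of products of $\nabla^{l_i}(r^2)$ with $l_i\le 2$. This gives $|\nabla^l r^{2j}|\le Cr^{2j-l}$ for $l\le 2j$, and $\nabla^l r^{2j}\equiv 0$ for $l>2j$.

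The metric on $C\times\R$ is a product with $du$ parallel. So every term of $\nabla^k(u^{2a-2j}r^{2j})$ is bounded by $|u|^{2a-2j-p}r^{2j-l}$ with $p+l=k$ and both exponents nonnegative. Since $|u|,r\le\rho<2S$, this gives $|\nabla^k\varphi_a|\le C_kS^{2a-k}$, and the lemma follows.

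\textbf{Why neither of your two resolutions supplies this.}

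In the ambient-polynomial route, you bound the second fundamental form contributions by $|\nabla^q\mathrm{II}|\le Cr^{-1-q}$. That reintroduces exactly the negative powers of $r$ you are trying to avoid. To close it you would have to show these terms cancel, which is the identity above in disguise.

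In the rescaled route, boundedness of $\tilde\nabla^l\tilde r^{2j}$ on $\{\tilde r\sim 1\}$ is not enough. The $j$-th monomial contributes $R^{2a-2}|\tilde u|^{2a-2j-p}\lesssim R^{2a-2}(S/R)^{2a-2j-p}$. The target is $S^{2a}R^{-2}(R/S)^k=R^{2a-2}(S/R)^{2a-k}$. Since $S/R\gtrsim 1$, the bookkeeping closes precisely when $l=k-p\le 2j$. A nonzero $\tilde\nabla^3\tilde r^{2}$ would break it: take $j=1$, $p=0$, $k=3$, where harmonicity forces $a_1\neq 0$. That term would exceed the target by the unbounded factor $S/R$.

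So your closing heuristic, that derivatives landing on $\tilde r$ come ``multiplied by $R^{2j}$-type factors,'' is valid only once you know that derivatives of order $>2j$ of $r^{2j}$ vanish. Your Step 2 asserts this only ``up to second fundamental form terms'' and then replaces it by the weaker homogeneity bound $|\nabla^l r^{2j}|\le C_lr^{2j-l}$ for all $l$. With the identity $\nabla^2_{g_C}(r^2)=2g_C$ the proof is a few lines; without it the argument does not close.
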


\begin{proof}
    After rescaling by $R^{-1}$,
    \[
        \tilde\nabla^k  \tilde \varphi_{a} = R^{k-2}\nabla^k \varphi_{a}.
    \]
    \textbf{Case 1:} In the subregion $\{r\geq |u|\}$, $R\sim S$. The dominant term is the last term, i.e. the term with highest degree in $r$, so
    \[
        |\tilde\nabla^k  \tilde \varphi_{a}| \leq C_k R^{k-2} R^{2a-k} \leq C_k S^{2a}R^{-2}(S^{-1}R)^k.
    \]
    \textbf{Case 2:} In the subregion $\{r\leq |u|\}$, $|u|\sim S$. The dominant term is the first term, i.e. the term with highest degree in $u$, so
    \[
        |\tilde\nabla^k  \tilde \varphi_{a}| \leq C_k R^{k-2} S^{2a-k} \leq C_k S^{2a}R^{-2}(S^{-1}R)^k.
    \]
\end{proof}

\begin{remark}\label{rmk: varphi arb small}
    In $R^{-1}\bigl(\Omega_{R,S}\cap \{r \geq |u|\}\cap \rho^{-1}(0,A^{-1})\bigr)$, $R\sim S$,
    \[
        |\tilde \nabla^k \tilde \varphi_{a}| \leq C_k S^{2a-2} \leq C_kA^{-(2a-2)}.
    \]
    Pick $\Lambda\geq \Lambda_0$ as in Definition \ref{def: AC_submfld}. In $R^{-1}\bigl(\Omega_{R,S}\cap \{\Lambda|u|^a\leq r\leq |u|\}\cap \rho^{-1}(0,A^{-1})\bigr)$, $S\sim |u| \leq \Lambda^{-1}r \sim \Lambda^{-1}R$,
    \[
        |\tilde \nabla^k \tilde \varphi_{a}| \leq C_k |u|^{2a}R^{-2} \leq C_k\Lambda^{-2}.
    \]
    By choosing $A,\Lambda$ sufficiently large, we can make the derivatives sufficiently small such that the graph of $d\varphi_a$ is well-defined in $\{r \geq \Lambda|u|^a\}\cap \rho^{-1}(0,A^{-1})$ as
    \[
        X_1 = \gr_{C\times\R}(d\varphi_{a}).
    \]
\end{remark}

\begin{corollary}\label{cor: Lag ang of X_1}
    In all $R^{-1}\bigl(\Omega_{R,S}\cap \{r\geq |u|^b\}\cap \rho^{-1}(0,A^{-1})\bigr)$,
    \[
        \|\theta_{X_1}\|_{C^1_{R^{-2}g_{X_1}}} \leq S^{2a}R^{-2}\Psi^p(A^{-1}).
    \]
\end{corollary}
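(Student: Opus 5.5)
Since $C\times\R$ is special Lagrangian, $\theta_{C\times\R}=0$, and since $\varphi_a$ is harmonic on $C\times\R$ by construction, the linear term in McLean's theorem vanishes. I would therefore apply Proposition \ref{prop:Lag ang op} with $N=C\times\R$ and $f=\varphi_a$, which leaves
\[
    \theta_{X_1}=Q\bigl(x,d\varphi_a,\nabla d\varphi_a\bigr),
\]
a purely quadratic remainder. The argument will be carried out in the rescaled pieces $R^{-1}\Omega_{R,S}$. There $\widetilde{C\times\R}=C\times\R$ has bounded geometry, and the Lagrangian neighborhood $\cL_C\times\C$ is dilation equivariant, so after rescaling the quadratic term $\tilde Q$ is a fixed smooth function with $|\tilde Q(y,z)|\le C(|y|^2+|z|^2)$, and likewise for its first derivatives, as long as $y,z$ are small. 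Because the angle is scale invariant, $\theta_{X_1}=\tilde\theta_{\tilde X_1}=\tilde Q(\tilde x,d\tilde\varphi_a,\tilde\nabla d\tilde\varphi_a)$. The $C^1$ norm of the angle on $X_1$ with respect to $R^{-2}g_{X_1}$ is comparable to its $C^1$ norm on the base, because the graph is $C^1$-close by Remark \ref{rmk: small tilde F close geom}. This gives
\[
    \|\theta_{X_1}\|_{C^1}\le C\Bigl(\sum_{k=1}^{3}|\tilde\nabla^k\tilde\varphi_a|\Bigr)^2 .
\]

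Next I would insert Lemma \ref{lem: est varphi_a}. In the region $\{r\ge|u|^b\}$ we have $R\lesssim S$, so $S^{-1}R\lesssim 1$. Each factor $|\tilde\nabla^k\tilde\varphi_a|$ with $1\le k\le3$ is then bounded by $C\,S^{2a}R^{-2}(S^{-1}R)$. Writing the square as one factor times the other, I get
\[
    \|\theta_{X_1}\|_{C^1}\le C\,S^{2a}R^{-2}\cdot S^{2a}R^{-2}(S^{-1}R)^2=C\,S^{2a}R^{-2}\cdot S^{2a-2}R^{-1}\cdot R .
\]
It therefore suffices to show that one copy of the smallness factor, $\eta:=S^{2a}R^{-2}(S^{-1}R)=S^{2a-1}R^{-1}$, is $\Psi^p(A^{-1})$ uniformly on the region. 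This same bound also guarantees that the derivatives entering $\tilde Q$ are small, so the quadratic estimate applies.

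This is the main step, and it is where $b<a$ (indeed $b<2a-1$) is used. On $\{r\ge|u|^b\}$ we have $R\gtrsim |u|^b$ and $R\gtrsim$ (when $r\ge|u|$) $S$. In the case $r\ge|u|$, $R\sim S$, so $\eta\sim S^{2a-2}\le A^{-(2a-2)}$. In the case $|u|^b\le r\le|u|$, $S\sim|u|$ and $R\ge c\,S^b$, so $\eta\lesssim S^{2a-1-b}\le A^{-(2a-1-b)}$, and $2a-1-b>0$ because $b<a$ and $a>1$. In both cases $\eta\le CA^{-\kappa}$ with $\kappa=\min(2a-2,\,2a-1-b)>0$. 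The remaining bookkeeping is to check that the rescaled derivative bounds indeed control the $C^1$ (not merely $C^0$) norm of $\tilde Q$; this follows by differentiating the smooth function $\tilde Q$ once, which costs one more derivative of $\tilde\varphi_a$ (up to order $3$) and is covered by Lemma \ref{lem: est varphi_a}. Combining the pieces gives $\|\theta_{X_1}\|_{C^1}\le S^{2a}R^{-2}\Psi^p(A^{-1})$.
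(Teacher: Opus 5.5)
Your proof is correct and is essentially the paper's argument: harmonicity of $\varphi_a$ and McLean's theorem leave only the quadratic remainder on the rescaled pieces, and Lemma \ref{lem: est varphi_a} then bounds one factor of that quadratic term by $S^{2a}R^{-2}$ and the other by $\Psi^p(A^{-1})$. The only difference is bookkeeping. You keep one factor of $S^{-1}R$, so your smallness quantity is $S^{2a-1}R^{-1}$, which needs only $b<2a-1$. The paper instead bounds all derivatives by $C_kS^{2a}R^{-2}$ and uses that $S^{2a}R^{-2}\lesssim S^{2a-2b}$ is small because $b<a$.
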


\begin{proof}
    Note that $\widetilde{C\times \R}$ is special Lagrangian and $\tilde \varphi_{a}$ is harmonic. Hence McLean's theorem (Theorem \ref{thm: McL}) implies the Lagrangian angle of $\tilde X_1 = \gr_{C\times\R}(d\tilde \varphi_a)$, and only contain the quadratic and higher order terms:
    \[
        \theta_{\tilde X_1} = \theta_{\gr_{\widetilde{C\times\R}}(d\tilde \varphi_a)} = Q(\tilde \nabla \tilde \varphi_{a}, \tilde \nabla^2 \tilde \varphi_{a}).
    \]
    By choosing $A$ sufficiently large such that $\Lambda|u|^a\leq  |u|^b$ in $\rho^{-1}(0,A^{-1})$, we can apply Lemma \ref{lem: est varphi_a} which implies all derivative of $\tilde \varphi_{a}$ are bounded by $C_kS^{2a}R^{-2}$. Thus
    \[
        \|\theta_{X_1}\|_{C^1_{R^{-2}g_{X_1}}} = \|\theta_{\tilde X_1}\|_{C^1_{g_{\tilde X_1}}} \leq C(S^{2a}R^{-2})^2 = S^{2a}R^{-2}\Psi^p(A^{-1}).
    \]
\end{proof}

\subsection{Lagrangian correction $X_2$ of $X_0$}\label{subsec: lag corr}

Recall that $L$ is an exact special Lagrangian submanifold asymptotic to cone $C$ with rate $\mu< 0$, with Liouville potential $\beta_L:L\rightarrow\R$ satisfying $d\beta_L = \lambda|_{L}$. We use $\phi: L\rightarrow \C^n$ to denote its isometric embedding into $\C^n$.

\begin{definition}\label{def: Phi_h}
    Let $U\subset\R$ be an open set and $h:U\rightarrow(0,\infty)$ be a smooth function. Define $\Phi_h:L\times U\rightarrow \C^n\times\C$ by
    % \[
    %     \Phi(x,u) = \left(|u|^a\phi(x),u - i2au^{2a-1}\beta_L(x)\right).
    % \]
    \[
        \Phi_h(x,u) = \left(h(u)\phi(x),u + iv_h(x,u)\right) \quad \text{where} \quad v_h(x,u) = -2h(u)'h(u)\beta_L(x).
    \]
    Since $\phi$ is a smooth embedding, $\Phi_h$ is also a smooth embedding.
\end{definition}

\begin{lemma}\label{lem: Phi is Lag}
    $\Phi_h$ is a smooth Lagrangian embedding in $\C^{n+1}$.
\end{lemma}

\begin{proof}
    Let $x_1,....,x_n$ be local coordinates on $L$. A basis $\{v_1,...,v_{n+1}\}$ of $T_p(\operatorname{Image}(\Phi_h))\cong \C^{n}\times\C$, $p=\Phi_h(x,u)$ is defined by
    \begin{equation*}
    \begin{split}
        v_k &= \partial_{x_k}\Phi_h = (h(u) \partial_k\phi(x), i\partial_k v(x,u)) \in \C^n\times\C, \quad\quad  k=1,...,n\\
        v_{n+1} &= \partial_u \Phi_h = (h'(u)\phi(x), 1+iv(x,u)) \in \C^n\times\C
    \end{split}
    \end{equation*}
    First, for all $k,l = 1,...,n$,
    \[
        \omega_{\C^{n+1}}(v_k,v_\ell) = \omega_{\C^n}(h(u) \partial_k\phi, h(u) \partial_\ell\phi) = h(u)^2\omega_{\C^n}(\partial_k\phi, \partial_\ell\phi) = 0
    \]
    where the third equality follows from fact that $\phi$ is a Lagrangian embedding. Next, for all $k = 1,...,n$, 
    \begin{equation*}
    \begin{split}
        \omega_{\C^{n+1}}(v_{n+1},v_k) &= \omega_{\C^n}(h' \phi, h \partial_k\phi) + \partial_kv_h = 2h'h \omega_{\C^n}\left(\frac{1}{2}\phi^{\perp}, \partial_k\phi\right) + \partial_k v_h
    \end{split}
    \end{equation*}
    where the definition of the Liouville structure implies
    \begin{equation}\label{eqn: liouv str}
        \omega_{\C^n}\left(\frac{1}{2}\phi^{\perp},\partial_k\phi\right) = \iota_{v_\lambda}\omega_{\C^n}|_L\left(\partial_k\phi\right) = \lambda|_L(\partial_{k}\phi) = d\beta_L(\partial_{k}) = \partial_k\beta_L.
    \end{equation}
    Then $v_h = -2h'h\beta_L$ implies $\omega_{\C^{n+1}}(v_{n+1},v_k) = 2h'h\partial_k\beta_L - 2h'h\partial_k\beta_L = 0$, hence $\Phi_h$ is an Lagrangian embedding.
\end{proof}

\begin{definition}[$X_2$ as an embedding]\label{def: X_2 embed}
    Consider the smooth function $h:\R\setminus\{0\}\rightarrow(0,\infty)$, $h(u) = |u|^a$. Define $\Phi = \Phi_h$ with $v=v_h=-2au^{2a-1}\beta_L$ and
    \[
        X_2 = \operatorname{Image} \Phi \cup \{(z,0)\in\C^n\times\C, z\in C\setminus\{0\}\}.
    \]
\end{definition}

\begin{remark}\label{rmk: prop of X_2}
    We make the following remarks concerning the definition of $X_2$.
    \begin{enumerate}
        \item (Symmetry) $h(u)=|u|^a$ is an even function, so $X_2$ is symmetric with respect to $\{u = 0\}$. Thus when proving properties of $X_2$, it suffices to only consider the case $u>0$ where $u=|u|$. When $u>0$, we have
        \[
            \Phi(x,u) = \left(u^a\phi(x),u - i2au^{2a-1}\beta_L(x)\right).
        \]
        \item (Regularity) The smoothness of $\phi$ and $h$ in $\{u\neq 0\}$ implies $X_2\cap \{u\neq 0\}$ is smooth. One can show $C^{2,\alpha}$-regularity of $X_2$ at $\{u = 0\}$ (using the equivalent Definition \ref{def: X_2 graph} of $X_2$). However, for our purpose of constructing the approximate solution $X$, which is defined as $X_1$ in an open neighborhood of $\{u=0,r\neq 0\}$ (see Figure \ref{X in diff regions}), the regularity of $X_2$ in $\{u\neq 0\}$ is sufficient.
        \item (Lagrangian correction) The first $n$ coordinates of $X_2$ are just scalings $u^aL$ of $L$. This coincide with $X_0$ defined in Example \ref{exm: X_0}. $X_2$ differ from $X_0$ only in the imaginary part of the last coordinate $v(x,u) = -2au^{2a-1}\beta_L(x)$, which is designed to make $X_2$ Lagrangian in $\C^{n+1}$ as above. This is why $X_2$ is called the Lagrangian correction of $X_0$. More generally, as in the proof of \ref{lem: Phi is Lag}, the Lagrangian condition enforces
        \[
            \partial_kv_h(x,u) = -2h'(u)h(u)\partial_k\beta_L(x)
        \]
        which determines $v_h$ up to a function only depending on $u$. In other words $v_h$ is uniquely determined by the Lagrangian condition up to a choice of normalization of $\beta_L$ for each $u$-cross-section:
        \begin{equation}\label{eqn: Lag corr}
             v_h(x,u) = -2h(u)'h(u)(\beta_L(x) + c(u)).
        \end{equation}
        \item (Hamiltonian isotopy) Consider $u$ as a time parameter, then $\Psi_u: L\rightarrow\C^n$ defined by $\Psi_u(x) = h(u)\phi(x)$ is a time-dependent diffeomorphism with normal velocity
        \[
            V_h(\tilde x) = h'(u)\phi(x)^\perp = h'(u)h(u)^{-1}(h(u)\phi(x))^\perp
        \]
        Since $L$ is embedded, we can define a Hamiltonian function $H:\C^n\times\R\rightarrow\R$ whose restriction at time $u$ to the scaling $h(u)L$ is $-v(x,u)$:
        \[
            H(\tilde x,u) = H_u(\tilde x) = 2h'(u)h(u)\beta_L(x) = 2h'(u)h(u)^{-1}\beta_{h(u)L}(\tilde x)
        \]
        for all $\tilde x = h(u)x\in h(u)L$. Then \eqref{eqn: liouv str} implies the normal velocity $V_H$ of the Hamiltonian isotopy coincide with $V_h$.
        \item (Distance from $\R_u$) Recall that $L$ is chosen such that $\dist(0,L)\geq 1$. Consequently, $X_2$ is disjoint from the singular axis $\R_u$ with distance
        \begin{equation}\label{r>u^a}
            r|_{X_2} \geq |u|^a
        \end{equation}
    \end{enumerate}
\end{remark}

Next, we estimate the Lagrangian angle of $X_2$ in $\{r\leq 4u^b\}$.

\begin{lemma}\label{lem: Lag ang of X_2}
    In all $R^{-1}(\Omega_{R,S}\cap\{r\leq 4u^b\})$,
    \[
        \|\theta_{X_2}\|_{C^1_{R^{-2}g_{X_2}}} \leq S^{2a}R^{-2}\Psi^p(A^{-1}).
    \]
\end{lemma}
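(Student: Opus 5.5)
The plan is to compute $\theta_{X_2}$ directly from the parametrization $\Phi(x,u)=\bigl(u^a\phi(x),\,u-2iau^{2a-1}\beta_L(x)\bigr)$. By the symmetry in Remark \ref{rmk: prop of X_2}(1), it suffices to treat $u>0$. Let $x_1,\dots,x_n$ be local coordinates on $L$ and take the tangent frame $v_k=(u^a\partial_k\phi,\,i\partial_k v)$, $v_{n+1}=(au^{a-1}\phi,\,1+i\partial_u v)$ from the proof of Lemma \ref{lem: Phi is Lag}. Then $\Omega_{n+1}(v_1,\dots,v_{n+1})$ is the determinant of a block matrix. Since $L$ is Lagrangian, the real basis $\partial_k\phi$ of $T_xL$ is also a complex basis of $\C^n$. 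Hence we may write $\phi(x)=\sum_k c_k\partial_k\phi$ with $c_k\in\C$, and a Schur complement gives
\[
\Omega_{n+1}(v_1,\dots,v_{n+1}) = u^{an}\,\Omega_n(\partial_1\phi,\dots,\partial_n\phi)\,\bigl(1+\varepsilon\bigr),\qquad \varepsilon = i\partial_u v - i a u^{-1}\sum_k c_k\,\partial_k v .
\]
Because $L$ is special Lagrangian, $\Omega_n(\partial_1\phi,\dots,\partial_n\phi)$ is real and positive in an oriented frame. Therefore $\theta_{X_2}=\arg(1+\varepsilon)$, and the pointwise bound reduces to $|\theta_{X_2}|\le C|\varepsilon|$ once $|\varepsilon|$ is small.

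Next we bound $\varepsilon$ using Corollary \ref{cor: asymp of beta_L}. We have $|\partial_u v|=2a(2a-1)u^{2a-2}|\beta_L|\le Cu^{2a-2}$. For the second term, $|c_k|\le C(1+r_L)$, where $r_L=|\phi|$, while $|\partial_k v|\le Cu^{2a-1}(1+r_L)^{\mu-1}$. So that term is at most $Cu^{2a-2}(1+r_L)^{\mu}\le Cu^{2a-2}$. In the region $\{r\le 4u^b\}$ we have $S\sim u$ and $R\lesssim u^b$, so $S^{2a}R^{-2}\gtrsim u^{2a-2b}$. Consequently
\[
|\theta_{X_2}|\le Cu^{2a-2} = C\,u^{2a-2b}\,u^{2b-2}\le S^{2a}R^{-2}\,CA^{-(2b-2)},
\]
and $b>1$ gives the $\Psi^p(A^{-1})$ factor.

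For the $C^1$ norm with respect to $R^{-2}g_{X_2}$ we must show $R|\nabla\theta_{X_2}|$ obeys the same bound. We differentiate $\varepsilon$ in the two directions separately.

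\textbf{$L$-directions.} Along $u^aL$, derivatives of $\beta_L$, $d\beta_L$ and $c_k$ gain a factor $(u^a(1+r_L))^{-1}\sim r^{-1}\sim R^{-1}$, using the decay in Corollary \ref{cor: asymp of beta_L} and $r|_{X_2}\ge u^a$ from \eqref{r>u^a}.

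\textbf{$u$-direction.} Here each derivative costs a factor $u^{-1}$, which after multiplication by $R\lesssim u^b\ll u$ is harmless.

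This requires knowing that the induced metric on $X_2$ is uniformly comparable, in this region, to $u^{2a}g_L+du^2$. The correction from $v$ enters with size $|\partial v|\lesssim u^{2a-1}$ relative to the dominant terms, so it is negligible.

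The main obstacle is this last point. One must ensure that rescaled derivatives of $\theta_{X_2}$ are measured in a metric uniformly equivalent to the rescaled product metric, so that the factors $R^{-1}$ and $u^{-1}$ above are genuinely the correct scales. This essentially amounts to establishing $r^{-1}$-bounded geometry of $X_2$ on $\{r\le 4u^b\}$. I would first verify it on each $R^{-1}\Omega_{R,S}$ by comparing the frame $\{R^{-1}v_k,\,v_{n+1}\}$ with the frame of the rescaled model $R^{-1}(u^aL)\times\R$, using the smallness of $u^{2a-2}$ and $u^{2a-1}/R\cdot R$.
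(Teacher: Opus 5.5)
Your proposal is correct and follows essentially the same route as the paper: the Schur-complement computation giving $\theta_{X_2}=\arg(1+u^{2a-2}E(x))$ (your $\varepsilon$ is exactly $u^{2a-2}E$), the bound on $E$ from Corollary \ref{cor: asymp of beta_L}, the conversion $u^{2a-2}\le S^{2a}R^{-2}CA^{-(2b-2)}$ using $S\sim u$ and $R\lesssim u^b$, and the split of the rescaled gradient into $R\,\partial_u$ and $Ru^{-a}\nabla_x$ directions. The metric comparability you flag as the main obstacle is also used implicitly by the paper when it writes $|\nabla_{R^{-2}g_{X_2}}\theta_{X_2}|\le CR|\partial_u\theta_{X_2}|+CRu^{-a}|\nabla_x\theta_{X_2}|$; it holds because, besides the $v$-correction you mention, the cross term coming from $au^{a-1}\phi$ in $v_{n+1}$ has relative size $r/u\lesssim u^{b-1}\ll 1$.
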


\begin{proof}
    We may write the basis $\{v_k\}_{k=1}^{n+1}$ as complex vectors in $\C^{n+1}$.
    \begin{equation*}
    \begin{split}
        v_k &= \left(u^a \partial_k\phi_1, ..., u^a \partial_k\phi_n, -i2au^{2a-1}\partial_k\beta_L\right), \quad\quad  k=1,...,n\\
        v_{n+1} &= \left(au^{a-1}\phi_1, ...,au^{a-1}\phi_n, 1-i2a(2a-1)u^{2a-2}\beta_L\right).
    \end{split}
    \end{equation*}
    The Lagrangian angle can be explicitly computed as the argument of the complex determinant of the matrix consisting of this basis.
    \begin{equation*}
    \begin{split}
        &\theta_{X_2} (\Phi(x,u)) = \arg \det \left( v_1^T \cdots v_n^T  v_{n+1}^T \right)\\
        &= \arg\det\begin{pmatrix}
            u^a \partial_1 \phi_1 & \cdots & u^a \partial_n \phi_1 & a u^{a-1} \phi_1 \\
            \vdots & & \vdots & \vdots \\
            u^a \partial_1 \phi_n & \cdots & u^a \partial_n \phi_n & a u^{a-1} \phi_n \\
            -i2a u^{2a-1} \partial_1{\beta_L} & \cdots & -i2a u^{2a-1} \partial_n{\beta_L} & 1-i2a(2a-1) u^{2a-2} \beta_L
            \end{pmatrix}\\
        &= \arg\det\begin{pmatrix}
            \partial_1 \phi_1 & \cdots & \partial_n \phi_1 & a u^{-1} \phi_1 \\
            \vdots & & \vdots & \vdots \\
            \partial_1 \phi_n & \cdots & \partial_n \phi_n & a u^{-1} \phi_n \\
            -i2a u^{2a-1} \partial_1{\beta_L} & \cdots & -i2a u^{2a-1} \partial_n{\beta_L} & 1 - i2a(2a-1) u^{2a-2} \beta_L
            \end{pmatrix}\\
        &= \arg\left( \det(\nabla\phi^T) + u^{2a-2} \det\left(\begin{array}{c|c}
            \nabla \phi^T & a\phi^T \\ \hline
            -i2a\nabla\beta_L & -i2a(2a-1)\beta_L
            \end{array}\right) \right)\\
        &= \arg \left( \det(\nabla\phi^T) + u^{2a-2}\det(\nabla\phi^T)\left( - i2a(2a-1)\beta_L + i2a^2\nabla\beta_L \cdot (\nabla\phi^T)^{-1} \cdot \phi^T \right)\right)\\
        &= \arg (\det(\nabla\phi^T)) + \arg \left( 1 + u^{2a-2}\left( - i2a(2a-1)\beta_L + i2a^2\nabla\beta_L \cdot (\nabla\phi^T)^{-1} \cdot \phi^T \right)\right)\\
        &= \arg\left( 1 + u^{2a-2} E(x) \right) = \arctan\left(u^{2a-2}E(x)\right)
    \end{split}
    \end{equation*}
    The third to last equality uses the determinant formula for bock matrices. The last equality uses the assumption that $\phi$ is special Lagrangian, i.e. $\arg (\det(\nabla\phi^T)) = 0$.
    
    Next, we analyze
    \[
        E(x) = - i2a(2a-1)\beta_L + i2a^2\nabla\beta_L \cdot (\nabla\phi^T)^{-1} \cdot \phi^T.
    \]
    By the asymptotic condition of $\beta_L$ (Corollary \ref{cor: asymp of beta_L}) and the fact that $\phi$ is isometric,
    \[
        |\beta_L| \leq C,\quad  |\nabla\beta_L| \leq C(1+r)^{\mu-1},\quad |\nabla^2\beta_L|\leq C(1+r)^{\mu-2}
    \]
    \[
        |\phi(x)| = r, \quad |\nabla \phi| = 1, \quad |\nabla^2 \phi| = r^{-1}
    \]
    where $\mu< 0$. Then we can bound $|E(x)|$ and $|\nabla E(x)|$,
    \begin{equation*}
    \begin{split}
        |E(x)| &\lesssim |\beta_L| + |\nabla\beta_L| |\nabla \phi|^{-1}|\phi| \leq C + C(1+r)^{\mu-1} r \leq C\\
        |\nabla E(x)| &\lesssim |\nabla \beta_L| + |\nabla^2\beta_L| |\nabla \phi|^{-1} |\phi| + |\nabla \beta_L| |\nabla^2 \phi||\nabla \phi|^{-2} |\phi| + |\nabla \beta_L| |\nabla \phi|^{-1} |\nabla\phi|\\
        &\leq C(1+r)^{\mu-1} + C(1+r)^{\mu-2}\cdot r + C(1+r)^{\mu-1}\cdot r^{-1}\cdot r + C(1+r)^{\mu-1}\\
        &\leq C(1+r)^{\mu-1}.
    \end{split}
    \end{equation*}
    Consequently, in $\Omega_{R,S}\cap\{r\leq 4u^b\}$, where $u\sim S$, $R\sim r\lesssim u^b\ll u$,
    \begin{equation*}
        |\theta_{X_2}(\Phi(x,u))| = |\arctan (u^{2a-2}E(x))| \lesssim u^{2a-2}|E(x)| = S^{2a}R^{-2}\Psi^p(A^{-1})
    \end{equation*}
    \begin{equation*}
    \begin{split}
        |\nabla_{R^{-2}g_{X_2}}\theta_{X_2}(\Phi(x,u))| &\leq CR|\partial_u\theta_{X_2}(\Phi(x,u))| + CRu^{-a}|\nabla_x\theta_{X_2}(\Phi(x,u))|\\
        &\leq CRu^{2a-3}|E(x)| + CRu^{-a}u^{2a-2}|\nabla E(x)|\\
        &\leq Cu^{2a}R^{-2}(u^{-1}R)^3 + Cu^{2a}R^{-2} u^{-2-a}R^3 (1+(u^{-a}R))^{\mu-1}\\
        &\leq Cu^{2a}R^{-2}(u^{-1}R)^3 + Cu^{2a}R^{-2} u^{-2}R^2\\
        &= S^{2a}R^{-2}\Psi^p(A^{-1}).
    \end{split}
    \end{equation*}
    Hence, we have the desired estimate.
\end{proof}

Next, using Definition \ref{def: AC_submfld} , we introduce the following equivalent definition of $X_2$ as a small graph over the model space $C\times\R$. This definition more straightforwardly exhibit the asymptotic geometry of $X_2$.

\begin{definition}[$X_2$ as graphs over model space]\label{def: X_2 graph}
    For $A,\Lambda > 0$ sufficiently large, in $\rho^{-1}(0,A^{-1})\cap \{r > \Lambda |u|^a\}$, $X_2$ is a graph over $C\times\R$, i.e.
    \[
        X_2 = \gr_{C\times\R}(dG)
    \]
    where $G: C\times\R\rightarrow \R$ is defined by
    \[
        G(x,u) = \begin{cases}
            -u^{2a}c_{\infty,m} + u^{2a}g(|u|^{-a}x)& u\neq 0\\
            0& u = 0
        \end{cases}
    \]
\end{definition}

Here, $g$ is the potential function of $L$ from Definition \ref{def: AC_submfld} such that $L\cap \{r\geq \Lambda\} = \gr_{C\cap \{r\geq \Lambda\}}(dg)$. A priori, the graph is only well-defined when $dG$ is sufficiently small, which we will show below. Note that while $g$ decay at rate $\mu<0$ when $|x|=r\rightarrow\infty$, $G$ only converges to constants when $r\rightarrow\infty$.

\begin{lemma}\label{lem: est G}
    In all $R^{-1}(\Omega_{R,S}\cap \rho^{-1}(0,A^{-1})\cap \{r>\Lambda |u|^a\}$,
    \[
        |\tilde \nabla^k \tilde G| \leq C_k u^{2a}R^{-2}(u^{-1}R)^k + C_ku^{2a}R^{-2}(u^{-a}R)^\mu.
    \]
\end{lemma}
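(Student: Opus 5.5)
The plan is to split $G$ into its two pieces and bound each one separately after the rescaling \eqref{eqn: rescaled derivative}, which gives $\tilde\nabla^k\tilde G = R^{k-2}\nabla^k G$. By the symmetry in Remark \ref{rmk: prop of X_2}(1) we may take $u>0$. Write $G = G_0 + G_1$, where $G_0 = -u^{2a}c_{\infty,m}$ and $G_1(x,u) = u^{2a}g(u^{-a}x)$. The derivatives $\nabla^k$ are taken on $C\times\R$ with the product metric, so each one is either a $u$-derivative or a derivative along the cone $C$ in the $x$-variables. In the region we care about, $r>\Lambda u^a$, so $u^{-a}x$ lies in $C\cap\{r\geq\Lambda\}$. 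There $g$ is defined and satisfies \eqref{eqn: AC of g}.

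\textbf{The term $G_0$.} It depends only on $u$, so only pure $u$-derivatives survive, and $|\nabla^k G_0|\le C_k u^{2a-k}$. Hence $|\tilde\nabla^k\tilde G_0|\le C_k R^{k-2}u^{2a-k} = C_k u^{2a}R^{-2}(u^{-1}R)^k$, which is the first term of the claimed bound.

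\textbf{The term $G_1$.} Set $y=u^{-a}x$, so $|y| = u^{-a}r$. Homogeneity of the conical metric gives $|\nabla_x^j[g(u^{-a}x)]| = u^{-aj}|(\nabla^j g)(y)|\le C u^{-aj}(u^{-a}r)^{\mu-j} = C(u^{-a}r)^\mu r^{-j}$. Each $\partial_u$ falls either on $u^{2a}$, producing a factor $u^{-1}$, or on the argument. In the second case it acts as $-a u^{-1}(y\cdot\nabla g)(y)$, and since $|y|\,|\nabla g(y)|\lesssim |y|^\mu$ this again produces a factor $u^{-1}$ with the same decay $|y|^\mu$. One can see this by noting that $r\partial_r$ preserves the class of functions with $|\nabla^j f|\lesssim r^{\mu-j}$. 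By induction, a mixed derivative with $j$ derivatives in $x$ and $l$ in $u$ obeys
\[
|\nabla_x^j\partial_u^l G_1|\le C\,u^{2a}(u^{-a}r)^\mu r^{-j}u^{-l}.
\]
Multiplying by $R^{k-2}$ with $j+l=k$ and $r\sim R$ gives $C u^{2a}R^{-2}(u^{-a}R)^\mu (u^{-1}R)^l$.

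\textbf{The main obstacle.} The mixed terms with $l\ge1$ carry the extra factor $(u^{-1}R)^l$, and this must be absorbed. Where $R\lesssim u$, which covers the region $\{r\le 4u^b\}$ that is actually used, we have $(u^{-1}R)^l\lesssim1$ and the mixed terms fall under the second term of the bound. Where $R\gtrsim u$, we have $u^{-a}R\gtrsim u^{1-a}\ge1$, so $(u^{-a}R)^\mu\le1$ and $(u^{-1}R)^l\le (u^{-1}R)^k$. The mixed term is then dominated by the first term $u^{2a}R^{-2}(u^{-1}R)^k$. Combining the two cases gives the stated estimate.

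I expect the real care to be needed in making the chain-rule induction precise. Each $u$-derivative falling on the argument $u^{-a}x$ has to produce exactly the factor $u^{-1}$ without any loss in the $|y|^\mu$ decay. This uses the scale-invariant form of \eqref{eqn: AC of g}, together with the dilation-equivariance of the Lagrangian neighborhood $\cL_{C\times\R}=\cL_C\times\C$, which is needed so that the graph of $dG$ really does agree with $X_2$ as claimed in Definition \ref{def: X_2 graph}.
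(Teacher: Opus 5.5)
Your proposal is correct and follows essentially the same route as the paper: split off $-u^{2a}c_{\infty,m}$, bound the derivatives of $g(|u|^{-a}x)$ via the chain rule and \eqref{eqn: AC of g} (each $u$-derivative costing a factor $u^{-1}$), and combine the pieces with the Leibniz rule. Your case split makes explicit the step the paper leaves implicit, namely absorbing $(u^{-a}R)^\mu(u^{-1}R)^k$ into the first term; this can also be done in one line, since $(u^{-a}R)^\mu\lesssim\Lambda^{\mu}$ on all of $\{r>\Lambda|u|^a\}$.
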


\begin{proof}
    After rescaling by $R^{-1}$,
    \[
        \tilde G = R^{-2}G = -R^{-2}u^{2a}c_{\infty,m} + R^{-2}u^{2a}g(|u|^{-a}x)
    \]
    First, the derivative of the first term bounded as in the proof of Lemma \ref{lem: est varphi_a},
    \begin{equation}\label{eqn: est u^2a der}
        |\tilde\nabla^k  (R^{-2}u^{2a})| \leq C_k u^{2a}R^{-2}(u^{-1}R)^k,
    \end{equation}
    so it suffices to only consider the second term in $\tilde G$. By Definition \ref{def: AC_submfld},
    \begin{equation}\label{eqn: est g der}
    \begin{split}
        |\tilde \nabla^k g(|u|^{-a}x))| &\leq C_kR^k (u^{-a}R)^{\mu-k}(\nabla_{\tilde x}(|u|^{-a}r)^k + \partial_{\tilde u}(|u|^{-a}r)^k)\\
        &\leq C_k u^{ak} (u^{-a}R)^{\mu}(u^{-ak}+R^ju^{-ak-k})\\
        &\leq C_k(u^{-a}R)^{\mu}(1+ (u^{-1}R)^k).
    \end{split}
    \end{equation}
    The derivative of the second term of $\tilde G$ can be written as
    \[
        \tilde\nabla^k  (R^{-2}u^{2a}g(|u|^{-a}x)) = \sum_{i+j=k}\tilde\nabla^i (R^{-2}u^{2a}) \tilde\nabla^j  g(|u|^{-a}x)).
    \]
    Then \eqref{eqn: est u^2a der} and \eqref{eqn: est g der} together implies the desired estimate.
\end{proof}

\begin{remark}\label{rmk: dG arb small}
    Plugging in $k=1$ and using the fact that $R > C\Lambda u^a$, we see that
    \[
        |\tilde \nabla \tilde G| \leq C_k \Lambda^{-2} + C_k u^{2a-1}R^{-1},
    \]
    which can be made arbitrarily small by taking $A$ and $\Lambda$ large enough, so the graph of $dG$ is well-defined. In particular, in $\{\Lambda |u|^a\leq r\leq 2|u|^b\}$, $u^{-1}R \ll 1$, so the same estimate holds for all $k\geq 1$,
    \begin{equation}\label{eqn: der est of tilde G}
        |\tilde \nabla^k \tilde G| \leq C_k \Lambda^{-2}.
    \end{equation}   
\end{remark}

\begin{lemma}
    Definition \ref{def: X_2 embed} and Definition \ref{def: X_2 graph} are equivalent in the corresponding subregion.
\end{lemma}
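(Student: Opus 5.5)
The plan is to compare the two descriptions slice by slice in $u$. The key structural fact is that the Lagrangian neighborhood $\cL_{C\times\R}=\cL_C\times\C$ is a product. So the fiber-preserving symplectomorphism $T^*(C\times\R)\cong T^*C\times T^*\R\to\cL_C\times\C$ should be (after checking against Theorem \ref{thm: Lag nbhd}) the product of the dilation-equivariant map $\Phi_C$ with the map $(u,\xi)\mapsto u+i\xi$ on $T^*\R$. Granting this, for $G:C\times\R\to\R$ the graph $\gr_{C\times\R}(dG)$ consists of the points
\[
\bigl(\Phi_C(d_xG(\cdot,u))[x],\; u+i\,\partial_uG(x,u)\bigr),\qquad (x,u)\in C\times\R .
\]
To establish equivalence with Definition \ref{def: X_2 embed} I will therefore match two things: the $\C^n$-component at fixed $u$, and the imaginary part of the last coordinate. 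By the symmetry of Remark \ref{rmk: prop of X_2}(1) it suffices to take $u>0$; at $u=0$ both definitions give $C$.

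For the $\C^n$-component I use dilation equivariance of $\cL_C$. If $\gr_C(d\psi)=N$, then $\lambda N=\gr_C(d\psi_\lambda)$ with $\psi_\lambda(x)=\lambda^2\psi(\lambda^{-1}x)$; the exponent is $2$ because $1$-forms scale by $\lambda$ and potentials by $\lambda^2$, as in \eqref{eqn: def rescaled pot}. Applying this with $\lambda=u^a$ and $\psi=g$ from Definition \ref{def: AC_submfld} shows $u^aL\cap\{r>\Lambda u^a\}=\gr_C\bigl(d_x[u^{2a}g(u^{-a}x)]\bigr)$. The extra term $-u^{2a}c_{\infty,m}$ in $G$ is locally constant in $x$ on each $\cone(\Sigma_m)$, so it does not change $d_xG$. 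Hence the $u$-slices of $\operatorname{Image}\Phi$ and of $\gr_{C\times\R}(dG)$ have the same $\C^n$-part. The region condition $r>\Lambda|u|^a$ translates exactly into $u^{-a}r>\Lambda\ge\Lambda_0$, where $L$ is graphical.

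For the last coordinate I compute $\partial_uG$ at the base point $x\in C$. Write $y=u^{-a}x$, so that $\Phi_C(dg)[y]\in L$ is the corresponding point of $L$. Then
\[
\partial_u\bigl(u^{2a}g(u^{-a}x)\bigr)=2au^{2a-1}g(y)-au^{2a-1}\,r_y\partial_rg(y).
\]
Lemma \ref{lem: beta_L near infty} gives $\beta_L=c_{\infty,m}+g-\tfrac12 r\partial_rg$ on the end asymptotic to $\cone(\Sigma_m)$. Hence
\[
v=-2au^{2a-1}\beta_L=-2au^{2a-1}\Bigl(c_{\infty,m}+g(y)-\tfrac12r_y\partial_rg(y)\Bigr),
\]
to be compared with $\partial_uG=-2au^{2a-1}c_{\infty,m}+2au^{2a-1}\bigl(g-\tfrac12 r\partial_r g\bigr)$. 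Remark \ref{rmk: prop of X_2}(3) says the Lagrangian condition already forces the two last coordinates to differ by a function $c(u)$ alone, since both submanifolds are Lagrangian with the same slices. Letting $r\to\infty$ along each end, where $g$ and $r\partial_rg$ tend to $0$ by \eqref{eqn: AC of g}, both sides tend to $-2au^{2a-1}c_{\infty,m}$, which forces $c(u)=0$. This limit is exactly why $-u^{2a}c_{\infty,m}$ is included in $G$, and it uses that $c_{\infty,m}$ is the correct limit on each component.

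The main obstacle is bookkeeping of sign conventions, and I expect it to be delicate. The two displays above visibly differ in the sign of the $g$-terms. So I will need to fix carefully the orientation of the product map on the $T^*\R$ factor ($\xi\mapsto\pm v$), the sign in $\omega(\tfrac12\phi^\perp,\cdot)=\lambda$, and the sign relating $\beta_L$ to $g$ in Lemma \ref{lem: beta_L near infty}. The $g$-terms must then cancel identically rather than merely asymptotically. Once these conventions are pinned down consistently, which I would do by testing the formula on an explicit scaling of a cone piece, the remaining argument is the constant-matching step above, and the equivalence follows.
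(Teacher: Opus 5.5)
Your argument is the paper's own: you identify the $\C^n$-slices and the real part through the product neighborhood $\cL_C\times\C$ and dilation equivariance, use the Lagrangian condition of Remark \ref{rmk: prop of X_2}(3) to reduce the difference of imaginary parts to a function of $u$ on each end, and kill that function with the common limit $-2au^{2a-1}c_{\infty,m}$; this uses only $\beta_L\to c_{\infty,m}$, not the sign of the $g$-terms, so it is already complete. The mismatch you noticed comes from a sign slip in the paper's Lemma \ref{lem: beta_L near infty}, not from your setup: with the convention $\Phi(\alpha)[x]=x+J\alpha^\sharp+\cdots$ of Theorem \ref{thm: Lag nbhd} (hence $u+i\xi$ on the $\R$ factor), a flat graph $\{x+i\nabla g(x)\}$ gives $\lambda|_{L}=d\bigl(\tfrac12 r\partial_r g-g\bigr)$, so $\beta_L=c_{\infty,m}+\tfrac12 r^3\partial_r(g/r^2)$, and with that sign $v=\partial_u G$ holds identically, leaving Definition \ref{def: X_2 graph} unchanged.
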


\begin{proof}
    First, observe that by definition of $g$, the first $n$-coordinates of $X_2 = \gr_{C\times\R}(dG)$ are also scalings $u^aL$. So the embedding definition \ref{def: X_2 embed} and the graphical definition \ref{def: X_2 graph} coincide in the first $n$-coordinates and the real part of the ($n+1$)-th coordinate.
    
    Second, observe that both definitions are Lagrangian in $\C^{n+1}$ by construction, so as noted in Remark \ref{rmk: prop of X_2}(3), the imaginary part of the ($n+1$)-th coordinate is uniquely defined up to a function only depending on $u$.

    Lastly, observe that when $x\rightarrow\infty_m$, the asymptotics of $g$ and $\beta_L$ implies
    \[
        \partial_u G(x,u) = -2au^{2a-1}c_{\infty,m} + \partial_u(u^{2a}g(x)) \rightarrow -2au^{2a-1}c_{\infty,m},
    \]
    \[
        v(x,u) = -2au^{2a-1}\beta_L(x) \rightarrow -2au^{2a-1}c_{\infty,m},
    \]
    so for both definitions, given any $u$, the imaginary part of the ($n+1$)-th coordinate converge to $-2au^{2a-1}c_{\infty,m}$ when $x\rightarrow\infty_m$. Combined with the second observation, the two definitions are equivalent.
\end{proof}

\subsection{Interpolation between $X_1$ and $X_2$}

From the previous two subsections:
\begin{enumerate}
    \item $X_1$ is the graph of $d\varphi_a$ over $C\times\R$ in $\{r\geq \Lambda|u|^a\}$,
    \item $X_2$ is the graph of $dG$ over $C\times\R$ in $\{r\geq \Lambda|u|^a\}$.
\end{enumerate}
Since $b < a$, we can choose $A$ large enough such that $\Lambda |u|^a < |u|^b$ in $\rho^{-1}(0,A^{-1})$. Then both $X_{1}$ and $X_2$ are graphs over $C\times\R$ in $\{|u|^b\leq r\leq 2|u|^b\}$, so we can interpolate between them here as graphs over $C\times\R$:
\begin{equation}\label{eqn: def of X}
    X = 
    \begin{cases}
        X_1 & r \geq 2|u|^b,\\
        \gr_{C\times\R}(I) & |u|^b\leq r \leq 2|u|^b,\\
        X_2 & r \leq |u|^b,
    \end{cases}
\end{equation}
where
\[
    I (z,u) = \chi_2\left(\frac{|z|}{|u|^b}\right) G(z,u) + \left(1- \chi_2\left(\frac{|z|}{|u|^b}\right)\right)\varphi_a(z,u)
\]
and $\chi_2:\R \rightarrow \R$ is a smooth cut-off function satisfying
\[
    \chi_2(t) =
    \begin{cases}
        0\quad t\geq 2,\\
        1\quad t\leq 1.
    \end{cases}
\]

We now estimate $I$ and its derivatives, from which we can show the graph is well-defined and obtain Lagrangian angle estimates. The crucial fact is that the dominant terms of $\varphi_a$ and $G$ are both $-u^{2a}c_{\infty,m}$.

\begin{lemma}\label{lem: est G-varphi}
    In all $R^{-1}(\Omega_{R,S}\cap \{|u|^b\leq r\leq 2|u|^b\}\cap \rho^{-1}(0,A^{-1}))$,
    \[
        \left|\tilde \nabla^k (\tilde G - \tilde \varphi_a)\right| \leq S^{2a}R^{-2}\Psi^p(A^{-1})
    \]
\end{lemma}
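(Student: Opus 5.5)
Write the difference explicitly. On the component $\cone(\Sigma_m)$, the terms $-u^{2a}c_{\infty,m}$ in $G$ and $\varphi_a$ cancel, so
\[
    G-\varphi_a = u^{2a}g(|u|^{-a}x) + c_{\infty,m}\sum_{j=1}^{a} a_j u^{2a-2j}r^{2j}.
\]
After rescaling by $R^{-1}$, I will estimate each piece separately. I will work in the region $\{|u|^b\le r\le 2|u|^b\}\cap\rho^{-1}(0,A^{-1})$. There $R\sim r\sim |u|^b$ and $S\sim |u|$, since $b>1$ and hence $r\ll |u|$. The basic small quantity is $u^{-1}R\sim |u|^{b-1}\lesssim A^{-(b-1)}$. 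It is $\Psi^p(A^{-1})$ because $b>1$.

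\textbf{Polynomial terms.} For each $j\ge1$, the function $R^{-2}u^{2a-2j}r^{2j}$ is a monomial in $u$ and $r$. Its $\tilde\nabla^k$-derivatives are bounded by the scaling count in the proof of Lemma \ref{lem: est varphi_a}. Derivatives in the $r$ or link directions cost a factor $\tilde r^{-1}\sim 1$, and $u$-derivatives cost a factor $R/u\ll1$. This gives
\[
    C_k R^{-2}u^{2a-2j}R^{2j} = C_k u^{2a}R^{-2}(u^{-1}R)^{2j}.
\]
Since $j\ge1$, this is at most $S^{2a}R^{-2}\Psi^p(A^{-1})$.

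\textbf{The term $u^{2a}g(|u|^{-a}x)$.} Here I reuse \eqref{eqn: est u^2a der} and \eqref{eqn: est g der} from the proof of Lemma \ref{lem: est G}. The Leibniz rule bounds the $k$-th derivative by
\[
    C_k u^{2a}R^{-2}(u^{-a}R)^{\mu}\bigl(1+(u^{-1}R)^k\bigr).
\]
In our region $u^{-a}R\sim |u|^{b-a}\gtrsim A^{a-b}$, because $b<a$. Since $\mu<0$, the factor $(u^{-a}R)^{\mu}\lesssim A^{-(a-b)|\mu|}$ is again $\Psi^p(A^{-1})$. This region lies inside $\{r>\Lambda|u|^a\}$ once $A$ is large, so Lemma \ref{lem: est G} applies. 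Adding the two estimates and using $S\sim|u|$ gives the claim.

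\textbf{Main obstacle.} The one subtle point is the decay of the $g$-term. Lemma \ref{lem: est G} on its own only gives the size $\Lambda^{-2}$, which is not small in $A$. The needed smallness comes from $(u^{-a}R)^\mu$, which requires evaluating at $r\sim|u|^b$ with $b<a$ rather than at $r\sim\Lambda|u|^a$. So I will be careful that $\mu<0$ strictly and that the constants do not depend on $R,S$. I will also check that the $u$-derivatives of the composite $g(|u|^{-a}x)$ are handled correctly. These produce the factors $(u^{-1}R)^k$, which are harmless here.
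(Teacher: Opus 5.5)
Your proposal is correct and follows the paper's proof essentially step for step: you cancel the common leading term $-u^{2a}c_{\infty,m}$, bound the polynomial remainder by its $j=1$ term to get the factor $(u^{-1}R)^2\sim |u|^{2(b-1)}$, and bound the $g$-term by $(u^{-a}R)^{\mu}\sim |u|^{(b-a)\mu}$, and both are polynomially small in $A^{-1}$ because $1<b<a$ and $\mu<0$. Applying the Leibniz rule to the estimates from the proof of Lemma~\ref{lem: est G} is slightly more careful than the paper's one-line bound, which tracks only the $x$-derivatives of $g(|u|^{-a}x)$ and omits the $u$-derivatives.
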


\begin{proof}
    Taking the difference of $G$ and $\varphi_a$, the leading term $-u^{2a}c_{\infty,m}$ cancels,
    \[
        G - \varphi_a = u^{2a}g(u^{-a}x) + u^{2a}\sum_{j=1}^{a} a_ju^{-2j}r^{2j}c_{\infty,m}.
    \]
    In $\Omega_{R,S}\cap \{|u|^b\leq r\leq 2|u|^b\}$, we have $R\sim r\sim |u|^b\ll |u|\sim S$, so $\sum_{j=1}^{a} a_ju^{-2j}r^{2j}$ and its derivatives are bounded by the term with highest degree in $u$. Hence, after rescaling by $R^{-1}$,
    \begin{equation*}
    \begin{split}
        |\tilde\nabla^k  (\tilde G - \tilde \varphi_{a})| &= R^{k-2}|\nabla^k (G - \varphi_{a})|\\
        &\leq R^{k-2}u^{2a}u^{-ak}|\nabla^k g(u^{-a}x)| + C_k R^{k-2}u^{2a}u^{-2}r^{2-k}\\
        &\leq C_ku^{2a}R^{-2}(u^{-a}R)^k(1+u^{-a}R)^{\mu-k} +  C_k u^{2a}R^{-2}(u^{-1}R)^2\\
        &\leq C_ku^{2a}R^{-2}(u^{(b-a)\mu} + u^{2(b-1)})\\
        &= S^{2a}R^{-2}\Psi^p(A^{-1})
    \end{split}
    \end{equation*}
\end{proof}

\begin{corollary}\label{cor: est I}
    In all $R^{-1}(\Omega_{R,S}\cap \{|u|^b\leq r\leq 2|u|^b\}\cap \rho^{-1}(0,A^{-1}))$, for $k\geq 1$,
    \[
        \left|\tilde \nabla^k \tilde I\right| \leq S^{2a}R^{-2}\Psi^p(A^{-1})
    \]
\end{corollary}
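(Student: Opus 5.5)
The plan is to split the interpolated potential as
\[
    I = \varphi_a + \chi_2\left(\frac{|z|}{|u|^b}\right)\bigl(G-\varphi_a\bigr),
\]
so that after rescaling by $R^{-1}$ we have $\tilde I = \tilde\varphi_a + \chi\,(\tilde G - \tilde\varphi_a)$, where $\chi = \chi_2(|z|/|u|^b)$. The cutoff is dimensionless, so it is not rescaled. We then bound the two pieces separately, for each $k\geq 1$.

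\textbf{Harmonic piece.} For $\tilde\varphi_a$ we invoke Lemma \ref{lem: est varphi_a}, which gives $|\tilde\nabla^k\tilde\varphi_a|\leq C_kS^{2a}R^{-2}(S^{-1}R)^k$. In the region $\{|u|^b\leq r\leq 2|u|^b\}\cap\rho^{-1}(0,A^{-1})$ we have $R\sim |u|^b$ and $S\sim|u|\lesssim A^{-1}$. Hence $S^{-1}R\lesssim |u|^{b-1}\leq CA^{-(b-1)}$, and since $b>1$ and $k\geq 1$,
\[
    (S^{-1}R)^k\leq \Psi^p(A^{-1}).
\]
This is precisely where $k\geq 1$ is needed. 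For $k=0$ the bound would fail, since $\tilde\varphi_a$ itself is of size $S^{2a}R^{-2}$ with no small factor.

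\textbf{Cutoff piece.} For $\chi(\tilde G-\tilde\varphi_a)$ we apply the Leibniz rule:
\[
    \tilde\nabla^k\bigl(\chi(\tilde G-\tilde\varphi_a)\bigr)=\sum_{i+j=k}\tilde\nabla^i\chi\ast\tilde\nabla^j(\tilde G-\tilde\varphi_a).
\]
Lemma \ref{lem: est G-varphi} already bounds every $\tilde\nabla^j(\tilde G-\tilde\varphi_a)$, including $j=0$, by $S^{2a}R^{-2}\Psi^p(A^{-1})$. It therefore suffices to show $|\tilde\nabla^i\chi|\leq C_i$ uniformly in $R,S$.

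\textbf{Derivatives of the cutoff.} Write $\chi=\chi_2(\tilde r\,R/|u|^b)$. Differentiating in the $\tilde z$ (cone) directions produces factors $R/|u|^b\sim 1$ together with derivatives of $\tilde r$. These are bounded, because the rescaled cone $R^{-1}(C\times\R)$ has bounded geometry on $\{\tilde r\sim 1\}$. Differentiating in $\tilde u$ produces factors $\partial_{\tilde u}=R\,\partial_u$ acting on $|u|^{-b}$, each contributing a factor of order $R/|u|\ll 1$. Mixed and higher derivatives combine these two types, so $|\tilde\nabla^i\chi|\leq C_i$.

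Summing the two pieces gives the claimed bound $|\tilde\nabla^k\tilde I|\leq S^{2a}R^{-2}\Psi^p(A^{-1})$.

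The main point of care is the cutoff estimate. Derivatives are taken with respect to the metric of the rescaled cone $\widetilde{C\times\R}$, and we need the radial variable $\tilde r$ to have bounded derivatives there. This holds because the construction is only carried out in the region where $\tilde r\sim 1$. Everything else reduces directly to Lemmas \ref{lem: est varphi_a} and \ref{lem: est G-varphi}.
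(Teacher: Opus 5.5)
Your proposal is correct and follows the paper's proof essentially verbatim. It uses the same splitting $\tilde I=\tilde\varphi_a+\chi(\tilde G-\tilde\varphi_a)$ and uniform bounds on the cutoff derivatives, then Lemma~\ref{lem: est G-varphi} for the cutoff piece and Lemma~\ref{lem: est varphi_a} with $S^{-1}R\lesssim S^{b-1}$ for the harmonic piece; your plus sign in the splitting is the correct one, whereas the paper writes a minus, which does not affect the estimate.
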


\begin{proof}
    After rescaling by $R^{-1}$,
    \[
        \tilde I = \tilde \varphi_a - \chi_2\left(R^{1-b}\frac{|\tilde z|}{|\tilde u|^b}\right) \left(\tilde G-\tilde \varphi_a\right).
    \]
    We estimate the derivatives of the two terms separately. First, by $r\sim R\sim |u|^b$,
    \[
        \left|\tilde\nabla^{k}\left(R^{1-b}\frac{|\tilde z|}{|\tilde u|^b}\right) \right | \leq C_k R^{1-b}|\tilde u|^{-b} \leq C_kRS^{-b} \leq C_k.
    \]
    for all $k\geq 0$. Since $\chi_2$ is fixed, this shows all derivatives of the cutoff factors are uniformly bounded. Then by Lemma \ref{lem: est G-varphi},
    \[
        \left| \tilde\nabla^k\left(\chi_2\left(R^{1-b}\frac{|\tilde z|}{|\tilde u|^b}\right) \left(\tilde G-\tilde \varphi_a\right)\right)\right| \leq S^{2a}R^{-2}\Psi^p(A^{-1})
    \]
    for all $k\geq 0$. Next, by Lemma \ref{lem: est varphi_a} and $R\sim S^b$,
    \[
         |\tilde \nabla^k \tilde \varphi_{a}| \leq C_k S^{2a}R^{-2}(S^{-1}R)^k \leq S^{2a}R^{-2}\Psi^p(A^{-1})
    \]
     for all $k\geq 1$. Combining the two estimates above completes the proof.
\end{proof}

\begin{remark}\label{rmk: I arb small}
    Immediately from Corollary \ref{cor: est I} and $S^a \ll u^b\sim R$, for all $k\geq 0$, $\left|\tilde \nabla^k \tilde I\right|$ can be made arbitrarily small by taking $A$ sufficiently large. Consequently, the graph of $dI$ is well-defined.
\end{remark}

\begin{corollary}\label{cor: Lag ang of interpolation}
    The Lagrangian angle of $X$ satisfies
    \[
        \|\theta_{X}\|_{C^1_{R^{-2}g_{X}}} \leq S^{2a}R^{-2}\Psi^p(A^{-1}).
    \]
\end{corollary}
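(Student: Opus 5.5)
The estimate is proved region by region, following the three-piece definition \eqref{eqn: def of X}. Each $\Omega_{R,S}$ is split according to whether it meets $\{r\geq 2|u|^b\}$, $\{r\leq |u|^b\}$, or the interpolation band $\{|u|^b\leq r\leq 2|u|^b\}$. On the first region $X=X_1$, and the bound is exactly Corollary \ref{cor: Lag ang of X_1}, which already holds on the larger set $\{r\geq |u|^b\}$. On the second region $X=X_2$, and since $\{r\leq |u|^b\}\subset\{r\leq 4|u|^b\}$, the bound is Lemma \ref{lem: Lag ang of X_2}. Because both estimates hold on slightly enlarged regions, pieces $\Omega_{R,S}$ straddling the cutoff boundaries cause no trouble: the $C^1$ norm is local, so we may cover them by these overlapping regions. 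It remains only to treat the band.

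In the band, $X=\gr_{C\times\R}(dI)$. Since $I$ is a genuine function, $dI$ is exact and $X$ is Lagrangian there. By Remark \ref{rmk: I arb small} the graph is well defined, and after rescaling by $R^{-1}$ all derivatives of $\tilde I$ are small. I will apply McLean's theorem in the form of Proposition \ref{prop:Lag ang op} on the rescaled cone $\widetilde{C\times\R}$, which is special Lagrangian with bounded geometry after rescaling:
\[
\theta_{\tilde X}=\Delta_{\widetilde{C\times\R}}\tilde I+Q(\tilde\nabla\tilde I,\tilde\nabla^2\tilde I).
\]
The quadratic term is handled exactly as in Corollary \ref{cor: Lag ang of X_1}. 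By Corollary \ref{cor: est I}, its $C^1$ norm is at most $C(S^{2a}R^{-2}\Psi^p(A^{-1}))^2$, and this is $\leq S^{2a}R^{-2}\Psi^p(A^{-1})$ because $S^{2a}R^{-2}\sim S^{2a-2b}$ is itself small in the band.

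For the linear term, I use the harmonicity of $\varphi_a$ and write $\tilde I=\tilde\varphi_a+\tilde\chi(\tilde G-\tilde\varphi_a)$, with the sign and ordering as in the proof of Corollary \ref{cor: est I}. Then
\[
\Delta\tilde I=\Delta\bigl(\tilde\chi(\tilde G-\tilde\varphi_a)\bigr),
\]
because $\Delta_{C\times\R}\varphi_a=0$ and the Laplacian commutes with the rescaling up to the factor $R^2$. Expanding by the Leibniz rule, every term is a product of derivatives of the cutoff factor, which are uniformly bounded as shown in Corollary \ref{cor: est I}, and up to three derivatives of $\tilde G-\tilde\varphi_a$, which are bounded by $S^{2a}R^{-2}\Psi^p(A^{-1})$ by Lemma \ref{lem: est G-varphi}. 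This bounds the $C^1$ norm of the linear term.

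Finally, the $C^1$ norm is computed with respect to $g_{\tilde X}$ rather than the cone metric. By Remark \ref{rmk: small tilde F close geom} and Proposition \ref{prop: 2,2 can take graph}, the two metrics are uniformly comparable together with their first derivatives, so this changes only the constants.

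The main obstacle is the linear term. Using only Corollary \ref{cor: est I} would not suffice, because it bounds $\tilde\nabla^k\tilde I$ but not in a way that shows $\Delta\tilde I$ is small. One must exploit the cancellation of the leading terms $-u^{2a}c_{\infty,m}$ between $G$ and $\varphi_a$, through Lemma \ref{lem: est G-varphi}, together with $\Delta\varphi_a=0$. One also has to check that Lemma \ref{lem: est G-varphi} indeed covers derivatives up to order three, which its statement does, since it holds for all $k$.
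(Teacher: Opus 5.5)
Your proposal is correct and follows the paper's proof. The $X_1$ and $X_2$ regions are handled by Corollary \ref{cor: Lag ang of X_1} and Lemma \ref{lem: Lag ang of X_2}, and the interpolation band by McLean's theorem on the rescaled special Lagrangian $\widetilde{C\times\R}$.

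Your claim that Corollary \ref{cor: est I} does not suffice for the linear term is mistaken. That corollary bounds $|\tilde\nabla^k\tilde I|$ by $S^{2a}R^{-2}\Psi^p(A^{-1})$ for every $k\geq 1$, because the $\tilde\varphi_a$ contribution carries the small factor $(S^{-1}R)^k$. So $k=2,3$ already control $\Delta\tilde I$ in $C^1$, and the paper simply cites it. Your detour via $\Delta_{C\times\R}\varphi_a=0$ and Lemma \ref{lem: est G-varphi} is valid but unnecessary.
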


\begin{proof}
    In $\{r\geq 2|u|^b\}$, $X=X_1$, the estimate is proved in Corollary \ref{cor: Lag ang of X_1}. In $\{r\leq |u|^b\}$, $X=X_2$, the estimate is proved in Lemma \ref{lem: Lag ang of X_2}. We are left with the region $\{|u|^b\leq r\leq 2|u|^b\}$, where $X=\gr_{C\times\R}(dI)$. By McLean's theorem \ref{thm: McL} and the fact that $\widetilde{C\times\R}$ is special Lagrangian, the Lagrangian angle and its derivatives are controlled by $\left|\tilde \nabla^k \tilde I\right|$, which satisfies the desired estimate by Corollary \ref{cor: est I}.
\end{proof}

The following propositions translate the Lagrangian angle estimate of Corollary \ref{cor: Lag ang of interpolation} into weighted Hölder bounds and verify that $X$ satisfies the weight condition required by the perturbation scheme.

\begin{proposition}\label{prop: weight est Lag ang}
    Suppose $\delta$ is sufficiently close to $2a$ and $\tau$ is sufficiently close to $0$. Then for $A$ sufficiently large, there exist constants $C,\kappa > 0$ independent of $A$ such that in $X\cap\rho^{-1}(0, A^{-1})$,
    \begin{equation*}
        \|\theta_{X}\|_{C^{0,\beta}_{0,0}} < CA^{-\kappa}\rho^{\delta - \tau}r^{\tau - 2}.
    \end{equation*}
\end{proposition}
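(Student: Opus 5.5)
The statement is local on each annulus $\Omega_{R,S}$, so the plan is to reduce it to Corollary \ref{cor: Lag ang of interpolation}, which already gives on every $R^{-1}\Omega_{R,S}$
\[
\|\theta_X\|_{C^1_{R^{-2}g_X}(\Omega_{R,S})}\le S^{2a}R^{-2}\Psi^p(A^{-1}),
\]
and the $C^1$ norm dominates the $C^{0,\beta}$ norm in the rescaled (bounded) geometry. The left side of the claimed inequality is therefore, piece by piece, at most $C S^{2a}R^{-2}\Psi^p(A^{-1})$. Since $r\sim R$ and $\rho\sim S$ on $\Omega_{R,S}$, it suffices to show that for suitable $\delta,\tau$ there is $\kappa'>0$ with
\[
S^{2a}R^{-2}\Psi^p(A^{-1})\le CA^{-\kappa}S^{\delta-\tau}R^{\tau-2},
\]
i.e.\ $S^{2a-\delta}(S/R)^{\tau}\Psi^p(A^{-1})\le CA^{-\kappa}$ uniformly over all $\Omega_{R,S}\subset\rho^{-1}(0,A^{-1})$ meeting $X$.

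The key step is to control the ratio $S/R$ on $X$. I would use the geometry of the three regions: on $X_1$ and in the interpolation region $r\ge |u|^b$, and on $X_2$ we have $r\ge |u|^a$ by \eqref{r>u^a}. In all cases $R\gtrsim S^a$ (when $r\ge|u|$ we have $R\sim S$; otherwise $|u|\sim S$ and $r\ge |u|^a$). Hence $1\le S/R\lesssim S^{1-a}$, and for $\tau\ge0$
\[
S^{2a-\delta}(S/R)^{\tau}\lesssim S^{2a-\delta-(a-1)\tau}.
\]
Choosing $\delta<2a$ close to $2a$ and $\tau\ge0$ small enough that $2a-\delta-(a-1)\tau\ge0$ (e.g.\ $\tau=0$ or $\tau$ tiny relative to $2a-\delta$), this factor is bounded by $1$ since $S<A^{-1}$. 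Then the $\Psi^p(A^{-1})\le CA^{-\kappa}$ factor gives the result. If negative $\tau$ is wanted, use $S/R\ge c$ instead, so $(S/R)^\tau\le C$ and only $\delta\le 2a$ is needed.

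The main obstacle is making sure the $\Psi^p(A^{-1})$ in Corollary \ref{cor: Lag ang of interpolation} is genuinely uniform in $R,S$ in every region, in particular on $X_2$ near $r\sim|u|^a$. There Lemma \ref{lem: Lag ang of X_2} gives $u^{2a-2}=S^{2a}R^{-2}(R/S)^2$, and $(R/S)^2\lesssim S^{2(b-1)}$ is only small because $r\le 4u^b$. I would recheck this factor, and the analogous factors $(u^{-a}R)^{\mu}$ and $u^{2(b-1)}$ in Lemma \ref{lem: est G-varphi}, to confirm each is at most $A^{-\kappa}$ for a fixed $\kappa>0$. The sign constraints on $2a-\delta$ and $\tau$ can then absorb any leftover powers, which is why the proposition is stated only for $\delta$ close to $2a$ and $\tau$ close to $0$.
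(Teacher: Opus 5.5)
Your reduction to Corollary \ref{cor: Lag ang of interpolation}, and rewriting the goal as $S^{2a-\delta}(S/R)^{\tau}\Psi^p(A^{-1})\le CA^{-\kappa}$ uniformly over $\Omega_{R,S}$, matches how the paper's proof begins. The gap is that you close the argument only for $\delta\le 2a$, and that is the side that cannot be used. The statement is for all $\delta$ in a neighborhood of $2a$. In the proof of Theorem \ref{thm: main} it is applied with $\delta>2a$. This is forced: Proposition \ref{prop: weight is graphical}, i.e.\ the condition $r^{\tau}\rho^{\delta-\tau}\le CA^{-\kappa}r^2$ in Proposition \ref{prop: main pert}(3)(i), fails near the core $r\sim\rho^a$ of $X_2$ unless $\delta>2a+(a-1)(-\tau)\ge 2a$ for $\tau\le 0$. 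So with your choice $\delta<2a$, the perturbation scheme breaks down.

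For $\delta>2a$ the factor $S^{2a-\delta}$ is unbounded over $\Omega_{R,S}\subset\rho^{-1}(0,A^{-1})$ as $S\to 0$. A bound $\Psi^p(A^{-1})\le CA^{-\kappa}$ that is merely uniform in $R,S$ cannot compensate for this, and that uniform bound is exactly what your last paragraph proposes to check.

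The missing idea is that the $\Psi^p(A^{-1})$ in Corollary \ref{cor: Lag ang of interpolation} is in fact a fixed positive power of $S\sim\rho$ on each $\Omega_{R,S}$:
\begin{itemize}
\item On $X_1$, $S^{2a}R^{-2}\lesssim S^{2a-2b}$, because $R\gtrsim S^b$ there.
\item On $X_2$, the extra factor is $(R/S)^2\lesssim S^{2(b-1)}$, because $r\le 4|u|^b$ there.
\item In the interpolation region, the factors are $S^{(a-b)|\mu|}$ and $S^{2(b-1)}$ from Lemma \ref{lem: est G-varphi}, and $(R/S)^k\lesssim S^{(b-1)k}$ from the derivatives of $\varphi_a$.
\end{itemize}
Hence $\|\theta_X\|_{C^1_{R^{-2}g_X}(\Omega_{R,S})}\le CS^{2a+\kappa_0}R^{-2}$ for some $\kappa_0>0$ independent of $A$. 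For $2a\le\delta<2a+\kappa_0$ and $\tau\le 0$, using $R\lesssim S$, this gives
\[
S^{2a+\kappa_0-\delta}(S/R)^{\tau}\le CS^{2a+\kappa_0-\delta}\le CA^{-(2a+\kappa_0-\delta)}.
\]
For small $\tau>0$, your bound $S/R\lesssim S^{1-a}$ costs only $(a-1)\tau$ more in the exponent. This pointwise $\rho$-decay is what the paper's phrase ``absorb the small difference into the factor $A^{-\kappa}$ (possibly reducing $\kappa$)'' implicitly relies on. It is also the real reason the proposition asks for $\delta$ close to $2a$: the excess $\delta-2a>0$ has to fit inside the gain $\kappa_0$.
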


\begin{proof}
    Set $\delta_0=2a$ and $\tau_0 = 0$. By Corollary \ref{cor: Lag ang of interpolation}, there exist $C,\kappa > 0$ such that
    \begin{equation*}
        \|\theta_{X}\|_{C^{0,\beta}_{0,0}} \leq \|\theta_{X}\|_{C^1_{R^{-2}g_{X}}} \leq \Psi^p(A^{-1})\rho^{2a}r^{-2} \leq CA^{-\kappa}\rho^{\delta_0 - \tau_0}r^{\tau_0 - 2}.
    \end{equation*}
    Because $\delta$, $\tau$ are chosen close to $\delta_0$, $\tau_0$, we can absorb the small difference into the factor $A^{-\kappa}$ (possibly reducing $\kappa$) to obtain the desired estimate.
\end{proof}

\begin{proposition}\label{prop: weight is graphical}
    Suppose $\delta > 2a$ and $\tau$ is sufficiently close to $0$ (depending on $\delta$). Then for $A$ sufficiently large, there exist constants $C,\kappa > 0$ independent of $A$ such that in $X\cap\rho^{-1}(0, A^{-1})$,
    \[
        r^{\tau-2} \rho^{\delta-\tau}\leq CA^{-\kappa}.
    \]
\end{proposition}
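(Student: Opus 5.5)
The plan is to rewrite the weight as
\[
r^{\tau-2}\rho^{\delta-\tau} = \rho^{\delta-2}\left(\frac{r}{\rho}\right)^{\tau-2},
\]
and then bound the ratio $r/\rho$ from below by a power of $\rho$. Everything hinges on the fact that $X$ stays a definite distance from the singular axis $\R_u$. This is exactly the point where the hypothesis $0\notin L$, normalized to $\dist(0,L)\geq 1$, enters.

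\textbf{Step 1: a lower bound for $r$ on $X$.} I expect $r|_X\geq c|u|^a$ on $X\cap\rho^{-1}(0,A^{-1})$. In the region $r\leq |u|^b$ we have $X=X_2$, so Remark \ref{rmk: prop of X_2}(5) gives $r\geq |u|^a$ directly. In the regions $r\geq |u|^b$ (the interpolation region and $X_1$), I use that the graph is taken in the dilation-equivariant Lagrangian neighborhood $\cL_C\times\C$. The displacement of the $z$-coordinate is then $O(|\tilde\nabla\tilde I|)\,r$ or $O(|\tilde\nabla\tilde\varphi_a|)\,r$, which is small relative to $r$ by Remark \ref{rmk: I arb small} and Remark \ref{rmk: varphi arb small}. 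So $r|_X\sim r|_{C\times\R}\geq |u|^b\geq |u|^a$ there, for $A$ large. The $u$-coordinate is unchanged up to a small multiplicative error in all regions.

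\textbf{Step 2: comparing $r$ with $\rho$.} On $X$ we have $\rho\sim \max(r,|u|)$ for $A$ large, since $|v|$ is small: on $X_2$ it is $O(|u|^{2a-1})$, and on the graph regions it is $O(\rho^{2a-1})$. If $r\gtrsim |u|$, then $r\sim\rho$. Otherwise $|u|\sim\rho$, and Step 1 gives $r\gtrsim \rho^a$. In either case
\[
c\,\rho^{a}\leq r\leq \rho .
\]

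\textbf{Step 3: choosing $\tau$.} Take $\tau<2$, close to $0$. Since $\tau-2<0$, Step 2 gives $(r/\rho)^{\tau-2}\leq C\rho^{(a-1)(\tau-2)}$, and hence
\[
r^{\tau-2}\rho^{\delta-\tau}\leq C\rho^{\delta-2+(a-1)(\tau-2)} .
\]
At $\tau=0$ the exponent equals $\delta-2a>0$. Choosing $\tau$ close enough to $0$, depending on $\delta$, keeps the exponent at least some $\kappa>0$. Since $\rho<A^{-1}$, this yields $CA^{-\kappa}$.

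The main obstacle is Step 1 in the graphical regions. There one has to check that the dilation-equivariant graph really moves points by a factor $1+o(1)$ in $r$, so that $X$ does not approach the axis. I would verify this by working in the rescaled picture $R^{-1}\Omega_{R,S}$ and using the smallness of $|\tilde\nabla\tilde I|$ and $|\tilde\nabla\tilde\varphi_a|$.
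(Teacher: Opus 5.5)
Your proof is correct and follows the paper's argument: both rest on the lower bound $r\gtrsim\rho^{a}$ on $X$, which comes from $r|_{X_2}\geq |u|^{a}$ (the normalization $\dist(0,L)\geq 1$). Both then get the exponent $\delta-2a>0$ at $\tau=0$ and perturb $\tau$. Your Steps 1--2 and the explicit exponent $\delta-2a+(a-1)\tau$ spell out what the paper leaves implicit. Those steps check that the graphical regions stay away from the axis, and that $|v|\lesssim\max(r,|u|)$ so that $\rho\sim\max(r,|u|)$. The paper covers this only by citing the $X_2$ bound and by ``absorbing the small difference'' in $\tau$.
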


\begin{proof}
    Set $\tau_0 = 0$, then
    \[
        r^{\tau_0-2} \rho^{\delta-\tau_0}\leq C r^{-2}\rho^{\delta} \leq C \rho^{-2a+\delta} \leq CA^{-\kappa},
    \]
    where $\kappa = \delta-2a > 0$. In the second inequality, we used $r\gtrsim \rho^a$ on $X$, which is implied by \eqref{r>u^a}. For $\tau$ close enough to $\tau_0$, we can absorb the small difference into the factor $A^{-\kappa}$ (possibly reducing $\kappa$) to obtain the desired estimate.
\end{proof}

The following proposition compares the geometry of $X$ to that of two special Lagrangian model spaces. When far from the singular axis $\R_u$, $X$ is close $C\times\R$; when close to the singular axis $\R_u$ and $u\approx u_0$, $X$ is close $u_0^aL\times\R$. For simplicity, we denote $S_0 := u_0^aL\times\R$.

\begin{proposition}\label{prop: geometry of X}
    Given any $\epsilon > 0$, for $A$ sufficiently large, there exists $\Lambda>0$ such that the following holds. Define the sets
    \[\mathcal{U} =\{r > \Lambda |u|^a\},\quad \mathcal{V}_{u_0} =\{r < 2\Lambda |u|^a, ||u|-u_0|\leq u_0^a\}, \quad 0<u_0\ll 1.\]
    \begin{enumerate}
        \item In $\cU\cap\rho^{-1}(0,A^{-1})$, for all $k\leq K$,
        \[
            |\nabla^k(\cG^*g_{X} - g_{C\times \R})| \leq \epsilon r^{-k},
        \]
        where $\cG:C\times \R\rightarrow X$ is the nearest-point projection.
        \item In $\cV_{u_0}\cap\rho^{-1}(0,A^{-1})$, for all $k\leq K$,
        \[
            |\nabla^k( \cF^*g_{X} - g_{S_{0}})| \leq \epsilon r^{-k},
        \]
        where $\cF:S_{0}\rightarrow X$ is the nearest-point projection.
    \end{enumerate}
    Consequently, $X\cap\rho^{-1}(0,A^{-1})$ has $r^{-1}$-bounded $C^K$-geometry.
\end{proposition}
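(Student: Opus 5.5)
Both parts reduce to showing that $X$ is a graph over a model space $M$ ($C\times\R$ in $\cU$, $S_0$ in $\cV_{u_0}$) of an exact form $dF$ whose rescaled potential $\tilde F$ has small derivatives. Recall that $\tilde F = R^{-2}F$ is computed on $R^{-1}\Omega_{R,S}$. Since $M$ has $r^{-1}$-bounded geometry, Proposition \ref{prop: 2,2 can take graph} then gives the metric closeness. Remark \ref{rmk: small tilde F close geom} makes this precise: if $\sup_{R,S}\|\tilde F\|_{C^{K+2}}\le\Psi$ with $\Psi\to0$ as $A,\Lambda\to\infty$, then $|\nabla^k(\cF^*g_X-g_M)|\le\Psi' r^{-k}$. (The nearest-point projection in the Lagrangian neighborhood is comparable to the graph map, up to the same small errors.) So the plan is to identify $F$ region by region and bound $|\tilde\nabla^k\tilde F|$.

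\textbf{Part (1).} On $\cU\cap\rho^{-1}(0,A^{-1})$, $X$ is $\gr_{C\times\R}$ of $d\varphi_a$, $dI$, or $dG$, according to \eqref{eqn: def of X}. I treat the three subregions in turn.
\begin{itemize}
\item In $\{r\ge 2|u|^b\}$ we have $F=\varphi_a$. Remark \ref{rmk: varphi arb small} bounds $|\tilde\nabla^k\tilde\varphi_a|$ by $C_kA^{-(2a-2)}$ where $r\ge|u|$. Where $2|u|^b\le r\le|u|$, Lemma \ref{lem: est varphi_a} gives the bound $S^{2a}R^{-2}\lesssim S^{2a-2b}$, which is also small.
\item In the interpolation region, Corollary \ref{cor: est I} and Remark \ref{rmk: I arb small} apply. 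There is one point to check: the $k=0$ term of $\tilde I$ is not small. Only $k\ge1$ matters, since the graph depends on $dI$. I will phrase the smallness for $\tilde\nabla^k$ with $1\le k\le K+2$, and note that the $C^0$ norm of the potential can be renormalized away on each $\Omega_{R,S}$.
\item In $\{\Lambda|u|^a<r\le|u|^b\}$ we have $F=G$, and Lemma \ref{lem: est G} with Remark \ref{rmk: dG arb small} gives $|\tilde\nabla^k\tilde G|\le C_k(\Lambda^{\mu}+\Lambda^{-2}+\Psi^p(A^{-1}))$. More precisely, the second term is $u^{2a}R^{-2}(u^{-a}R)^\mu\le\Lambda^{-2+\mu}$, and the first term is $u^{2a}R^{-2}(u^{-1}R)^k$ with $k\ge1$.
\end{itemize}
Choosing $\Lambda$ large and then $A$ large makes all of these less than $\epsilon$.

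\textbf{Part (2).} Use the embedding description of Definition \ref{def: X_2 embed}. In $\cV_{u_0}$ we have $r<2\Lambda|u|^a\ll|u|^b$, so $X=X_2$. I will realize $X_2$ there as a graph over $S_0=u_0^aL\times\R$. Compare $\Phi(x,u)=(u^a\phi(x),u+iv)$ with the point $(u_0^a\phi(x),u)\in S_0$. Two differences appear:
\begin{itemize}
\item the scaling factor $u^a/u_0^a=1+O(u_0^{a-1})$, because $||u|-u_0|\le u_0^a$;
\item the vertical shift $v=-2au^{2a-1}\beta_L$, which is $O(u_0^{2a-1})$ by Corollary \ref{cor: asymp of beta_L}.
\end{itemize}
Rescaling by $R^{-1}$ with $R\sim r\gtrsim u_0^a$ (using \eqref{r>u^a}), the first difference is a Hamiltonian isotopy by dilation. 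By Remark \ref{rmk: prop of X_2}(4) it has potential $H$ with $|\tilde\nabla^k\tilde H|\lesssim u_0^{a-1}$, since the relative dilation is $O(u_0^{a-1})$ and $L$ has $r^{-1}$-bounded geometry. The second difference has size $u_0^{2a-1}/R\lesssim u_0^{a-1}$. Both go to $0$ as $u_0\to0$, which is guaranteed in $\rho^{-1}(0,A^{-1})$ for $A$ large.

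The cleanest way to organize this is the following. After rescaling by $u_0^{-a}$, the region $\cV_{u_0}$ becomes $\{\tilde r<2\Lambda\}\times\{|\tilde u - u_0^{1-a}|\le1\}$. On that set, $\tilde X_2$ is a smooth family, and it converges in $C^{K+2}$ to $L\times\R$ as $u_0\to0$, because every correction carries a factor $u_0^{a-1}$. The $C^{K+2}$ Lagrangian closeness then yields a small exact graph, by the Lagrangian neighborhood Theorem \ref{thm: Lag nbhd}. Exactness holds because both submanifolds are exact. On compact pieces ($\tilde r\le 2\Lambda$) the errors in $r^{-1}$-weighted norms are equivalent to unweighted ones, because $\dist(0,L)\ge1$.

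\textbf{Conclusion and main obstacle.} $r^{-1}$-bounded geometry follows because $\cU\cup\bigcup_{u_0}\cV_{u_0}$ covers $\rho^{-1}(0,A^{-1})\cap X$, by \eqref{r>u^a}. Both models have $r^{-1}$-bounded geometry, and closeness within $\epsilon r^{-k}$ transfers it. I expect the main obstacle to be the uniformity of part (2). The closeness must be uniform in $u_0$ and must hold in the $r^{-1}$-rescaled norms near the overlap $r\sim\Lambda|u|^a$ with $\cU$. The ordering of constants also needs care: $\Lambda$ is fixed by part (1), and then part (2) needs smallness on a set of rescaled size $2\Lambda$, which forces $A$ to be chosen large after $\Lambda$.
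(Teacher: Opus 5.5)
Your proposal is correct and follows essentially the paper's proof: part (1) uses the same derivative bounds for $\tilde\varphi_a$, $\tilde I$, $\tilde G$ over $C\times\R$ (with $\Lambda$ fixed before $A$), and part (2) rescales by $u_0^{-a}$ and compares $\Phi$ with $\Phi_{S_0}$ directly, every correction carrying a factor $u_0^{a-1}$, so your detour through an exact graph over $S_0$ is unnecessary but harmless. One small slip: the $k=0$ term of $\tilde I$ is in fact small, $|\tilde I|\lesssim S^{2a-2b}$ since $R\sim S^b$ in the interpolation region, though nothing in your argument depends on this.
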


\begin{proof}
    (1) In $\cU\cap\rho^{-1}(0,A^{-1})$,
    \[
        X = \begin{cases} \gr_{C\times\R}(d\varphi_a) & \{r\geq 2|u|^b\}\\
            \gr_{C\times\R}(dI) & \{|u|^b\leq r\leq 2|u|^b\}\\
            \gr_{C\times\R}(dG) & \{r\leq |u|^b\}
        \end{cases}
    \]
    By \eqref{eqn: der est of tilde G}, Remark \ref{rmk: varphi arb small}, and Remark \ref{rmk: I arb small}, each of the graphs defining $X$ is arbitrarily $C^k$-close to $C\times\R$ in the respective region, provided $A$ is chosen large enough (after choosing $\Lambda$ large enough). As explained in Remark \ref{rmk: small tilde F close geom}, Proposition \ref{prop: 2,2 can take graph} then implies that the induced metric on $X$ as a graph is arbitrarily close to the metric of $C\times\R$ in the stated sense.
    
    (2). In $\mathcal{V}_{u_0}$, $X=X_2$ is defined by the embedding $\Phi$ in Definition \ref{def: X_2 embed}. We may also embed $S_0$ using $\Phi_h$ for $h\equiv u_0^a$ in Definition \ref{def: Phi_h}, we denote this definition by $\Phi_{S_0}:L\times\R\rightarrow \C^{n+1}$, i.e.
    \[
        \Phi_{S_0}(x,u) = \left(u_0^a\phi(x),u\right),
    \]
    \[
        \Phi(x,u) = \left(u^a\phi(x),u - i2au^{2a-1}\beta_L(x)\right).
    \]
    We would like to rescaled $\Phi$ and $\Phi_{S_0}$ by $R$ in a locality where $r\sim R$ and show the rescaling have $\epsilon$-close $C^k$-geometry for all $k$. Using the fact that $u_0^{-a}\lesssim r^{-1}$ in $\{r < 2\Lambda |u|^a\}$, it suffices to prove the proximity of rescalings by $u_0^{-a}$. Rescale both $\Phi$ and $\Phi_{S_0}$ by $u_0^{-a}$ to $\tilde\Phi$ and $\tilde\Phi_{S_0}$ gives
    \[
        \tilde\Phi_{S_0}(x,u) = \left(\phi(x),u_0^{-a}u\right)
    \]
    \[
        \tilde\Phi(x,u) = \left(u^au_0^{-a}\phi(x),u_0^{-a}u - i2au^{2a-1}u_0^{-a}\beta_L(x)\right).
    \]
    Taking their difference gives
    \[
        \tilde \Phi(x,u) - \tilde \Phi_{S_0}(x,u) = \left(\left(u^au_0^{-a}-1\right)\phi(x), - i2au^{2a-1}u_0^{-a}\beta_L(x)\right).
    \]
    Using the fact that $||u|-u_0|\leq u_0^a$ implies $|u^a u_0^{-a}-1| \leq Cu_0^{a-1}$ and $|u|\sim u_0$,
    \begin{equation}
    \label{eqn: t.Phi close to t.Phi_S0}
        \left|\tilde\Phi(x,u) - \tilde\Phi_{S_0}(x,u))\right| \leq Cu_0^{a-1} \leq \epsilon,
    \end{equation}
    where the last inequality follows from $u_0 < A^{-1}$ for $A$ sufficiently large. Under rescaling by $u_0^{-a}$, the rescaled derivative is
    \[
        \tilde\nabla = \nabla_{u_0^{-a}u^ax} + \partial_{u_0^{-a}u}
    \]
    Using $|u|\sim u_0$ again implies $\nabla_{u_0^{-a}u^ax}$ is equivalent to $\nabla_x$. Taking $\nabla_x$ derivatives, we get estimate of the same type because both $\phi$ and $\beta$ are uniformly defined. Moreover, taking $\partial_{u_0^{-a}u} = u_0^{a}\partial_u$ derivative only improve the estimate, so \eqref{eqn: t.Phi close to t.Phi_S0} holds for all derivatives: for $A$ sufficiently large,
    \[
        \left|\tilde\nabla_x^k \left(\tilde\Phi(x,u) - \tilde\Phi_{S_0}(x,u)\right)\right| \leq \epsilon.
    \]
    for all $k\leq K$. Rescaling back by $u_0^a$, and using the fact that $u_0^{-a}\lesssim r^{-1}$ in $\{r < 2\Lambda |u|^a\}$,
    \[
        |\nabla^k( \cF^*g_{X} - g_{S_{0}})| \leq \epsilon u_0^{-ak} \leq \epsilon r^{-k}.
    \]
\end{proof}

\begin{corollary}
\label{cor: X has unique TC}
    $X$ is smooth in a neighborhood of $0$, except at $0$, where it has unique tangent cone $C\times \R$.
\end{corollary}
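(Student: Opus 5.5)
Smoothness away from $0$ and uniqueness of the tangent cone are handled separately.

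\textbf{Smoothness.} Smoothness away from the origin is essentially local. The three pieces in \eqref{eqn: def of X} are smooth where used.

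In $\{r\geq 2|u|^b\}$, $X=X_1=\gr_{C\times\R}(d\varphi_a)$. Here $\varphi_a$ is a polynomial in $(u,r^2)$, hence smooth on $(C\setminus\{0\})\times\R$. This region contains the points $\{u=0,r\neq 0\}$, and the graph is well defined by Remark \ref{rmk: varphi arb small}.

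In $\{r\leq |u|^b\}$, which lies in $\{u\neq 0\}$, $X=X_2$ is smooth by Remark \ref{rmk: prop of X_2}(2).

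In the interpolation region, $I$ is a smooth combination of $G$ and $\varphi_a$, and its graph is well defined by Remark \ref{rmk: I arb small}.

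On overlaps the definitions agree, since $\chi_2\equiv 1$ or $0$ there. Also, Definition \ref{def: X_2 embed} and Definition \ref{def: X_2 graph} agree on $\{\Lambda|u|^a<r\}$. The only remaining point of $\{r=0\}$ in the closure of $X$ is the origin, because $r|_{X}\geq |u|^a$ by \eqref{r>u^a}. So $X\setminus\{0\}$ is a smooth embedded submanifold of $\rho^{-1}(0,A^{-1})$.

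\textbf{Tangent cone.} For uniqueness of the tangent cone, I would show directly that $\rho^{-1}X\to C\times\R$ as $\rho\to 0$, with no need to pass to subsequences.

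Fix $\rho_i\searrow 0$ and consider $\rho_i^{-1}X$ on the annulus $\{1/2\leq\rho\leq 2\}$. Split it into the region $\{r>\Lambda|u|^a\}$ and its complement.

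On $\cU=\{r>\Lambda|u|^a\}$, $X$ is the graph of $dF$ over $C\times\R$, where $F\in\{\varphi_a,I,G\}$. The rescaled derivative estimates (Lemma \ref{lem: est varphi_a}, Lemma \ref{lem: est G}, Corollary \ref{cor: est I}) give $|\tilde\nabla\tilde F|\to 0$ as $\rho\to 0$ on the part where $r\geq\epsilon\rho$. Concretely, with $R\sim r\gtrsim\epsilon\rho$ and $S\sim\rho$, the bound $S^{2a}R^{-2}$ tends to $0$. The $G$ contribution is controlled by $u^{2a-1}R^{-1}\to0$ together with the $\Lambda^{-2}(u^{-a}R)^\mu$ term, which tends to $0$ since $u^{-a}R\to\infty$ when $R\gtrsim\epsilon\rho$. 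Hence $|dF|\leq o(1)\,r$, and after scaling by $\rho_i^{-1}$ the graph converges in $C^1$ to $C\times\R$ on $\{r\geq\epsilon\}$.

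On the complement, $\{r\leq\epsilon\rho\}$ lies within distance $\epsilon\rho$ of the axis $\R_u\subset C\times\R$. Rescaled, it lies in the $\epsilon$-neighborhood of the singular axis. This includes the region $\{r<\Lambda|u|^a\}$, where $X=X_2$. There the $v$-coordinate satisfies $|v|\leq C|u|^{2a-1}=o(\rho)$, and the $z$-part is $|u|^aL$, of size $O(|u|^a)$ near $0$. So this part is also $o(\rho)$-close to $C\times\R$.

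Letting $\epsilon\to0$ gives Hausdorff convergence of $\rho_i^{-1}X$ to $C\times\R$, with smooth convergence away from the axis. Since $C\times\R$ has multiplicity one, this convergence also holds as varifolds.

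\textbf{Main obstacle.} The main obstacle is uniform control near the axis on the region $\Lambda|u|^a\leq r\leq\epsilon\rho$. There I would rely on Proposition \ref{prop: geometry of X} and the weighted smallness in Remark \ref{rmk: dG arb small}. Using the monotonicity formula should exclude extra mass concentrating near the axis, because $X$ has area bounds comparable to $C\times\R$ from the graph description.
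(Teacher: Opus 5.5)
Your proof is correct and is essentially the paper's argument. Smoothness is checked piece by piece away from $\{u=0\}$, with $\varphi_a$ a polynomial on the $X_1$ piece. The tangent cone is identified by splitting $X$ into a small graph over $C\times\R$, controlled by the rescaled $\varphi_a$ estimate, and a part collapsing onto the axis. Your cut $\{r\geq\epsilon\rho\}$ plays the role of the paper's $\{r\geq 2|u|^b\}$; since $b>1$, for small $\rho$ the set $\{r\geq\epsilon\rho\}$ already lies in the $X_1$ region, so the $G$ and $I$ estimates you invoke there are unnecessary.

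The ``main obstacle'' you flag is harmless. First, $|v|=O(\rho^{2a-1})$ on all of $X$: it equals $\partial_u F$ on the graphical parts, with $|\partial_u G|,|\partial_u I|,|\partial_u\varphi_a|\lesssim\rho^{2a-1}$, and $-2au^{2a-1}\beta_L$ on $X_2$. Second, for multiplicity one, count area directly rather than invoking monotonicity, which for the non-minimal $X$ would need mean-curvature bounds. Near the axis the graphical part has area comparable to $C\times\R\cap\{r\leq\epsilon\rho\}$, and $X_2\cap\{r<\Lambda|u|^a\}$ has area $O(\rho^{an+1})$. The paper itself argues only at the level of Hausdorff collapse, so this goes beyond what it does.
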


\begin{proof}
    In the region $\{r\geq 2|u|^b\}$, $X=\gr_{C\times\R}(d\varphi_a)$. By Lemma \ref{lem: est varphi_a}, $|\tilde\nabla\tilde\varphi_a| \leq CS^{2a-b-1}\leq \Psi^p(A^{-1})$ is arbitrarily close to that of $C\times\R$ when $A$ is large, so the only tangent cone of $X\cap \{r\geq 2|u|^b\}$ is $(C\setminus\{0\})\times\R$. Moreover, $\varphi_a$ is a polynomial with non-negative integer order terms in $u$ and $r$, so $X$ is smooth. In $X\cap \{r\leq 2|u|^b\}$, any open set avoids $\{u=0\}$, so smoothness follows from smoothness of $\Phi$ and $I$ in $\{u\neq 0\}$.
    
    Since $b>1$, $X\cap \{r\geq 2|u|^b\}$ fills up $X$ when approaching the origin while $X\cap \{r\leq 2|u|^b\}$ collapse to the singular axis $\{0\}\times\R$, so the only tangent cone of $X$ is $(C\setminus\{0\})\times\R$ union the singular axis $\{0\}\times\R$, which is $C\times\R$.
\end{proof}

\subsection{Inverting the Laplacian}\label{subsec: invert Laplace}

The final ingredient required for the perturbation argument is the invertibility of the linearized operator $\Delta_X$. The argument is standard so we sketch the main steps. See Section 3.2.2-3.2.5 in \cite{szekelyhidi2021minimal} for more discussion and Section 5 and 6 in \cite{szekelyhidi2019degenerations} for full detail.

First, computing the Laplacian in polar coordinates and using the fact that eigenvalues of $\Delta_\Sigma$ are non-negative, one can conclude that there are no non-constant homogeneous harmonic functions on $C=\cone(\Sigma)$ with degrees in the interval $(2-n,0)$. The existence of this spectral gap implies invertibility of the Laplacian on the two model spaces.

\begin{proposition}
    For $\delta$ avoiding a discrete set of indicial roots, and $\tau\in(2-n,0)$, the Laplace operator
    \[
        \Delta_{C\times \R}: C^{k,\beta}_{\delta,\tau}(C\times \R) \rightarrow C^{k-2,\beta}_{\delta-2,\tau-2}(C\times \R)
    \]
    is invertible.
\end{proposition}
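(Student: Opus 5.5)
The plan is the standard approach for Laplacians on cones: reduce, by scaling and translation invariance, to a priori estimates plus a Liouville-type theorem for harmonic functions, and construct the inverse by exhaustion. Throughout I use that $C\times\R$ has $r^{-1}$-bounded geometry, so Schauder estimates on the rescaled pieces $R^{-1}\Omega_{R,S}$ hold with uniform constants. Combined with the weights, they give
\[
\|f\|_{C^{k,\beta}_{\delta,\tau}}\le C\bigl(\|\Delta f\|_{C^{k-2,\beta}_{\delta-2,\tau-2}}+\|f\|_{C^0_{\delta,\tau}}\bigr).
\]
It therefore suffices to prove the $C^0$ estimate $\|f\|_{C^0_{\delta,\tau}}\le C\|\Delta f\|_{C^{0,\beta}_{\delta-2,\tau-2}}$. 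Surjectivity then follows by solving Dirichlet problems on exhausting domains $\{\epsilon<\rho<\epsilon^{-1},\ r>\epsilon\rho\}$, applying the uniform estimate, and passing to a limit with Arzel\`a--Ascoli.

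I will prove the $C^0$ estimate by contradiction and blow-up. Suppose $f_i$ satisfy $\|f_i\|_{C^0_{\delta,\tau}}=1$ and $\|\Delta f_i\|\to 0$, and choose points $p_i$ where $r^{-\tau}\rho^{\tau-\delta}|f_i|\ge 1/2$. Since $C\times\R$ is invariant under dilations and under translations in $u$, I normalize in two ways, according to the ratio $r(p_i)/\rho(p_i)$.

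\emph{Case 1: $r(p_i)/\rho(p_i)$ stays bounded below.} Rescale so that $\rho(p_i)=1$. The limit is a nonzero harmonic function $f$ on $(C\setminus\{0\})\times\R$ with $|f|\le r^\tau\rho^{\delta-\tau}$. Separating variables in $\rho$ and the link of $C\times\R$, the bound $|f|\lesssim\rho^\delta$ forces $f$ to be a sum of homogeneous harmonic functions of degree exactly $\delta$. This is excluded because $\delta$ avoids the indicial roots.

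\emph{Case 2: $r(p_i)/\rho(p_i)\to 0$.} Translate in $u$ and rescale so that $r(p_i)=1$ and $u(p_i)=0$. The weight $\rho^{\delta-\tau}$ becomes asymptotically constant on compact sets, so the limit is a nonzero harmonic function on $C\times\R$ with $|f|\le C r^\tau$ and $\tau\in(2-n,0)$. Expanding $f$ in eigenfunctions $\phi_j$ of $\Delta_\Sigma$ and Fourier transforming in $u$, each mode satisfies a Bessel-type ODE in $r$. For nonzero frequency, the decaying solution decays exponentially while the other grows exponentially. For zero frequency, the solutions are $r^{d}$ with $d\in D_\Sigma$ or $d=2-n-d'$, where $d'\in D_\Sigma$. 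The bound $r^\tau$ must hold both as $r\to 0$ and as $r\to\infty$. By Lemma \ref{lem:spec gap}, no admissible exponent lies in $(2-n,0)$, and exponential behaviour is also excluded, so $f\equiv 0$. This is a contradiction.

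The main obstacle is Case 2: making the Liouville argument rigorous for a function that is only polynomially bounded, and so only tempered in $u$. I would first handle this by the Fourier method, or alternatively by an iteration in the style of \cite{szekelyhidi2019degenerations}. In that argument one averages over translations in $u$ and uses that $\partial_u^k f$ satisfies the same bounds, which gives $u$-independence. The problem then reduces to a harmonic function on $C$ bounded by $r^\tau$, and Lemma \ref{lem:spec gap} rules it out mode by mode.
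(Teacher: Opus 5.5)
\paragraph{Assessment.} Your a priori estimate is sound in outline. Schauder estimates on the rescaled pieces, then a blow-up split by whether $r/\rho$ stays bounded below, closed off by two Liouville theorems, is the standard argument the paper defers to (Sz\'ekelyhidi). Two small remarks on it:
\begin{enumerate}
\item In Case 1 the bound is not $|f|\lesssim\rho^\delta$ pointwise near the axis. You should work with the link coefficients $f_k(\rho)=\int f(\rho,\cdot)\psi_k$. These converge and satisfy the radial ODE because $\tau>2-n$ makes the boundary terms at the poles of the link vanish.
\item Case 2 needs no Fourier analysis. For $\tau'\in(2-n,\tau)$ the function $r^{\tau'}$ is superharmonic and $u^2-r^2/n$ is harmonic on $C\times\R$. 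Apply the maximum principle to $f-\epsilon r^{\tau'}-\epsilon'(u^2-r^2/n)$ on $\{\eta\le r\le M,\ |u|\le T\}$, with $\eta$ small and $T$ large. Letting $\epsilon'\to0$, $M\to\infty$, $\epsilon\to0$ gives $f\le 0$, and symmetrically $f\ge 0$.
\end{enumerate}

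\paragraph{The gap: surjectivity.} Your estimate is for functions on all of $C\times\R$. It does not apply to Dirichlet solutions $f_\epsilon$ on $D_\epsilon=\{\epsilon<\rho<\epsilon^{-1},\ r>\epsilon\rho\}$. Worse, the corresponding uniform estimate on $D_\epsilon$ is false whenever $\delta>0$, including $\delta$ slightly above $2a$, which is the range the paper needs.

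If you redo your blow-up with boundary, a point near the inner sphere $\rho=\epsilon$ can rescale to the limit $1-\rho^{1-n}$ on $\{\rho>1\}$. This function is harmonic on the $(n+1)$-dimensional cone $C\times\R$ and vanishes on $\rho=1$. It also satisfies $|1-\rho^{1-n}|\le 1\le r^\tau\rho^{\delta-\tau}$ there, because $\tau<0<\delta$. So the Liouville theorem fails for this limit configuration.

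Concretely, $f_\epsilon-f$ is harmonic with boundary values $-f$. For generic $g$ these have size $\epsilon^{-\delta}$ on $\rho=\epsilon^{-1}$, with a nonzero component along the (nearly) locally constant link mode. That component propagates inward essentially undamped. Hence $f_\epsilon$ is off by about $\epsilon^{-\delta}$ at $\rho\sim 1$, and $\|f_\epsilon\|_{C^0_{\delta,\tau}}\to\infty$. Zero boundary data cannot enforce the vanishing at the origin of the modes of degree $<\delta$ that the weight $\rho^\delta$ requires.

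\paragraph{A fix.} The existence step must stay on the whole space.
\begin{enumerate}
\item Cut $g$ off to a compact subset of $\{r>0\}$ and take the variational solution in $\dot H^1(C\times\R)$. This exists because Hardy's inequality holds in dimension $n+1\ge 4$.
\item This solution decays like $\rho^{1-n}$ at infinity and is bounded near the axis. Near the origin it expands as $\sum_k c_k\rho^{d_k^+}\psi_k$ with $d_k^+\ge 0$.
\item Subtract the finitely many globally harmonic terms $c_k\rho^{d_k^+}\psi_k$ with $d_k^+<\delta$. The result lies in $C^{2,\beta}_{\delta,\tau}$.
\item Your whole-space estimate now bounds it by $C\|g\|_{C^{0,\beta}_{\delta-2,\tau-2}}$, uniformly in the cutoff.
\item Exhausting $g$ and applying Arzel\`a--Ascoli yields surjectivity.
\end{enumerate}
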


\begin{proposition}
    For $\delta$ avoiding a discrete set of indicial roots, and $\tau\in(2-n,0)$, the Laplace operator
    \[
        \Delta_{L\times \R}: C^{k,\beta}_{\delta,\tau}(L\times \R) \rightarrow C^{k-2,\beta}_{\delta-2,\tau-2}(L\times \R)
    \]
    is invertible.
\end{proposition}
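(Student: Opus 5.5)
The plan is to reduce invertibility on $L\times\R$ to invertibility on the cross-section $L$, combined with the Fourier/partial-Fourier analysis in the $u$-variable, following the scheme of Sz\'ekelyhidi (Sections 5--6 of \cite{szekelyhidi2019degenerations}). First I will establish a priori Schauder estimates: on $L\times\R$ the weights $r^\tau\rho^{\delta-\tau}$ are comparable to constants on unit balls after the $r^{-1}$-rescaling, so the local Schauder estimate on each rescaled $R^{-1}\Omega_{R,S}$ gives
\[
    \|\nu\|_{C^{k,\beta}_{\delta,\tau}} \leq C\left(\|\Delta_{L\times\R}\nu\|_{C^{k-2,\beta}_{\delta-2,\tau-2}} + \|\nu\|_{C^0_{\delta,\tau}}\right).
\]
Note that $L\times\R$ has $r^{-1}$-bounded geometry only in the sense that near $r\lesssim 1$ it has genuinely bounded geometry (since $\dist(0,L)\geq1$), while for $r\gg1$ it is modeled on $C\times\R$.

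Next I will prove the key weighted $C^0$ estimate $\|\nu\|_{C^0_{\delta,\tau}}\leq C\|\Delta\nu\|_{C^{0,\beta}_{\delta-2,\tau-2}}$ by contradiction and blow-up. Assuming a normalized sequence $\nu_i$ with $\|\nu_i\|_{C^0_{\delta,\tau}}=1$ and $\|\Delta\nu_i\|\to0$, pick points $p_i$ where the weighted sup is nearly attained and rescale/translate around $p_i$. Depending on the relative sizes of $r(p_i)$, $\rho(p_i)$ and the scale of $L$, the limit is a nonzero harmonic function on one of the models: (a) $L\times\R$ itself (after translation in $u$, with growth controlled by $r^\tau\rho^{\delta-\tau}$), (b) $C\times\R$ with weight $r^\tau\rho^{\delta-\tau}$, or (c) $C\times\R^{}$ or $\R^{n+1}$-type pieces when $r(p_i)\to\infty$ relative to the neck. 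Case (b) is excluded by the preceding proposition for $C\times\R$. For case (a), I take the Fourier transform in $u$ (or argue by translation-invariance and Liouville in $u$, using polynomial growth in $u$ of degree $\delta-\tau$ not hitting an indicial root) to reduce to harmonic/eigenfunction equations $\Delta_L f=\xi^2 f$ on $L$ with decay $r^\tau$, $\tau\in(2-n,0)$ at infinity. Since $\tau<0$, $f\to0$ at every end and the maximum principle (for $\xi^2\geq0$) forces $f\equiv0$; polynomial-in-$u$ parts reduce similarly by induction using the spectral gap Lemma \ref{lem:spec gap} to control asymptotics at the ends of $L$.

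Finally, surjectivity: I would construct solutions on exhausting bounded domains (Dirichlet problems on $\{\rho<R_j\}$), use the uniform estimate above to get bounds independent of $j$, and pass to a limit; alternatively, use that the formal adjoint on the dual weights $(2-n-\tau, \ldots)$ is injective by the same argument since $2-n-\tau\in(2-n,0)$ as well. Injectivity is exactly the $C^0$ estimate. The main obstacle I expect is the blow-up case analysis, in particular ruling out limits on $L\times\R$ that grow polynomially in $u$: I would first try to handle it by differentiating in $u$ enough times to reach a bounded, $u$-independent harmonic function decaying like $r^\tau$ on $L$, kill it by the maximum principle, then integrate back using that $\delta$ avoids the indicial roots so no polynomial-in-$u$ harmonic extension with the allowed growth survives.
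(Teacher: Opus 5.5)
There is a genuine gap in case (a). With the double weight as you set it up, it cannot be closed: for the relevant $\delta$, the operator $\Delta_{L\times\R}$ on $C^{k,\beta}_{\delta,\tau}(L\times\R)$ (weight $r^\tau\rho^{\delta-\tau}$, with $\rho$ measured from the origin) has a nontrivial kernel.

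Indeed, since $r\le\rho$ and $\tau<0$,
\[
  r^{\tau}\rho^{\delta-\tau}=\rho^{\delta}\,(r/\rho)^{\tau}\ \geq\ \rho^{\delta}.
\]
Moreover $\rho\geq r\geq 1$ on $L\times\R$ (because $\dist(0,L)\geq 1$). So every harmonic function of growth $O(\rho^\delta)$, with the corresponding rescaled derivative bounds, lies in the space. This gives explicit kernel elements:
\begin{itemize}
\item for $\delta\geq 0$: the constants, and the bounded harmonic functions on $L$ with different limits at the $M\geq 2$ ends;
\item for $\delta\geq 1$: the restrictions of $u,x_j,y_j$, which are harmonic because $L\times\R$ is minimal;
\item for $\delta\geq 2$: functions such as $u^2-|z|^2/n$.
\end{itemize}
In the application $\delta\approx 2a>2$, so all of these are present.

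Your reduction to $\Delta_L f=\xi^2 f$ with decay $r^\tau$ at the ends misreads the weight. The factor $r^\tau$ only produces decay where $r\ll\rho$. For fixed $u$ and $r\to\infty$ one has $\rho\sim r$, so the weight is $\sim r^{\delta}$, which is growth.

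Likewise, choosing $\delta$ off the indicial roots controls behaviour at exactly the rate $\delta$; that is what gives Fredholm-type estimates. It does not exclude harmonic functions of strictly lower growth. So your ``differentiate in $u$, kill by the maximum principle, integrate back'' induction cannot close: differentiating $u$ once gives $1$, which does not decay. On $C\times\R$ the double weight does give injectivity, because it is imposed both as $\rho\to 0$ and as $\rho\to\infty$. On $L\times\R$ only the end $\rho\to\infty$ exists.

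What is true is the single-weighted statement, and it is also what the patching argument actually needs. On $\cV_{u_0}$ one has $\rho\sim u_0$, so the double weight there is just $u_0^{\delta-\tau}r^{\tau}$; compare Sz\'ekelyhidi's $H\times\R$ model, to which the paper defers. Precisely: with norm $\|\nu\|=\sup_R R^{-\tau}\|\nu\|_{C^{k,\beta}_{R^{-2}g}(\{r\in(R,2R)\})}$ and $\tau\in(2-n,0)$, the operator $\Delta_{L\times\R}$ is invertible, and $\delta$ plays no role.

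For that version your outline does go through:
\begin{itemize}
\item In case (a), a harmonic $f$ with $|f|\le Cr^{\tau}$ is bounded in $u$ and tends to $0$ uniformly at every end. It therefore vanishes, either by the maximum principle or by Fourier transform in $u$. For the maximum principle, use a barrier $\psi(x)\cosh(\sqrt{\lambda_1}\,u)$, where $\psi$ is a positive first Dirichlet eigenfunction with eigenvalue $\lambda_1$ on a large compact piece of $L$.
\item Blow-ups with $r(p_i)\to\infty$ give harmonic functions on $C\times\R$ bounded by $Cr^\tau$. These vanish by separation of variables on $\Sigma$ and the spectral gap: the $r^{2-n}$ branch is excluded by $\tau>2-n$, and the $r^{0}$ branch by $\tau<0$.
\item Surjectivity then follows by exhaustion.
\end{itemize}
You should restate the model proposition with this weight, rather than try to prove injectivity for the double weight.
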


Next, using the fact that $X$ is arbitrarily close to either $C\times\R$ or $S_{0}= u_0^aL\times\R$ in the respective regions (Proposition \ref{prop: geometry of X}), we can patch together the inverses on model spaces to construct an approximate inverse $P'$ for $\Delta_X$ in the sense that its composition with $\Delta_X$ minus the identity is a contraction. This step is similar to the procedure of constructing an approximate solution $X$. Lastly, using the approximate inverse $P'$, a contraction mapping argument shows the existence of an actual right inverse $P$. This step is similar to the proof of Proposition \ref{prop: main pert}.

\begin{proposition}\label{prop: invert Lap X}
    Let $\tau \in (2-n,0)$, and suppose that $\delta$ avoids a discrete set of indicial roots. Then for sufficiently large $A>0$, the Laplace operator
    \begin{equation*}
        \Delta_{X}: C^{2,\beta}_{\delta,\tau}(X\cap \{\rho\in(0,A^{-1})\}) \rightarrow C^{0,\beta}_{\delta-2,\tau-2}(X\cap \{\rho\in(0,A^{-1})\})
    \end{equation*}
    is surjective, with a right inverse $P$ bounded independently of $A$.
\end{proposition}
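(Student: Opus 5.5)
The plan is the standard parametrix-plus-iteration argument, following Sections 5--6 of \cite{szekelyhidi2019degenerations} and Section 3.2 of \cite{szekelyhidi2021minimal}.

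\textbf{Step 1: partition of unity.} Using Proposition \ref{prop: geometry of X}, I cover $X\cap\rho^{-1}(0,A^{-1})$ by two kinds of pieces:
\begin{itemize}
\item the region $\cU$, where $X$ is $\epsilon$-close to $C\times\R$ in the $r^{-1}$-scaled $C^K$ sense;
\item a family of slabs $\cV_{u_0}$, with $u_0$ ranging over a sequence such as $u_{j+1}=u_j-u_j^a$, where $X$ is $\epsilon$-close to $S_0=u_0^aL\times\R$.
\end{itemize}
On this cover I choose cutoffs $\chi_\cU$ and $\chi_{u_0}$ forming a partition of unity. Each cutoff should have $|\nabla^k\chi|\le C_k\eta\, r^{-k}$ with $\eta$ small. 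For the transition in $r$ between $\Lambda|u|^a$ and $2\Lambda|u|^a$, I use logarithmic cutoffs over a long annulus $\{\Lambda|u|^a\le r\le\Lambda'|u|^a\}$. Smallness in the $u$-direction comes from $u_0^a$-scale slabs: in the rescaled metric the $\partial_u$-derivatives of the cutoffs carry factors $r/u_0^a\lesssim\Lambda$ that are bounded. The transition region in $u$ then has to be handled by a second logarithmic gluing in the $\R$-factor, which is where the $\rho$-weight matters.

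\textbf{Step 2: the approximate inverse.} I define
\[
P'f=\chi'_\cU\,P_{C\times\R}(\chi_\cU f)+\sum_{u_0}\chi'_{u_0}\,P_{S_0}(\chi_{u_0}f),
\]
where $\chi'$ are slightly enlarged cutoffs equal to $1$ on the supports of the $\chi$, and the model inverses come from the two previous propositions. Transplanting a function from $X$ to the models uses the nearest-point projections $\cG$ and $\cF$. On $S_0=u_0^aL\times\R$ the weights $r^{\tau}\rho^{\delta-\tau}$ are comparable to $r^\tau u_0^{\delta-\tau}$, since $\rho\sim u_0$ on the slab. So the weighted norms on $X$ and on the model agree up to constants independent of $u_0$ and $A$. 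The invertibility of $\Delta_{L\times\R}$ is scale invariant, so $\|P_{S_0}\|$ is uniform in $u_0$.

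\textbf{Step 3: the error estimate.} I then show $\|\Delta_XP'-\mathrm{Id}\|\le\frac12$ on $C^{0,\beta}_{\delta-2,\tau-2}$. The error has two sources.
\begin{itemize}
\item Metric discrepancy. The term $(\Delta_X-\Delta_{\text{model}})$ acting on $P_{\text{model}}$ is bounded by $C\epsilon$, using Proposition \ref{prop: geometry of X} and rescaling on each $\Omega_{R,S}$ as in Proposition \ref{prop: bdd op & Lip est}.
\item Commutators. These are the terms $[\Delta,\chi']P(\cdot)$, which involve $\nabla\chi'\cdot\nabla P+\Delta\chi'\,P$. They are bounded by $C\eta$ times the norm, because the weighted norms of $\nabla P f$ gain exactly $r^{-1}$.
\end{itemize}
The sum over $u_0$ has bounded overlap, so it is harmless.

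\textbf{Step 4: the actual inverse.} I set $P=P'(\Delta_XP')^{-1}$, with $(\Delta_XP')^{-1}$ given by a Neumann series. Its bound is $2\|P'\|$, which is independent of $A$.

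\textbf{Main obstacle.} The hard part is making the commutator terms small. Plain cutoffs produce $O(1)$ errors relative to the weights. The fix is to use log-scale cutoffs supported on a long transition region, in $r/|u|^a$ from $\Lambda$ to $\Lambda^{N}$. Such cutoffs have gradient $O(1/(N\log\Lambda))\,r^{-1}$, and the hypothesis $\tau\in(2-n,0)$ ensures that the model inverses have no decaying or growing homogeneous modes in that range that would spoil this. Proposition \ref{prop: geometry of X} must then be applied with $\Lambda$ replaced by $\Lambda^N$, which requires taking $A$ larger.
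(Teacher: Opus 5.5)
Your scheme matches the one the paper sketches. You transplant and patch the inverses on $C\times\R$ and on $S_0=u_0^aL\times\R$ using the closeness from Proposition \ref{prop: geometry of X}, show $\|\Delta_XP'-\mathrm{Id}\|\le\frac12$, and invert by a Neumann series. The logarithmic cutoffs in $r/|u|^a$ are a legitimate way to make the radial commutators small. They only use boundedness of the model inverses in the $(\delta,\tau)$ weights, so your remark about homogeneous modes is not needed there.

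The step that does not work as written is the decomposition in the $u$-direction.
\begin{itemize}
\item With slabs of width $u_0^a$, the slab cutoffs have rescaled gradient of size $r\,u_0^{-a}$. On the inner region $\{r\lesssim\Lambda^N|u|^a\}$ this is as large as $\Lambda^N$: bounded, but large rather than small.
\item So the commutators $[\Delta_X,\chi'_{u_0}]P_{S_0}(\chi_{u_0}f)$ are of order $\Lambda^N$ relative to the target weight. The $O(\eta)$ commutator bound in your Step 3 fails for them, and the Neumann series cannot start.
\item The ``second logarithmic gluing in the $\R$-factor'' is never specified. The $\rho$-weight cannot help either, since $\rho\sim u_0$ is essentially constant across a slab.
\end{itemize}

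The fix is to widen the slabs. Take $u_{j+1}=u_j-Du_j^a$, slabs $\{||u|-u_j|\le 2Du_j^a\}$, and cutoffs varying on the scale $Du_j^a$, with $D\gg\Lambda^N$ fixed before $A$.
\begin{itemize}
\item The overlap stays bounded independently of $D$.
\item The rescaled gradients of the slab cutoffs become $\lesssim\Lambda^N/D$, which is small.
\item Proposition \ref{prop: geometry of X}(2) still holds on these wider slabs. Its proof only uses that $|u^au_0^{-a}-1|$ and $u_0^{a-1}$ are small, and across such a slab $|u^au_j^{-a}-1|=O(Du_j^{a-1})$, which is small once $A$ is large. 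The position error relative to scale $r$ is then $O(D\Lambda^N u_j^{a-1})$, with $k$-th derivatives picking up at most a factor $\Lambda^{Nk}$; all of this is harmless after $\Lambda,N,D$ are fixed.
\end{itemize}
With this change your argument goes through.
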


Finally, all the ingredients are complete, and we can prove the main theorem.

\begin{proof}[Proof of the main theorem \ref{thm: main}]
    We only need to verify that $X$ satisfies all conditions in Proposition \ref{prop: main pert}. First, Corollary \ref{cor: X has unique TC} guarantees that $X\cap \rho^{-1}(0,A^{-1})$ has unique tangent cone $C\times\R$ and is smooth away from the origin. Second, Proposition \ref{prop: geometry of X} shows that $X\cap \rho^{-1}(0,A^{-1})$ has $r^{-1}$-bounded $C^K$-geometry. Lastly, suppose
    \begin{itemize}
        \item $\delta>2a$ is sufficiently close to $2a$ and avoids a discrete set of indicial roots, and
        \item $\tau\in (2-n,0)$ is sufficiently close to $0$.
    \end{itemize}
    Then $\delta,\tau$ satisfies the conditions for Proposition \ref{prop: weight est Lag ang}, \ref{prop: weight is graphical}, and \ref{prop: invert Lap X}. Such $\delta,\tau$ clearly exist, so condition (3) on $X$ is satisfied. This finishes the proof.
\end{proof}

\begin{appendix}

\section{Disconnected examples: union of generic special Lagrangians with linearly intersecting tangent planes}\label{app: disconn exm}

Consider the degenerate case where $C=\cone(\Sigma) = P$ is a plane, as in \ref{subsec: harmonic graph}, we may still define a harmonic polynomial
\[
    \varphi_{a} = u^{2a} + \sum_{j=1}^{a} a_{j}u^{2a-2j}r^{2j}
\]
Since a plane as a cone is not singular at the vertex, the graph $X_1 = \gr_{P\times\R}\varphi_a$ is a globally defined smooth graph over the special Lagrangian plane $P\times\R$ in $\C^{n+1}$ with tangent plane $P\times\R$ at the origin. Lemma \ref{lem: est varphi_a} now applies to the globally-defined graph $X_1$, so the weighted estimate of the Lagrangian angle is also satisfied globally. Hence $X_1 = \gr_{P\times\R}\varphi_a$ fits into the conditions of Proposition \ref{prop: main pert}. Consequently, the perturbation scheme produces a smooth special Lagrangian graph $Y$ in a neighborhood of the origin with tangent plane $P\times\R$ at the origin.

If we start with multiple planes, $C = \bigcup_m P_m$, where $P_m$ are transverse intersecting special Lagrangian planes in $C^m$, then the above procedure produces a union $Y = \bigcup_m Y_m$ where $Y_m$ has tangent plane $P_m\times\R$ at the origin. Such $Y$ is a special Lagrangian submanifold in a neighborhood of the origin in $\C^{m+1}$ with unique cylindrical tangent cone $C\times\R = \bigcup_m P_m\times\R$, a union of special Lagrangian planes intersecting along a line. Unlike the special Lagrangian produced by the main theorem \ref{thm: main}, $Y\setminus\{0\}$ is not connected with disjoint components $Y_m\setminus \{0\}$.

\end{appendix}

\bibliographystyle{alpha}

\bibliography{final.bbl}

\end{document}